\theoremstyle{theorem}
\newtheorem{TheoremA}{Theorem}
\newtheorem{theorem}{Theorem}[section]
\newtheorem{lemma}[theorem]{Lemma}
\newtheorem{proposition}[theorem]{Proposition}
\newtheorem{corollary}[theorem]{Corollary}
\newtheorem{question}[theorem]{Question}
\newtheorem*{question*}{Question}
\theoremstyle{definition}
\newtheorem*{definition*}{Definition}
\newtheorem{definition}[theorem]{Definition}
\newtheorem*{example*}{Example}
\newtheorem{example}[theorem]{Example}
\newtheorem*{observation*}{Observation}
\newtheorem*{Goal*}{Goal}
\newtheorem*{Assumption*}{Assumption}
\theoremstyle{remark}
\newtheorem*{remark*}{Remark}
\newtheorem{remark}[theorem]{Remark}
\numberwithin{equation}{section}
\newcommand{\ow}{\omega}
\newcommand{\lda}{\lambda}
\newcommand{\p}{\partial}
\newcommand{\C}{{\mathbb{C}}}
\newcommand{\R}{{\mathbb{R}}}
\newcommand{\Q}{{\mathbb{Q}}}
\newcommand{\Z}{{\mathbb{Z}}}
\newcommand{\N}{{\mathbb{N}}}
\newcommand{\F}{{\mathbb{F}}}
\DeclareMathOperator{\id}{id}
\DeclareMathOperator{\rk}{rk}
\DeclareMathOperator{\Spec}{Spec}
\DeclareMathOperator{\CZ}{CZ}
\DeclareMathOperator{\RS}{RS}
\DeclareMathOperator{\SH}{SH}
\DeclareMathOperator{\HF}{HF}
\DeclareMathOperator{\CF}{CF}
\DeclareMathOperator{\Ho}{H}
\DeclareMathOperator{\st}{st}
\DeclareMathOperator{\GH}{GH}
\begin{document}

\title{On dynamically convex contact manifolds and filtered symplectic homology}
%\title{Filtered symplectic homology for dynamically convex periodic Reeb flows}

\author{Myeonggi Kwon}
\address{Department of Mathematics Education, and Institute of Pure and Applied Mathematics, Jeonbuk National University, Jeonju 54896, Republic of Korea}
\email{mkwon@jbnu.ac.kr}

\author{Takahiro Oba}
\address{Department of Mathematics, Osaka University, Toyonaka, Osaka 560-0043, Japan}
\email{taka.oba@math.sci.osaka-u.ac.jp}

\subjclass[2020]{53D10, 53D12, 53D40.}

%\address{Osaka University}
%\email{???}

\begin{abstract}
In this paper we are interested in characterizing the standard contact sphere in terms of dynamically convex contact manifolds which admit a Liouville filling with vanishing symplectic homology. We first observe that if the filling is flexible, then those contact manifolds are contactomorphic to the standard contact sphere. We then investigate quantitative geometry of those contact manifolds focusing on similarities with the standard contact sphere in filtered symplectic homology.

%We then exhibit quantitative geometry of those contact manifolds investigaing similarities with the standard contact sphere in filtered symplectic homology.

%We then further investigate similarities on quantitative geometry of those contact manifolds with the standard contact sphere in terms of filtered symplectic homology. 
\end{abstract}

\maketitle

%\tableofcontents

\section{Introduction}

Let $(W, \lda)$ be a Liouville domain, meaning that $(W, d\lda)$ is an exact symplectic manifold with boundary and the Liouville vector field $X$, determined by $\iota_{X} d\lda  = \lda$, is pointing outward along the boundary $\p W$. The Liouville form $\lda$ induces a contact form $\alpha := \lda|_{\p W}$ on $\p W$. In this case, we say $(W, \lda)$ is a \emph{Liouville filling} of $(\p W, \xi)$. Throughout the paper, we will assume that $(W, \lda)$ is topologically simple, i.e. the inclusion $\p W \rightarrow W$ is $\pi_1$-injective and $c_1(TW) = 0$; see Remark \ref{rem: globalassumption}. 
%A Liouville domain $(W, \lda)$ is called \emph{topologically simple} if the inclusion $\p W \rightarrow W$ is $\pi_1$-injective and $c_1(TW) = 0$; see Remark \ref{rem: globalassumption}.
%Throughout the paper, we assume that the inclusion $\p W \rightarrow W$ is $\pi_1$-injective and $c_1(TW) = 0$; see Remark \ref{rem: globalassumption}. 
A typical example is the \emph{standard symplectic ball} $
B^{2n} : = \{\mathbf{x}+ i\mathbf{y} \in \C^n \;|\; ||\mathbf{x}||^2 + ||\mathbf{y}||^2 \leq 1\}$
with the Liouville form $
\lda_{\st} : = \frac{1}{2}(\mathbf{x} \text{d} \mathbf{y} - \mathbf{y} \text{d} \mathbf{x})$. The Liouville vector field here is given by the radial field $\frac{1}{2}(\mathbf{x}\p_{\mathbf{x}} + \mathbf{y}\p_{\mathbf{y}})$, and the boundary $\p B^{2n} = S^{2n-1}$ with the induced contact structure $\xi_{\st} : = \ker \alpha_{\st}$, where $\alpha_{\st} = \lda_{\st}|_{S^{2n-1}}$, is called the \emph{standard contact sphere}. 

%The contact form defines a vector field $R$, called the \emph{Reeb vector field}, by $\iota_R \alpha =1$ and $\iota_{R}d \alpha = 0$, and orbits of the associated flow is called the \emph{Reeb orbits}.

The standard contact sphere $(S^{2n-1}, \xi_{\st})$ has two special properties which are of our main interest in this paper.
%In this paper, we are interested in characterizing properties of the standard contact sphere $(S^{2n-1}, \xi_{\st})$. 
First of all, it admits a \emph{dynamically convex} contact form $\alpha_{\st}$. Roughly speaking, this means that the periodic orbits of the associated Reeb dynamics satisfy a certain Conley--Zehnder index constraint, which can be seen as a symplectic counterpart of geometric convexity as in \cite{HWZ}. See Definition \ref{def: DC}. The other notable property of $(S^{2n-1}, \xi_{\st})$ is about the symplectic topology of fillings: The symplectic homology, which is a powerful symplectic invariant, of any Liouville filling vanishes by Seidel--Smith \cite[Corollary 6.5]{Sei08}. This for example includes the case of the standard symplectic ball. See Section \ref{sec: SH} for a brief description on symplectic homology and \cite{BO_MB} for more details.

%it admits a symplectically uncomplicated Liouville filling, namely, the symplectic homology of the standard symplectic ball is vanishing. See Section \ref{sec: SH} for a brief description on symplectic homology, following \cite{BO_MB}.

Those properties seem significantly restrict symplectic and contact topology of Liouville domains and its contact boundary. To the authors knowledge, no other examples of fillable contact manifolds with the two properties are known, while there are examples of dynamically convex contact manifolds (in an appropriate sense) with fillings whose symplectic homology vanishes; e.g. see \cite{AbMa17}. Moreover, in \cite[Theorem 1.1]{Zh20}, it is proved in high dimensions that once a dynamically convex, even in a weak sense, contact manifold  admits a Liouville filling with vanishing symplectic homology, then every Liouville filling has vanishing symplectic homology.

The main question of this paper is the following.
\begin{question*}
How strongly do the dynamical convexity and the fillability characterize contact topological and dynamical aspects of the standard contact sphere?
    %Let $(\Sigma,\xi)$ be a dynamically convex contact manifold with a Liouville filling with vanishing symplectic homology. 
    %Is $(\Sigma,\xi)$ contactomorphic to the standard sphere? 
%{\color{red}How strongly do the dynamical convexity and the fillability characterize contact topological and dynamical aspects of the standard contact sphere?}
\end{question*}

We remark that a related question was raised by Niederkrüger in the problem list \cite[Section 1.5]{AIM_Nie}. Our first result shows that, under a stronger fillability of contact manifolds, dynamical convexity of contact boundary indeed determines the contact structure:

\begin{TheoremA}\label{thm: result1}
    Let $(\Sigma,\xi)$ be a dynamically convex contact manifold of dimension $\geq 5$ with a flexible Weinstein filling $W$. If $\Sigma$ is simply-connected, then $(\Sigma, \xi)$ is contactomorphic to the standard contact sphere $(S^{2n-1}, \xi_{\st})$. Moreover the filling $W$ is Stein homotopy equivalent to the standard ball $B^{2n}$.
\end{TheoremA}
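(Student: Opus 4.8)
The plan is to convert the two hypotheses — flexibility of the filling and dynamical convexity of the boundary — into a single homological statement, namely that $W$ is \emph{contractible}, and then to recognize $W$ and $\p W$ by purely topological/flexible‑Weinstein means. First I would record that flexibility forces $\SH(W)=0$: a flexible Weinstein domain is built from $B^{2n}$ by attaching subcritical handles and critical handles along \emph{loose} Legendrian spheres, and neither operation disturbs the vanishing of symplectic homology (Cieliebak's invariance under subcritical surgery; the Bourgeois--Ekholm--Eliashberg surgery exact triangle together with the acyclicity of the Legendrian contact homology of a loose Legendrian, for critical handles), while $\SH(B^{2n})=0$ is classical (it also follows from \cite[Corollary 6.5]{Sei08}). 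In particular the data $(\Sigma,\xi,W)$ already satisfies the standing hypothesis of \cite[Theorem 1.1]{Zh20}, whose proof contains most of what is needed in the next step.

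\emph{Step 1 (the heart): $W$ is contractible.} Since $\Sigma$ is simply connected, every Reeb orbit of any defining contact form is contractible, so a dynamically convex contact form has \emph{all} of its Reeb orbits of Conley--Zehnder index at least the minimum realized on the standard sphere (namely $\ge n+1$ in the usual normalization). I would work with the positive $S^1$-equivariant symplectic homology $\SH^{+,S^1}_{*}(W)$, whose generators are the good Reeb orbits placed in degrees controlled by their Conley--Zehnder indices: the index bound then forces $\SH^{+,S^1}_{*}(W)$ to vanish in precisely the range of degrees where $\SH^{+,S^1}_{*}(S^{2n-1},\xi_{\st})$ vanishes, i.e. below $n+1$ (the positive equivariant symplectic homology of $(S^{2n-1},\xi_{\st})$ being supported in degrees $n+1,n+3,n+5,\dots$, carried by the iterates of the Hopf fibre). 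On the other hand $\SH(W)=0$ implies $\SH^{S^1}(W)=0$ via the Gysin exact sequence (iterating $\SH^{S^1}_{*}(W)\cong\SH^{S^1}_{*-2}(W)$ downward from the bottom degree), so the equivariant Viterbo long exact sequence collapses to an isomorphism $\SH^{+,S^1}_{*}(W)\cong\bigoplus_{k\ge 0}H^{\,n+1-*+2k}(W;\Z)$, normalized so as to reproduce the model computation for $B^{2n}$. Combining the two inputs gives $\bigoplus_{k\ge0}H^{\,j+2k}(W;\Z)=0$ for every $j\ge1$, hence $H^{>0}(W;\Z)=0$. Finally, as $\dim W=2n\ge 6$, a Weinstein domain is obtained from $\p W\times[0,1]$ by attaching handles of index $\ge n\ge 3$, so $\pi_1(W)\cong\pi_1(\Sigma)=1$; together with $H^{>0}(W)=0$ this shows $W$ is contractible.

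\emph{Step 2 (topology and flexibility).} By Lefschetz duality $\Sigma=\p W$ is an integral homology $(2n-1)$-sphere, hence — being simply connected of dimension $\ge 5$ — a homotopy sphere. Deleting an open ball $B\subset\Int W$, the cobordism $W\setminus\Int B$ from $S^{2n-1}=\p B$ to $\Sigma$ has both boundary inclusions homology, hence homotopy, equivalences, so it is an $h$-cobordism; by the $h$-cobordism theorem it is a product, whence $\Sigma$ is diffeomorphic to the standard $S^{2n-1}$ and $W$ to the standard $D^{2n}$. Now $W$ carries a flexible Weinstein structure on a domain diffeomorphic to $D^{2n}$, and since $W$ is contractible with $c_1(TW)=0$ its underlying almost-Weinstein (formal) data is homotopic to that of $(B^{2n},\lda_{\st})$; by Cieliebak--Eliashberg's uniqueness theorem for flexible Weinstein structures, $W$ is Weinstein homotopic — hence Stein homotopic — to $B^{2n}$. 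In particular the contact structure induced on $\p W=\Sigma$ is the one on $\p B^{2n}$, i.e. $(\Sigma,\xi)$ is contactomorphic to $(S^{2n-1},\xi_{\st})$.

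The step I expect to demand the most care is Step 1: setting up the $\Z$-graded positive equivariant symplectic homology (equivalently linearized contact homology) of the pair $(\Sigma,W)$, checking that the definition of dynamical convexity in use yields exactly the stated degree bound, and running the equivariant Viterbo sequence in the presence of $\SH^{S^1}(W)=0$ with the correct degree normalization; much of this can be borrowed from the argument behind \cite[Theorem 1.1]{Zh20}. Steps 2 and the recognition of the contact boundary are then formal, resting on the $h$-cobordism theorem and on the flexibility principle of Cieliebak--Eliashberg.
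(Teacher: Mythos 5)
Your proposal is correct and follows essentially the same route as the paper: flexibility gives $\SH(W)=0$, dynamical convexity plus the (equivariant) tautological/Gysin sequences force the vanishing of positive symplectic homology in degrees below $n+1$ and hence that $W$ is a homology ball, the Weinstein handle structure gives $\pi_1(W)=1$, the $h$-cobordism theorem identifies $W$ with $B^{2n}$ and $\Sigma$ with $S^{2n-1}$, and Cieliebak--Eliashberg's uniqueness of flexible Stein structures together with Gray stability yields the Stein homotopy and the contactomorphism with $(S^{2n-1},\xi_{\st})$. The only cosmetic difference is that you run the degree argument through positive $S^1$-equivariant symplectic homology and $H^{>0}(W;\Z)$, whereas the paper uses the non-equivariant positive symplectic homology and $\Ho_*(W,\p W)$ over fields followed by universal coefficients; these are interchangeable.
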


The main observation for the proof is that the dynamical convexity and the vanishing of symplectic homology force the domain to be a homology ball in high dimensions. Together with the simply-connectedness condition, the uniqueness of flexible Stein structures in \cite[Theorem 15.14]{CiEl12} tells us that the domain is in fact Liouville homotopy equivalent to $B^{2n}$ and hence the contact boundary is the standard contact sphere.

\begin{remark*}
In contrast, Theorem \ref{thm: result1} does not hold with a weaker version of dynamical convexity of contact manifolds. In \cite[Corollary 1.6]{Laz20}, it is proved that there are infinitely many dynamically convex, in the weaker sense, contact spheres which are flexibly fillable. See Remark \ref{rem: whyc1xi2}.
%The notion of dynamical convexity of contact manifolds in \cite{Laz20} is weaker than the one in this paper. With respect to the weaker version, it is proved in \cite[Corollary 1.6]{Laz20} that there are infinitely many (asymptotically) dynamically convex contact spheres which are flexibly fillable. See Remark \ref{rem: whyc1xi}.}
\end{remark*}

%Since the result \cite{HWZ}, the notion of dynamical convexity has been intensively studied for the multiplicity problem in Reeb dynamics e.g. in \cite{AbMa17Ann, AbMa17, GiGu20, GiMa21} and references therein. 

%The above result shows that, combined with the vanishing of symplectic homology, dynamical convexity can characterize the contact topology of the standard contact sphere. 

In the second part of the paper, we pay further attention to a dynamical aspect of the standard contact form $\alpha_{\st}$ on $S^{2n-1}$, namely, its Reeb flow is \emph{periodic}. It can directly be shown that the quantitative geometry from the periodicity of the Reeb flow on $S^{2n-1}$ is encoded in the filtered symplectic homology $\SH^a(B^{2n})$ of the standard ball in the following way: For each integer $m \geq 0$,
$$
\SH^{\epsilon + m T_P}(B^{2n}) \cong \Ho(B^{2n}, \p B^{2n})
$$
where $T_P$ is the period of the Reeb flow 
%of the standard contact form $\alpha_{\st}$ on $S^{2n-1}$ 
and $\epsilon>0$ is sufficiently small. Here the symplectic homology is filtered by the natural action filtration; see Section \ref{sec: actionfilt}. 
The following result shows that the periodic pattern in filtered symplectic homology from a periodic Reeb flow can still be observed for dynamically convex Liouville domains with vanishing symplectic homology.
%The following result shows that the periodicity of Reeb flows can still be observed in filtered symplectic homology for more general dynamically convex Liouville domains. 
Below, homology groups are over $\Z_2$.

\begin{TheoremA}\label{thm: result4}
 Let $(W, \lda)$ be a topologically simple Liouville domain with vanishing symplectic homology. Suppose that the natural contact form $\alpha = \lda|_{\p W}$ on the boundary yields a periodic Reeb flow of period $T_P$ and is dynamically convex. Then 
 $$
 \SH^{\epsilon + m T_P}(W) \cong \SH^{\epsilon + (m+2) T_P}(W)
 $$
for each integer $m \geq 0$. In particular, for $m \geq 0$ even, we further have 
$$
 \SH^{\epsilon + m T_P}(W) \cong \Ho(W, \p W).
$$

%    $$
%    \begin{cases}
 %           \SH^{\epsilon + m T_P}(W) = \Ho(W, \p W) & \text{for $m \geq 0$ even}, \\
%            \SH^{\epsilon + m T_P}(W) \text{ is identical} & \text{for $m \geq 0$ odd.}
 %   \end{cases}
 %   $$       
 If, in addition, $(\Sigma, \alpha)$ is Boothby--Wang, then
        $$
         \SH^{\epsilon + m T_P}(W) \cong \Ho(W, \p W) \cong \Ho(B^{2n}, \p B^{2n})
        $$
       for each integer $m \geq 0$, up to rescaling the radius of the ball $B^{2n}$.        
\end{TheoremA}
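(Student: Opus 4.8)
\emph{Proof proposal.} The plan is to push everything through the tautological action-filtration exact triangle and to exploit that a $T_P$-periodic Reeb flow organizes the Floer data period by period, with dynamical convexity controlling parities and the vanishing of $\SH$ closing up the resulting complexes.

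First I would reduce the problem. For $0<a<b$ there is an exact triangle $\SH^{a}(W)\to\SH^{b}(W)\to\SH^{(a,b]}(W)\to\SH^{a}(W)[-1]$, where $\SH^{(a,b]}$ denotes the homology of the action-window quotient complex. Taking $a=\eps$ and $b=\eps+mT_P$, and using that $\SH^{\eps}(W)\cong\Ho(W,\p W)$ because below the minimal Reeb period only constant orbits contribute, the task becomes to compute the truncated positive homology $\SH^{(\eps,\eps+mT_P]}(W)$ together with its connecting map to $\Ho(W,\p W)$. Two facts feed in simultaneously. On one hand, $\SH(W)=0$ makes $\SH^{+}(W)$ a single degree-shifted copy of $\Ho(W,\p W)$ via Viterbo's sequence and, exactly by the mechanism behind Theorem~\ref{thm: result1}, forces $W$ to be a $\Z_2$-homology ball, so $\Ho(W,\p W)\cong\Ho(B^{2n},\p B^{2n})$ is $\Z_2$ concentrated in one degree. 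On the other hand, $T_P$-periodicity makes the $1$-periodic orbits of a Hamiltonian linear of slope $\eps+mT_P$ at infinity fall into finitely many Morse--Bott families, one supported at each action level $\ell T_P$ with $1\le\ell\le m$, plus, in the non--Boothby--Wang case, families supported at the rescaled levels $T_P/q$ coming from orbits whose minimal period properly divides $T_P$.

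This yields a Morse--Bott spectral sequence computing $\SH^{(\eps,\eps+mT_P]}(W)$ whose $E_1$-page is built from the $\Z_2$-homology of the orbit manifolds, placed with integer degree shifts $\tau_\ell$ (integrality because $c_1(TW)=0$). The Conley--Zehnder bound of dynamical convexity, applied to iterates, forces $\tau_{\ell+1}-\tau_\ell$ to be odd, so levels with $\ell$ of the same parity occupy degrees of a single parity and the assembled complex is ``$2T_P$-periodic'' up to this parity bookkeeping. Feeding in $\SH(W)=0$---which says the full assembled complex must be acyclic away from the Morse part---pins down the leading differentials and shows that enlarging the truncation from $\eps+mT_P$ to $\eps+(m+2)T_P$ only adjoins an acyclic block; hence the filtration maps $\SH^{\eps+mT_P}(W)\to\SH^{\eps+(m+2)T_P}(W)$ are isomorphisms. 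The groups therefore depend only on $m\bmod 2$: at $m=0$ one reads off $\SH^{\eps}(W)=\Ho(W,\p W)$, the even case, and at $m=1$ one obtains a fixed group, the odd case.

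For the Boothby--Wang refinement, write $\Sigma$ as the total space of a prequantization circle bundle $\pi\colon\Sigma\to B$. Then every simple Reeb orbit has period exactly $T_P$, there are no short-orbit families, the level-$\ell$ Morse--Bott manifold is $\Sigma$ for all $\ell\ge1$, and the shifts $\tau_\ell$ form an arithmetic progression. Identifying the leading inter-level differential, via the Gysin sequence of $\pi$, with cap product against the Euler class $e(\Sigma)$, and using again that $W$ is a $\Z_2$-homology ball with $\SH(W)=0$, one sees that the Gysin sequence forces $\Ho_*(B;\Z_2)\cong\Ho_*(\CP^{n-1};\Z_2)$ with $e(\Sigma)$ a generator in each relevant degree---precisely the situation of $S^{2n-1}\to\CP^{n-1}$. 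Thus the level-by-level complex coincides, up to the overall shift $\tau_1$ and a rescaling of $T_P$, with the one computing $\SH^{\eps+mT_P}(B^{2n})$, which equals $\Ho(B^{2n},\p B^{2n})$ for every $m\ge0$; in particular the fixed odd-$m$ group also equals $\Ho(W,\p W)\cong\Ho(B^{2n},\p B^{2n})$, giving the stated isomorphism for all $m\ge0$. The main obstacle throughout is this cancellation statement inside the Morse--Bott spectral sequence---that the block adjoined between actions $\eps+mT_P$ and $\eps+(m+2)T_P$ is acyclic and the differentials are exactly those forced by the index-parity count. It requires a Morse--Bott model for filtered $\SH$ robust enough to handle general periodic Reeb flows, including the orbifold strata coming from short orbits, together with an honest identification of the leading differential with a topological operation (cap product with $e(\Sigma)$ in the Boothby--Wang case) so that $\SH(W)=0$ can be invoked to upgrade the a priori merely filtered, upper-triangular differential to an isomorphism on the new generators. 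Dynamical convexity enters twice: to exclude bad orbits, so the Morse--Bott complexes are well defined, and to supply, through its Conley--Zehnder bound, the parity that makes the periodicity two-step rather than one-step.
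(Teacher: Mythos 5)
The central step of your argument is incorrect. You claim that the block of action between $\eps + mT_P$ and $\eps + (m+2)T_P$ is acyclic and hence the filtration (inclusion) maps $\SH^{\eps+mT_P}(W)\to\SH^{\eps+(m+2)T_P}(W)$ are isomorphisms. In fact the paper proves the opposite: from $c^{\SH}(W)\le T_P$ (Lemma~\ref{lem: cSHforperflow}, using periodicity and dynamical convexity) and the uniform instability of Ginzburg--Shon (Proposition~\ref{prop: unif_instability}, which uses the unital \emph{ring} structure of $\SH$ via the pair-of-pants product---a tool absent from your write-up), the single-step inclusion $\SH^{\eps+mT_P}(W)\to\SH^{\eps+(m+1)T_P}(W)$ already vanishes for every $m\ge 0$. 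Consequently the two-step map you want to be an isomorphism is a composition of two zero maps, hence zero, while $\SH^{\eps+mT_P}(W)\cong\Ho(W,\p W)\ne 0$; moreover the window $\SH^{(\eps+mT_P,\,\eps+(m+2)T_P)}(W)$ then has rank $2\rk\Ho(W,\p W)\ne 0$, so the adjoined block is not acyclic. The rank equality $\rk\SH^{\eps+mT_P}(W)=\rk\SH^{\eps+(m+2)T_P}(W)$ that one actually needs is obtained by a different route: the vanishing of the one-step inclusion splits each one-period window as $\SH^{(\eps+mT_P,\,\eps+(m+1)T_P)}(W)\cong\SH^{\eps+mT_P}(W)\oplus\SH^{\eps+(m+1)T_P}(W)$, while the Seidel-type isomorphism $\SH^{(a,b)}(W)\cong\SH^{(a+T_P,\,b+T_P)}(W)$ for periodic index-positive Reeb flows (Proposition~\ref{lem: isomviaSeidelmap}, from \cite{Ue19}) makes these windows one-step periodic; equating ranks yields the two-step periodicity in $m$.

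A related misconception: dynamical convexity does not make $\tau_{\ell+1}-\tau_\ell$ odd. In the Boothby--Wang case the paper derives $\RS(\Sigma_{T_P})=2n$, so the Morse--Bott degree shift between consecutive action levels is even, not odd; the two-step nature of the conclusion is not a parity-of-index phenomenon but a consequence of the splitting-plus-Seidel argument above. Your Boothby--Wang outline via the Gysin sequence and the Euler class is plausible as an alternative to the paper's direct degree-count in the Morse--Bott spectral sequence of \cite{KvK} (which derives $\Ho_*(W,\p W)\cong\Ho_*(B^{2n},\p B^{2n})$, $\Ho_*(\p W)\cong\Ho_*(S^{2n-1})$, and $\RS(\Sigma_{T_P})=2n$ purely from $\SH(W)=0$ and dynamical convexity), but without the vanishing of the one-step inclusion and the action-window periodicity, the bulk of the proof is missing.
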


Above, we say a contact form is \emph{Boothby--Wang} if every Reeb orbit is periodic with the same minimal period; it is also referred to as a \emph{Zoll} contact form. 

It is worth noting that the above result is substantially motivated by a conjecture in Bowden--Crowley \cite{BoCr21}: \emph{Any compactly supported symplectomorphism on a flexible Weinstein domain is isotopic/homotopic to the identity}. In view of Floer theory for symplectomorphisms in \cite{Ul17}, Theorem \ref{thm: result4} serves as a positive clue for the conjecture with a certain class of symplectomorphisms called \emph{fibered twists}. See Section \ref{sec: conjBC} for details on this context.

As for the proof, we observe that the chain complex of symplectic homology is generated by periodic Reeb orbits, and their period defines a canonical action filtration on the complex. If the Reeb flow is periodic, then generators of the chain complex obviously appear in a periodic pattern along the action filtration. The dynamical convexity of the Reeb flow and the vanishing of symplectic homology allow us to compute the differential of the chain complex algebraically, and it turns out that the periodic pattern on the complex continues to hold at the homology level. Practically, we utilize Morse--Bott techniques in symplectic homology for periodic Reeb flows.

In the last part of the paper, we show conversely that filtered symplectic homology can sometimes characterize the periodicity of the Reeb flow on the contact boundary of Liouville domains in terms of symplectic capacities. In \cite{GH18}, a sequence of symplectic capacities $c_k^{\GH}$ with $k \in \N$, called \emph{Gutt--Hutchings capacities}, is defined via filtered positive equivariant symplectic homology for Liouville domains. 
%It conjecturally agrees with Ekeland--Hofer capacities \cite{EHcapacity} on convex domain in $\R^{2n}$ \cite[Conjecture 1.9]{GH18}. 
%In [GGM], Ginzburg--G\"urel--Machucchelli gives a characterization of periodic Reeb flows on convex domains in $\R^{2n}$ using Ekeland--Hofer capacities. 
The following theorem provides a sufficient and a necessary condition for dynamically convex Liouville domains with vanishing symplectic homology to admit a periodic or a Boothby--Wang Reeb flow in terms of the Gutt--Hutchings capacities. 

%This partially generalizes the result in \cite[Theorem 1.2]{GiGuMa21} which might be of independent interest; see also \cite[Question 1.1]{GiGuMa21}.

%The following result can be seen as a symplectic homology version of their result with partial generalizations, which might be of independent interest.

\begin{TheoremA} \label{thm: result3}
Let $(W, \lda)$ be a topologically simple Liouville domain $W$ with vanishing symplectic homology.
    \begin{enumerate}
        \item If the contact boundary $(\Sigma, \alpha)$ is Boothby--Wang of common period $T_P$ and is dynamically convex, then $c_1^{\GH}(W) = c_n^{\GH}(W) = T_P$.
        \item Suppose that $(\Sigma, \alpha)$ is Morse--Bott and dynamically convex. If $c_1^{\GH}(W) = c_n^{\GH}(W)$, then $(\Sigma, \alpha)$ is periodic. If, in addition, $W$ is a convex domain in $\R^{2n}$ containing the origin, then $(\Sigma, \alpha)$ is Boothby--Wang.
    \end{enumerate}
\end{TheoremA}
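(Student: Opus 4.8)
The plan is to reduce the whole statement to a Morse--Bott computation of the action-filtered positive $S^1$-equivariant symplectic homology $\SH^{S^1,+}_\ast$, together with the description of the Gutt--Hutchings capacities as its spectral invariants. The input common to both parts is that $\SH(W)=0$ forces $\SH^{S^1}(W)=0$, so the tautological exact sequence identifies $\SH^{S^1,+}_\ast(W)$ with $H^{S^1}_{\ast+n-1}(W,\partial W)$ (up to the standard degree shift); since dynamical convexity and $\SH(W)=0$ force $W$ to be a $\Z_2$-homology ball (as already used for Theorem~\ref{thm: result1}), this group is the same as $\SH^{S^1,+}_\ast(B^{2n})$, namely $\Z_2$ in each degree $n+1,n+3,n+5,\dots$ with its $U$-module structure; write $e_1,e_2,\dots$ for the canonical generators, $\deg e_k=n-1+2k$. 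I will then use that $c^{\GH}_k(W)$, in our $\SH=0$ setting, is the spectral number of $e_k$, i.e. the smallest action $a$ at which $e_k$ lies in the image of $\SH^{S^1,+,a}_\ast(W)\to\SH^{S^1,+}_\ast(W)$, and that this value lies in the discrete (Morse--Bott) action spectrum of $\alpha$.

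For part (1): the Boothby--Wang hypothesis says the action spectrum is $\{mT_P\}_{m\ge 1}$ and that the Morse--Bott manifold of closed Reeb orbits of action $mT_P$ is all of $\Sigma$, with orbit space $Q:=\Sigma/S^1$ a closed symplectic $(2n-2)$-manifold. Since $W$ is a $\Z_2$-homology ball, $\Sigma$ is a $\Z_2$-homology sphere, and the Gysin sequence of $\Sigma\to Q$ forces $H^\ast(Q;\Z_2)\cong H^\ast(\CP^{n-1};\Z_2)$. A small Morse--Bott perturbation then makes the $m$-th action level contribute exactly $n$ generators, lying in a window of $n$ degrees of constant parity and width $2(n-1)$; the Robbin--Salamon index of the $m$-th iterated family equals $2mn$ --- the lower bound $\ge 2mn$ being dynamical convexity, the upper bound coming from $\SH^{S^1,+}_{n+1}(W)\ne 0$ --- so, using the iteration formula, the $m$-th window is $\{(2m-1)n+1,(2m-1)n+3,\dots,(2m+1)n-1\}$. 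These windows partition $\{n+1,n+3,\dots\}$, hence the Morse--Bott differential vanishes for parity reasons and $\SH^{S^1,+}_\ast(W)$ equals the chain complex; reading off at which action $e_k$ appears gives $c^{\GH}_k(W)=\lceil k/n\rceil\,T_P$, in particular $c^{\GH}_1(W)=c^{\GH}_n(W)=T_P$.

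For part (2): assume $c^{\GH}_1(W)=c^{\GH}_n(W)=:T$; monotonicity $c^{\GH}_1\le\cdots\le c^{\GH}_n$ upgrades this to $c^{\GH}_1=\cdots=c^{\GH}_n=T$. Dynamical convexity makes the Morse--Bott complex supported in degrees $\ge n+1$, so $e_1,\dots,e_n$ (degrees $n+1,\dots,3n-1$) all already survive on the action-$\le T$ subcomplex, $T$ is the smallest action for which this holds, and nothing survives in degree $<n+1$ nor --- since otherwise $c^{\GH}_{n+1}=T$ and, iterating, $c^{\GH}_k=T$ for all $k$, contradicting $c^{\GH}_k(W)\ge c^{\GH}_k(B^{2n}(r))\to\infty$ for a small ball $B^{2n}(r)\subset W$ --- in degree $>3n-1$. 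A dimension count (using dynamical convexity, and that odd-degree generators cancel in pairs) then shows that $\bigoplus_N H_\ast(N/S^1;\Z_2)$, the sum over the finitely many Morse--Bott families $N$ of closed orbits of action $\le T$, regraded, is one-dimensional in each of the $n$ degrees $n+1,n+3,\dots,3n-1$ and zero elsewhere. This already makes the dynamics at the minimal action level as rigid as for the ball; to promote it to periodicity of the whole flow I would invoke (or reprove in this setting) a spectral characterization of Besse Reeb flows: in the dynamically convex, $\SH=0$ regime, the persistence of the ball's $\SH^{S^1,+}$ in every degree together with $c^{\GH}_1=\cdots=c^{\GH}_n$ forces the mean Robbin--Salamon indices of all closed orbits into a rigid arithmetic pattern, incompatible with an open set of non-closed orbits, so every Reeb orbit is closed. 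Finally, for the convex case $W\subset\R^{2n}$: once the flow is periodic, Wadsley's theorem presents it as a locally free $S^1$-action, i.e. a Seifert fibration $\Sigma\to\Sigma/S^1$; Ekeland's index theory for closed characteristics on convex bodies, together with $c^{\GH}_1=c_{\mathrm{EHZ}}=$ minimal action, gives that the minimal characteristic has Conley--Zehnder index exactly $n+1$ and that there are no characteristics of action $<T$, which rules out short orbits over the singular locus; hence the action is free, $\alpha$ is Boothby--Wang of period $T$, and comparison with part (1) identifies $(\Sigma,\alpha)$ with the boundary of a rescaled ball.

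The step I expect to be the main obstacle is the spectral characterization of Besse flows used in part (2): converting the numerical equality $c^{\GH}_1=\cdots=c^{\GH}_n$ into the absence of non-closed Reeb orbits is not formal, and requires quantitative control of how the Robbin--Salamon indices of iterates of closed orbits and the Morse--Bott spectral sequences at successive action levels are forced to reproduce $\SH^{S^1,+}(B^{2n})$ degree by degree --- in the spirit of Ginzburg--Gürel--Mazzucchelli's work on Besse and Zoll Reeb flows --- and one must be careful about Morse--Bott transversality and the precise Robbin--Salamon iteration inequalities (in particular, ruling out several low-dimensional families conspiring to produce the required $n$ generators at action $T$). The convex case is more robust, since there the classical variational theory of closed characteristics (Ekeland's estimates, the identity $c^{\GH}_1=c_{\mathrm{EHZ}}$, and the common index jump phenomenon) supplies the missing index control directly on $\partial W$.
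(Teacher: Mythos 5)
The decisive step of part (2) is missing. The paper's proof rests on a Morse--Bott refinement of the Reeb orbit property (Proposition \ref{prop: MBReebprop}): if $c_k^{\GH}(W)<\infty$, then $c_k^{\GH}(W)=T\in\Spec(\p W,\alpha)$ with the index constraint $\RS(\Sigma_T)\in\bigl[2k+n-1-\tfrac12(\dim\Sigma_T-1),\,2k+n-1+\tfrac12(\dim\Sigma_T-1)\bigr]$, proved via $T$-nondegenerate approximations and the $C^2$-continuity of the index from \cite[Lemma 4.10]{Mc16}. Applying this simultaneously for $k=1$ and $k=n$ at the common value $T=c_1^{\GH}(W)=c_n^{\GH}(W)$ gives $3n-1-\tfrac12(\dim\Sigma_T-1)\le \RS(\Sigma_T)\le n+1+\tfrac12(\dim\Sigma_T-1)$, hence $\dim\Sigma_T\ge 2n-1$ and $\Sigma_T=\Sigma$, i.e.\ the flow is periodic. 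You never state or prove this index-constrained spectrality (you only use the weaker fact that the capacities lie in the action spectrum), and at exactly the point where it is needed you appeal to an unspecified ``spectral characterization of Besse flows'' which you admit you would have to ``invoke (or reprove)''. That deferred step is precisely the nontrivial content of part (2), so as written it is not proved: your dimension count of $\bigoplus_N \Ho_*(N/S^1)$ at action $\le T$ does not by itself exclude, say, $n$ isolated nondegenerate orbits of the same action $T$ carrying the generators in degrees $n+1,n+3,\dots,3n-1$, and you supply no mechanism forcing a Morse--Bott family of dimension $2n-1$. (Note also that the statement already assumes $(\Sigma,\alpha)$ Morse--Bott, so the issue is not ruling out wild non-closed behaviour but showing the $T$-periodic set is all of $\Sigma$.) For the final convex step you assert $c_1^{\GH}=c_{\mathrm{EHZ}}=$ minimal action; the paper instead obtains $c_1^{\GH}(W)=c^{\SH}(W)$ from Lemma \ref{lem: eqnoneqcapacitysame} (using dynamical convexity and the Gysin sequence) and then quotes the systole results of \cite{AbKa22, Irie}, so if you keep your route you must justify that identity rather than lean on the conjectural equivalence with Ekeland--Hofer capacities.

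For part (1) your route is genuinely different from the paper's and looks workable in outline: you recompute the filtered $\SH^{S^1,+}$ of $W$ by an equivariant Morse--Bott spectral sequence (index windows of width $2(n-1)$, parity vanishing of differentials, $\RS(\Sigma_{mT_P})=2mn$ pinned between dynamical convexity and nonvanishing in degree $n+1$) and read off $c_k^{\GH}(W)=\lceil k/n\rceil T_P$. The paper avoids this computation entirely: it applies Proposition \ref{prop: MBReebprop} at $k=1$ and $k=n$, together with $\RS(\Sigma_{mT_P})=m\,\RS(\Sigma_{T_P})\ge 2mn$ (Corollary \ref{cor: RSlowerbound} and homogeneity of the mean index), to force $\RS(\Sigma_{T_1})=\RS(\Sigma_{T_n})=2n$ and hence $T_1=T_n=T_P$. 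Your version gives more (all $c_k$), but needs the identification of $c_k^{\GH}$ with spectral numbers of the $U$-tower when $\SH=0$ and should be run over $\Q$ (the capacities are defined with $\Q$-coefficients), both fixable; the substantive gap is the one in part (2).
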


The above theorem can partially be seen as a symplectic homological version of the characterization result in \cite{GiGuMa21}, which might be of independent interest. In particular, an alternative proof may be given by proving the conjecture in \cite[Conjecture 1.9]{GH18} on the equivalence of the Gutt--Hutchings capacities and the Ekeland--Hofer capacities for convex domains in $\R^{2n}$. Another closely related characterization result for geometrically convex contact spheres with a periodic Reeb flow using the Ekeland--Hofer capacities can be found in \cite{MaRa20}.

%An Ekeland--Hofer capacities version of Theorem \ref{thm: result3} is given in \cite{GiGuMa21}, and the above result can be seen as its partial generalization, which might be of independent interest. An alternative proof may be given by proving the conjecture in \cite[Conjecture 1.9]{GH18} on the equivalence of the Gutt--Hutchings capacities and the Ekeland--Hofer capacities.  

An essential ingredient for the proof of Theorem \ref{thm: result3} is a Reeb orbit property in Proposition \ref{prop: MBReebprop} of the capacity $c_k^{\GH}(W)$ for Morse--Bott Liouville domains, which is particularly useful for computational purposes. It is a generalization of the one given in \cite[Theorem 1.1]{GH18} and allows us to determine $c_k^{\GH}(W)$ by looking at periodic Reeb orbits of a certain Conley--Zehnder index. 
%when one knows enough information on Conley--Zehnder indices of periodic Reeb orbits. 
In our case, the dynamical convexity serves as a strong constraint on the behavior of Conley--Zehnder indices under a standard perturbation  in \cite{Bo02} for Morse--Bott contact forms. As a result, we can get enough information on the Conley--Zehnder index of periodic Reeb orbits and their period to determine the capacity $c_k^{\GH}(W)$ by applying the Reeb orbit property.

\begin{remark*}
 Dynamical convexity and its consequences on the behavior of Conley--Zehnder indices of periodic Reeb orbits have provided fruitful applications on the multiplicity problem in Reeb dynamics. We refer the reader e.g. to \cite{AbMa17Ann, AbMa17, GiGu20, GiMa21} and references therein.
%\item Recently examples of dynamically convex contact spheres which are not geometrically convex are found in [Gutt at el]  and [Invention].
\end{remark*}

%In this paper Liouville domains $(W, \lda)$ are assumed to be \emph{topologically simple}, meaning that $c_1(TW) = 0$ and the natural map $\pi_1(\p W) \rightarrow \pi_1(W)$ is injective. 
%A contact manifold $(\Sigma, \xi)$ is called \emph{dynamically convex} if there is a nondegenerate contact form $\alpha$ for $\xi$ such that for any contractible periodic Reeb orbit $\gamma$, we have $\CZ(\gamma) \geq n+1$. 

%In the case of dynamically convex and Boothby--Wang, we can actually say more:

%\begin{proposition}\label{prop: the same ci intro}
%Let $(W, \lda)$ be a Liouville domain $W$ with vanishing symplectic homology. Suppose that the boundary $(\p W, \alpha)$ is Boothby--Wang and dynamically convex. Then
%$$
%c_i(W) = c_i(B^{2n})
%$$
%for all $i \in \N$ up to rescaling the radius of the ball $B^{2n}$. 
%\end{proposition}

\subsection*{Organization of the paper} In Section \ref{sec: SH}, we provide with preliminaries on symplectic homology, focusing on a canonical action filtration on the chain complex and symplectic capacities from filtered (equivariant) symplectic homology. In Section \ref{sec: DC}, we discuss the notion of dynamical convexity of contact manifolds and its consequence for Morse--Bott type Reeb flows in Corollary \ref{cor: RSlowerbound} and filtered positive (equivariant) symplectic homology in Proposition \ref{prop: vanishingeSHlowdeg}. With those preliminaries, the proof of Theorem \ref{thm: result1} is given in Section \ref{sec: ThmA}. Theorem \ref{thm: result4} is proved in Section \ref{sec: ThmB}, and its relationship with a conjecture of Bowden--Crowley is discussed in Section \ref{sec: conjBC}. Section \ref{sec: ThmC} is devoted to prove Theorem \ref{thm: result3} where the main ingredient is the Morse--Bott Reeb orbit property in Proposition \ref{prop: MBReebprop}.

\section{Symplectic homology}  \label{sec: SH}

\subsection{Filtered symplectic homology} \label{sec: fSH} We briefly recall basic notions in (equivariant) symplectic homology for Liouville domains. We mainly follow \cite{BO_MB, Gutt}. See also \cite{CiOa18}.

\subsubsection{Admissible Hamiltonians} Let $(W, \lda)$ be a Liouville domain. 
%\begin{assumption}\label{ass: globalassumption}
%Throughout the paper, we assume that  $(W, \lda)$ is topologically simple i.e. $c_1(TW) = 0$ and the inclusion $\p W \rightarrow W$ is $\pi_1$-injective.
%\end{assumption}
The boundary $\p W$ admits a canonical contact structure $\xi = \ker \alpha$ where $\alpha = \lda|_{\p W}$. By attaching a part of symplectization $[1, \infty) \times \p W$ along the boundary $\p W$ through the Liouville flow, we complete the domain $W$ as 
\[
\widehat{W} = W \cup_{\p W} ([1, \infty) \times \p W), \quad \widehat{\lda}= \lda \cup_{\p W} (r \alpha)
\]
where $r$ denotes the coordinate of $[1, \infty)$. A time-dependent Hamiltonian $H: S^1 \times \widehat W \rightarrow \R$ is called  \emph{admissible} if Hamiltonian 1-periodic orbits are nondegenerate, $H$ is negative and $C^2$-small (and Morse) in the interior $W \subset \widehat{W}$, and $H$ is of the form $\mathfrak{a}r+\mathfrak{b}$ at the end, where $\mathfrak{a} >0, \mathfrak{b} \in \R$. We call $\mathfrak{a}$ the \emph{slope} of $H$, and we assume that $\mathfrak{a}$ is not the period of a periodic Reeb orbit on the contact boundary $(\p W, \alpha)$.  We denote by $\mathcal{P}(H)$ the set of \emph{contractible} Hamiltonian 1-periodic orbits. To each $\gamma \in \mathcal{P}(H)$ we associate the \emph{Conley--Zehnder index} $\CZ(\gamma) \in \Z$ as in \cite[Section 2]{BO_MB}, which is well-defined due to the assumption that $c_1(TW) = 0$. In this paper the Hamiltonian vector field $X_H$ is defined by $\iota_{X_H}\ow  = dH$.

\begin{remark}\label{rem: globalassumption}
Throughout the paper, we will often assume that  $(W, \lda)$ is \emph{topologically simple}, i.e. $c_1(TW) = 0$ and the inclusion $\p W \rightarrow W$ is $\pi_1$-injective. In particular, the condition $c_1(TW) =0$ is to give a $\Z$-grading via Conley--Zehnder index in symplectic homology. Note that if $W$ is a Weinstein domain of dimension $2n$ with $n \geq 3$, then it is $\pi_1$-injective. See also Remark \ref{rem: whyc1xi}.
%\begin{enumerate} 

%\item The $\pi_1$-injective condition guarantees that if a loop $\gamma$ on the boundary $\p W$ is contractible \emph{in} W, then so is \emph{in} $\p W$. Observe that if $W$ is a Weinstein domain of dimension $2n$ with $n \geq 3$, then it is $\pi_1$-injective.

%\item The condition $c_1(TW) =0$ is to give a $\Z$-grading via Conley--Zehnder index in symplectic homology. See also Remark \ref{rem: whyc1xi}.
%\end{enumerate}
\end{remark}

\subsubsection{Chain complex} Let $\F$ be an arbitrary field. Take an \emph{admissible} $S^1$-family of $d \widehat{\lda}$-compatible almost complex structures $J$ on $\widehat W$ in the sense of \cite[Section 2]{BO_MB}. For the pair $(H, J)$, we define a chain complex $\CF_*(H, J)$ to be a $\Z$-graded vector space over $\F$ by
$$
\CF_k(H, J) = \bigoplus_{\substack{\gamma \in \mathcal{P}(H) \\ |\gamma| = k}} \F \langle \gamma \rangle
$$
where the grading is defined by $|\gamma| : =  - \CZ(\gamma)$. As fairly standard in Floer theory, the differential 
$$
\p: \CF_k(H, J) \rightarrow \CF_{k-1}(H, J)
$$
is defined by counting Floer trajectories connecting 1-periodic orbits, which is well-defined for a generic pair $(H, J)$. The homology of the chain complex $(\CF_*(H, J), \p)$ is denoted by $\HF_*(H, J)$, called the \emph{Hamiltonian Floer homology}. Hamiltonian Floer homology groups form a directed system via increasing the slope $\mathfrak{a}$ of admissible Hamiltonians, and the \emph{symplectic homology} $\SH_*(W)$ is defined to be the direct limit:
$$
\SH_*(W) : = \varinjlim_{\mathfrak{a}} \HF_*(H, J).
$$

\begin{remark}\label{rem: MBpertadmHamcorresp}
Let $W$ be a nondegenerate Liouville domain with $c_1(TW) = 0$, meaning that the contact form $\alpha$ is nondegenerate. Then \emph{time-independent} admissible Hamiltonians $H$ are transversely nondegenerate and Hamiltonian 1-periodic orbits appear in Morse--Bott $S^1$-families; see \cite[Lemma 3.3]{BO_MB}. Each $S^1$-family corresponds to a Reeb orbit $\gamma$. There is a standard way to perturb $H$ to a time-dependent admissible Hamiltonian $H_{\delta}$, for example as in \cite[Equation (25)]{BO_MB}, and the Morse--Bott $S^1$-families $\gamma$ of $H$ split into two copies of 1-periodic orbits $\hat{\gamma}$, $\check{\gamma}$ of $H_{\delta}$. The relation between the Conley--Zehnder indices of those orbits are given by 
$$
\CZ(\hat \gamma) = \CZ(\gamma) - 1, \; \CZ(\check \gamma) = \CZ(\gamma)
$$
where $\CZ(\gamma)$ is the Conley--Zehnder index of the Reeb orbit $\gamma$ \cite[Lemma 3.4]{BO_MB}.
\end{remark}

%The \emph{symplectic homology} $\SH_*(W)$ of the Liouville domain $(W, \lda)$ is the Hamiltonian Floer homology of the completion $(\widehat{W}, \widehat{\lda})$ using a certain type of quadratic Hamiltonians on $\widehat{W}$. The chain complex is generated by contractible Hamiltonian $1$-periodic orbits.

\subsubsection{Action filtration} \label{sec: actionfilt}The Floer chain complex $\CF_*(H, J)$ admits a canonical filtration via the following action functional on the free loop space of $\widehat W$:
$$
\mathcal{A}_H(\gamma) = - \int_{S^1} \gamma^* \lda  - \int_0^1 H(t, \gamma(t))dt.
$$
The value $\mathcal{A}_H(\gamma)$ is decreasing along Floer trajectories, and it yields a filtration on the chain complex $\CF_*(H, J)$ as
$$
\CF_*^a(H, J) : = \bigoplus_{\substack{\gamma \in \mathcal{P}(H) \\ |\gamma| = k \\ \mathcal{A}_H(\gamma) < a} } \F \langle \gamma \rangle
$$
for every $a \in \R$. Accordingly, we define the \emph{filtered symplectic homology} $\SH_*^a(W)$ by the direct limit of the filtered Hamiltonian Floer homology groups:
$$
\SH_*^a(W) : = \varinjlim_{\mathfrak{a}} \HF_*^a(H, J).
$$
By the assumption that $H$ is $C^2$-small and Morse on $W$, we can take a sufficiently small number $\epsilon >0$ such that there is a canonical identification
\begin{equation}\label{eq: SHandHo}    
\SH_*^{\epsilon}(W) \cong \Ho_{*+n}(W, \p W).
\end{equation}
See e.g. \cite[Proposition 2.10]{CiOa18}.

Let $a < b$ be real numbers. The quotient complex $\CF_*^{(a, b)}(H, J)$ of the action window $(a, b)$ is defined by
$$
\CF_*^{(a, b)}(H, J)  := \CF_*^{b}(H, J)/ \CF_*^{a}(H, J).
$$
The corresponding symplectic homology  of the action window is denoted by $
\SH_*^{(a, b)}(W)$. In particular, we call 
$$
\SH_*^+(W) : = \SH_*^{(\epsilon, \infty)}(W), \quad \SH_*^{+, a}(W) : = \SH_*^{(\epsilon, a)}(W)
$$ 
the \emph{(filtered) positive symplectic homology}, where $\epsilon$ is chosen as in \eqref{eq: SHandHo} and $a > \epsilon$. We have the following useful tautological long exact sequences \cite[Equation (6)]{CiOa18}.

\begin{proposition}\label{prop: lesinSH}
For $a < b< c$, there exists a long exact sequence on filtered symplectic homology
\begin{equation}\label{eq: tau_LES}
\rightarrow \SH_k^{(a, b)}(W) \xrightarrow{j} \SH_k^{(a, c)}(W) \rightarrow \SH_k^{(b, c)}(W) \xrightarrow{\iota} \SH_{k-1}^{(a, b)}(W) \rightarrow.
\end{equation}
In particular, under the identification \eqref{eq: SHandHo},
\begin{equation}\label{eq: tau_LES_Ho}
\rightarrow \Ho_{k + n}(W, \p W) \xrightarrow{j^a} \SH^a_k(W) \rightarrow \SH_k^{+, a}(W) \xrightarrow{\iota^a} \Ho_{k + n-1}(W, \p W) \rightarrow
    \end{equation}
    for each $a > \epsilon$.
\end{proposition}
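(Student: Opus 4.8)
The plan is to obtain \eqref{eq: tau_LES} from the elementary short exact sequence of filtered Floer complexes attached to a single admissible pair $(H,J)$, pass to the induced long exact sequence in homology, and then take the direct limit over the slope $\mathfrak{a}$; the sequence \eqref{eq: tau_LES_Ho} will then be the special case in which the lower endpoint of the action window is pushed below every action value, combined with the identification \eqref{eq: SHandHo}.

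First I would fix an admissible pair $(H,J)$. Since $\mathcal{A}_H$ decreases along Floer trajectories, the differential does not increase the action, so for any $a<b<c$ the subspaces $\CF_*^a(H,J)\subseteq\CF_*^b(H,J)\subseteq\CF_*^c(H,J)$ are subcomplexes; consequently $\CF_*^{(a,b)}(H,J)=\CF_*^b(H,J)/\CF_*^a(H,J)$ is a subcomplex of $\CF_*^{(a,c)}(H,J)$ whose quotient is, by the third isomorphism theorem, isomorphic to $\CF_*^{(b,c)}(H,J)$. This gives a short exact sequence of chain complexes
\[
0 \rightarrow \CF_*^{(a,b)}(H,J) \rightarrow \CF_*^{(a,c)}(H,J) \rightarrow \CF_*^{(b,c)}(H,J) \rightarrow 0,
\]
whose long exact sequence in homology is exactly \eqref{eq: tau_LES} at the level of filtered Hamiltonian Floer homology: the map $j$ is induced by the inclusion, the unlabelled map by the projection, and $\iota$ is the connecting homomorphism.

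It then remains to pass to the direct limit. The continuation maps used to define $\SH_*(W)$ and its filtered versions preserve the action filtration (this is precisely how the filtered invariants are set up; see \cite[Section 2]{BO_MB}, \cite{Gutt}), hence they commute with the three maps above, and since we work over a field $\F$ the functor $\varinjlim_{\mathfrak{a}}$ is exact; taking the limit therefore yields \eqref{eq: tau_LES}. For \eqref{eq: tau_LES_Ho} I would use a cofinal family of admissible Hamiltonians whose interior ($C^2$-small, negative, Morse) part is fixed, so that every generator has action bounded below by a common constant $a_0$; then $\CF_*^{a_0}(H,J)=0$, hence $\SH_*^{(a_0,d)}(W)=\SH_*^{d}(W)$ for every $d$, and in particular $\SH_*^{(a_0,\epsilon)}(W)\cong\Ho_{*+n}(W,\p W)$ by \eqref{eq: SHandHo}. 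Feeding the triple $a_0<\epsilon<a$ into \eqref{eq: tau_LES} and substituting $\SH_k^{(a_0,a)}(W)=\SH_k^a(W)$ and $\SH_k^{(\epsilon,a)}(W)=\SH_k^{+,a}(W)$ produces \eqref{eq: tau_LES_Ho}, with $j^a$ and $\iota^a$ the induced maps. The only step requiring genuine care is the filtration-compatibility of the continuation maps (and the resulting independence of the construction from the chosen cofinal family); once that is granted, the argument is pure homological algebra together with the cited identification \eqref{eq: SHandHo}.
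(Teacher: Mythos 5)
Your proof is correct and follows the standard route: short exact sequence of filtered Floer subquotients for a fixed admissible pair, the induced long exact sequence, and exactness of $\varinjlim$ over a field; the paper itself does not prove the proposition but cites \cite[Equation~(6)]{CiOa18}, where the argument is exactly this one. One small simplification worth noting: since the paper's conventions already force $\mathcal{A}_H(\gamma)>0$ for every $1$-periodic orbit of an admissible Hamiltonian (constant orbits have action $-H(p)>0$, non-constant orbits have action close to a Reeb period), the filtered complex $\CF_*^{a_0}(H,J)$ vanishes for $a_0\le 0$ for \emph{every} admissible $H$, so you may take $a_0=0$ (or formally $a_0=-\infty$, using $\SH^a(W)=\SH^{(-\infty,a)}(W)$ by definition) without arranging a cofinal family with a fixed interior part; the substitution $\SH^{(a_0,\epsilon)}(W)\cong\Ho_{*+n}(W,\p W)$ then proceeds as you wrote.
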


%We denote in particular 
%\[
%\SH_*^{(\epsilon, \infty)} = \SH_*^+(W)
%\]
%and call it \emph{the positive symplectic homology}. 

%\begin{proposition}[Cieliebak--Oancea, Equation (6)] \label{prop: lesinSH}
 %   For $a < b< c$, there exists a long exact sequence on filtered symplectic homology
  %  \begin{equation}\label{eq: tau_LES}
%\rightarrow \SH_k^{(a, b)}(W) \rightarrow \SH_k^{(a, c)}(W) \rightarrow \SH_k^{(b, c)}(W) \rightarrow \SH_{k-1}^{(a, b)}(W) \rightarrow
%    \end{equation}
    
  %  EQUIVARIANT VERSION IN COMMUTATIVE DIAGRAM
%\end{proposition}

%Let us denote the map \eqref{eq: tau_LES_Ho} from $\Ho_{k + n}(W, \p W)$ by
%$$
%j^a : \Ho_{k+n}(W, \p W) \rightarrow \SH_k(W).
%$$

\subsection{Equivariant symplectic homology} \label{sec: eSH} A variant of symplectic homology is its equivariant version, called \emph{(filtered) equivariant symplectic homology}, denoted by $\SH_*^{S^1, a}(W)$. It is defined by the fact that $S^1$ acts on the loop space of $\widehat W$ via the translation on the domain of loops. In this paper, we use the version described in \cite[Section 2.1]{Gutt}. It is a $\Z$-graded vector space over $\F$ and admits a canonical action filtration \cite[Proposition 2.11]{Gutt}. The positive action part is denoted by $\SH_*^{S^1, +, a}(W)$. The following are analogous properties to the non-equivariant version which we will use later. 

\begin{enumerate}
\item For a small $\epsilon > 0$ as in \eqref{eq: SHandHo}, we have a canonical identification \cite{CiOa18, Vit} 
$$
\SH_*^{S^1, \epsilon}(W) \cong \Ho^{S^1}_{*+n}(W, \p W)
$$
where $S^1$ acts trivially on $W$.
\item (Tautological exact sequence \cite[Proposition 6.5]{GH18}) There is a long exact sequence 
$$
\rightarrow \Ho_{k + n}^{S^1}(W, \p W) \xrightarrow{j^{S^1, a}} \SH^{S^1, a}_k(W) \rightarrow \SH_k^{S^1, +, a}(W) \xrightarrow{\iota^{S^1, a}} \Ho_{k + n-1}^{S^1}(W, \p W) \rightarrow
$$
for each $a > \epsilon$.
\item (Gysin exact sequence \cite{BO_Gysin, CiOa18}) There is a long exact sequence
$$
 \rightarrow \SH_k^{+}(W) \xrightarrow{g^+} \SH^{S^1, +}_{k} (W) \xrightarrow{U} \SH^{S^1, +}_{k-2} (W) \xrightarrow{\delta} \SH_{k-1}^{+}(W) \rightarrow
$$
where we call the middle map $U: \SH^{S^1, +}_{k} (W) \rightarrow \SH^{S^1, +}_{k-2} (W)$ the \emph{$U$-map} following \cite{GH18}. The Gysin exact sequence is compatible with the action filtration and hence compatible with the tautological exact sequence.
\end{enumerate}

\subsection{Symplectic homology capacities}\label{sec: shcapacities} Let $(W, \lda)$ be a Liouville domain. There are several symplectic capacities which can be defined from filtered symplectic homology. One of them is the \emph{symplectic homology capacity} $c^{\SH}(W)$ given as follows.
$$
c^{\SH}(W) := \inf \{a > 0 \;|\; j^a[W, \p W] = 0\} \in [0, \infty]
$$
where the map $j^a: \Ho_*(W, \p W) \rightarrow \SH^{a}(W)$ is of the tautological exact sequence \eqref{eq: tau_LES_Ho}. Conventionally, $c^{\SH}(W) = \infty$ if $j^a[W, \p W] \neq 0$ for every $a > 0$. The capacity $c^{\SH}(W)$ is finite if and only if $\SH(W) = 0$. For detailed discussions and applications of $c^{\SH}(W)$, we refer the reader to \cite{AbKa22, Irie, KiKiKw22}.

\begin{remark}\label{rem: eqdefcSH}
By the tautological exact sequence  \eqref{eq: tau_LES_Ho}, the capacity $c^{\SH} (W)$ can also be defined by
$$
c^{\SH}(W) = \inf \{a >  0 \;|\; \exists \; c \in \SH_{n+1}^{+, a}(W) \text{ such that $\iota^{a}(c) = [W, \p W]$} \}.
$$
\end{remark}

In \cite[Section 4]{GH18}, a sequence of symplectic capacities $c_k^{\GH}(W)$ for $k \in \N$, called the \emph{Gutt--Hutchings capacities}, is defined using filtered positive equivariant symplectic homology $\SH^{S^1, + , a}(W)$. For a nondegenerate Liouville domain $W$, the capacity $c_k^{\GH}(W) \in [0, \infty]$ is defined by
$$
c_k^{\GH}(W) : = \inf\{a > 0\;|\; \exists \; c \in \SH_{2k + n-1}^{S^1, +, a}(W) \text{ such that $(\iota^{S^1} \circ U^{k-1} \circ \delta^{+, a})(c) = [W, \p W]^{S^1}$}\}
$$ 
where $ \delta^{+, a}:  \SH_{2k+n-1}^{S^1, +, a}(W) \rightarrow \SH_{2k+n-1}^{S^1, +}(W)$ is the natural inclusion and the class $[W, \p W]^{S^1} \in \Ho^{S^1}(W, \p W)$ denotes the equivariant fundamental class of $(W, \p W)$. For later use, we make the following simple observation.
\begin{proposition}\label{prop: eqdefc1gh}
The first capacity $c_1^{\GH}(W)$ can equivalently be given by
$$
c_1^{\GH}(W) = \inf \{a > 0 \;|\;  j^{S^1, a}[W, \p W]^{S^1} = 0\}
$$
where $j^{S^1, a}$ is the map in the tautological exact sequence.
\end{proposition}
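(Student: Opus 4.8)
The plan is to compare the two definitions by specializing the general formula for $c_k^{\GH}(W)$ to $k=1$ and analyzing the composite map $\iota^{S^1} \circ U^{0} \circ \delta^{+,a}$. Since $U^{k-1} = U^0 = \mathrm{id}$ when $k=1$, the general definition reads
$$
c_1^{\GH}(W) = \inf\{a>0 \;|\; \exists\, c \in \SH_{n+1}^{S^1,+,a}(W) \text{ with } (\iota^{S^1}\circ \delta^{+,a})(c) = [W,\p W]^{S^1}\},
$$
where $\iota^{S^1} \colon \SH^{S^1,+}_{n+1}(W) \to \Ho^{S^1}_{n-1}(W,\p W)$ (using that $k+n-1 = n$ under the grading conventions, so the target degree matches the equivariant fundamental class). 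So the key is to show that the existence of such a $c$ in the \emph{filtered} group, mapping to the fundamental class under $\iota^{S^1}\circ\delta^{+,a}$, is equivalent to the vanishing $j^{S^1,a}[W,\p W]^{S^1}=0$. First I would invoke the equivariant tautological long exact sequence from Section~\ref{sec: eSH}(2),
$$
\cdots \to \Ho_{n}^{S^1}(W,\p W) \xrightarrow{j^{S^1,a}} \SH^{S^1,a}_{0}(W) \to \SH_{0}^{S^1,+,a}(W) \xrightarrow{\iota^{S^1,a}} \Ho_{n-1}^{S^1}(W,\p W) \to \cdots
$$
adjusted to the correct degrees; exactness says that $[W,\p W]^{S^1}$ lies in the image of $\iota^{S^1,a}$ (restricted to the filtered level $a$) if and only if $j^{S^1,a}[W,\p W]^{S^1} = 0$.

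The one subtlety to nail down is the relationship between the unfiltered map $\iota^{S^1}$ appearing in the Gutt--Hutchings definition (precomposed with the inclusion $\delta^{+,a}$ of the filtered group into the unfiltered one) and the genuinely filtered map $\iota^{S^1,a}$ appearing in the tautological sequence at level $a$. These fit into a commuting square: $\iota^{S^1} \circ \delta^{+,a} = \delta \circ \iota^{S^1,a}$, where $\delta\colon \Ho^{S^1}(W,\p W) \to \Ho^{S^1}(W,\p W)$ on the target is the identity (the target of the tautological sequence is already the unfiltered, i.e. absolute, equivariant homology and does not depend on $a$). Hence $\iota^{S^1}\circ\delta^{+,a}$ and $\iota^{S^1,a}$ have literally the same image in $\Ho^{S^1}_{n-1}(W,\p W)$, so the condition ``$\exists\, c$ with $(\iota^{S^1}\circ\delta^{+,a})(c) = [W,\p W]^{S^1}$'' is exactly ``$[W,\p W]^{S^1} \in \im \iota^{S^1,a}$''. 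Combining this with the exactness statement above gives
$$
\{a>0 \;|\; \exists\, c \text{ as in the GH definition}\} = \{a>0 \;|\; j^{S^1,a}[W,\p W]^{S^1}=0\},
$$
and taking infima yields the claim.

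The main obstacle I anticipate is purely bookkeeping: matching the degree conventions so that the equivariant fundamental class $[W,\p W]^{S^1}$ sits in the correct slot of the tautological sequence at level $a$ (this is where the shift by $n$ in $\SH^{S^1,\epsilon}_* \cong \Ho^{S^1}_{*+n}$ and the index $2k+n-1$ in the GH definition must be reconciled), and confirming that $\delta^{+,a}$ is genuinely the inclusion-induced map so that the commuting square above holds on the nose rather than merely up to sign. Once the indices are aligned, the proof is a one-line diagram chase through exactness; no analysis or moduli-space input is needed beyond the formal properties of filtered equivariant symplectic homology already recorded in Section~\ref{sec: eSH}.
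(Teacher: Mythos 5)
Your argument is essentially the paper's own proof: the same commutative square relating the filtered and unfiltered tautological exact sequences (with the identity map on $\Ho^{S^1}(W,\p W)$, so that $\iota^{S^1}\circ\delta^{+,a}=\iota^{S^1,a}$) combined with exactness of the filtered sequence, chased in both directions. The only correction needed is the degree bookkeeping you flagged: with $k=1$ the class $c$ lives in $\SH^{S^1,+,a}_{n+1}(W)$ and $\iota^{S^1,a}$ lands in $\Ho^{S^1}_{(n+1)+n-1}(W,\p W)=\Ho^{S^1}_{2n}(W,\p W)$, which is where the equivariant fundamental class sits, not $\Ho^{S^1}_{n-1}(W,\p W)$ as written.
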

\begin{proof}
For convenience, let us temporarily denote
$$
c(W) := \inf \{a > 0 \;|\;  j^{S^1, a}[W, \p W]^{S^1} = 0\}.
$$
Consider the following commutative diagram.
$$
\begin{tikzcd} 
\SH^{S^1, a}(W) \arrow[r]  \arrow[d, "\delta^a"]&  \SH^{S^1, +, a}(W) \arrow[r, "\iota^{S^1, a}"] \arrow[d, "\delta^{+, a}"] & \Ho^{S^1}(W, \p W) \arrow[r, "j^{S^1, a}"] \arrow[d, "\id"] & \SH^{S^1, a}(W)  \arrow[d, "\delta^{a}"] \\
\SH^{S^1}(W) \arrow[r] & \SH^{S^1, +}(W) \arrow[r, "\iota^{S^1}"] & \Ho^{S^1}(W, \p W) \arrow[r,  "j^{S^1}"] & \SH^{S^1}(W)
\end{tikzcd}
$$
Here the vertical maps are the natural inclusions, and the horizontal maps are from the tautological exact sequence in Section \ref{sec: eSH}. (Just for simplicity we omitted the grading.) Suppose that there exists $c \in \SH^{S^1, +, a}(W)$ such that $(\iota^{S^1}\circ \delta^{+, a})(c) = [W, \p W]^{S^1} \in  \Ho^{S^1}(W, \p W)$. Then by commutativity we have
$$
\iota^{S^1, a} (c) = (\iota^{S^1}\circ \delta^{+, a})(c) = [W, \p W]^{S^1}.
$$
It follows that
$$
j^{S^1, a}[W, \p W]^{S^1} = (j^{S^1, a} \circ \iota^{S^1, a}) (c) = 0.
$$
This implies that $c_1^{\GH}(W) \geq c(W)$. The opposite inequality $c_1^{\GH}(W) \leq c(W)$ can also be shown by a similar diagram chasing.
\end{proof}

%\begin{remark}\label{rem: eqdefc1gh}
%For later use, we remark that the first capacity $c_1^{\GH}(W)$ in particular can equivalently be defined by
%$$
%c_1^{\GH}(W) = \inf \{a > 0 \;|\;  j^{S^1, a}[W, \p W]^{S^1} = 0\}
%$$

%\end{remark}

For an arbitrary (possibly degenerate) Liouville domain $W$, the capacity $c_k^{\GH}(W)$ is defined in \cite[Section 4.2]{GH18} by perturbing the domain to be nondegenerate: For a smooth function $f: \p W \rightarrow \R$ on the contact boundary $\p W$, the \emph{graphical subdomain} $W_f$ of the completion $\widehat W$ is defined by
$$
W_f : = \{(r, y) \in \R \times \p W \subset \widehat W \;|\; f(y)\geq r\}
$$
where $\R \times \p W$ is embedded in $\widehat W$ via the Liouville flow. Then we have that $\widehat{\lda}|_{\p W_f} = f \alpha$ under the identification 
$$
\p W \rightarrow \p W_f, \quad y \mapsto (f(y), y) \in \R \times \p W.
$$
The domain $W_f$ is nondegenerate for generic $f$. As in \cite[Definition 4.4]{GH18}, we define
$$
c_k^{\GH}(W) := \inf_{f >1} c_k^{\GH}(W_f) = \sup_{f < 1} c_k^{\GH}(W_f)
$$
where the infimum and the supremum are taken over nondegenerate $W_f$.

\begin{remark}
Conjecturally, the Gutt--Hutchings capacities agree with the Ekeland--Hofer capacities \cite{EHcapacity} for starshaped domains $W \subset \R^{2n}$. See \cite[Conjecture 1.9]{GH18}.
\end{remark}

We will use the following axiomatic properties of the capacity $c_k^{\GH}(W)$ from \cite[Proposition 4.6]{GH18}.

\begin{proposition} \label{prop: axiomaticproportyofc_i}
Let $(W, \lda)$ be a topologically simple Liouville domain. Then the capacity $c_k^{\GH}(W)$ satisfies the following properties.
\begin{enumerate}
\item (Monotonicity) If $W'$ is a subdomain of $W$, then 
$$
c_k^{\GH}(W') \leq c_k^{\GH}(W). 
$$
\item (Reeb orbit) If $c_k^{\GH}(W) < \infty$, then $c_k^{\GH}(W) = \mathcal{A}(\gamma)$ for some Reeb orbit $\gamma$ in $(\p W, \alpha)$. If, in addition, $W$ is nondegenerate, then the Conley-Zehnder index of $\gamma$ is given by
\begin{equation}\label{eq: CZindReeborbitpropertynondeg}
\CZ(\gamma) = 2k + n-1.
\end{equation}
\end{enumerate}
\end{proposition}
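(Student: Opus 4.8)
The plan is to follow the proof of \cite[Proposition 4.6]{GH18}, adapting it to the present setting where the grading comes from $c_1(TW)=0$ and the Conley--Zehnder index is normalized as in \cite[Section 2]{BO_MB}. The two statements require somewhat different arguments, so I would treat them separately.

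\emph{Monotonicity.} Suppose $W'$ is a subdomain of $W$, so that after completing we have a Liouville embedding $\widehat{W'} \hookrightarrow \widehat{W}$ (or rather $W' \subset W \subset \widehat W$), which induces a transfer/restriction map on filtered positive equivariant symplectic homology $\SH^{S^1,+,a}_*(W) \to \SH^{S^1,+,a}_*(W')$ commuting with the $U$-map, the connecting map $\delta^{+,a}$, the inclusions $\delta^{+,a}$ into the unfiltered groups, and the map $\iota^{S^1}$ to equivariant relative homology, and sending $[W,\p W]^{S^1}$ to $[W',\p W']^{S^1}$ (this is the standard naturality of the Viterbo-type transfer morphism, compatible with action filtrations; see \cite{GH18, Gutt, CiOa18}). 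Given $a > c_k^{\GH}(W)$, pick $c \in \SH^{S^1,+,a}_{2k+n-1}(W)$ with $(\iota^{S^1}\circ U^{k-1}\circ \delta^{+,a})(c) = [W,\p W]^{S^1}$; push $c$ forward to $\SH^{S^1,+,a}_{2k+n-1}(W')$ and use commutativity of the whole diagram to conclude that the image witnesses $a > c_k^{\GH}(W')$. Taking the infimum over such $a$ gives $c_k^{\GH}(W') \le c_k^{\GH}(W)$. For degenerate $W, W'$ one reduces to the nondegenerate case via graphical perturbations $W_f \subset W'_{f'}$ and the definition $c_k^{\GH}(W) = \inf_{f>1} c_k^{\GH}(W_f) = \sup_{f<1} c_k^{\GH}(W_f)$.

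\emph{Reeb orbit property.} Assume first $W$ is nondegenerate, so $\alpha$ is a nondegenerate contact form and $\SH^{S^1,+,a}_*(W)$ has a chain model generated by (good) Reeb orbits, with the action filtration given by the periods $\mathcal{A}(\gamma) = \int_\gamma \alpha$ and the grading of a generator $\gamma$ in degree $\CZ(\gamma) + n - 1 + 2(\text{multiplicity of an auxiliary Morse critical point})$ in the equivariant complex. Fix $a := c_k^{\GH}(W) < \infty$. The key point is that $a$ must be a "jump value": for $\epsilon > 0$ small the class $[W,\p W]^{S^1}$ is hit through $\SH^{S^1,+,a+\epsilon}$ but not through $\SH^{S^1,+,a-\epsilon}$, so the persistence module $a \mapsto \mathrm{image\ of\ the\ relevant\ map}$ changes at $a$; since the filtration takes values in the discrete set of Reeb periods $\{\mathcal{A}(\gamma)\}$ (discrete because $\alpha$ is nondegenerate), $a$ must equal $\mathcal{A}(\gamma)$ for some Reeb orbit $\gamma$. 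For the index statement $\CZ(\gamma) = 2k + n - 1$: by definition the witnessing class $c$ lives in degree $2k+n-1$, and $U^{k-1}$ shifts degree by $-2(k-1)$, so $\delta^{+,a}(c)$ and the pre-image under $\iota^{S^1}$ of $[W,\p W]^{S^1}$ (which sits in degree $n-1$, the degree of the equivariant fundamental class) sit in degrees differing exactly so that the generator realizing the action value $a$ carries $\CZ(\gamma) = 2k+n-1$ — this is exactly the degree-tracking computation in \cite[Section 4]{GH18}, now using the normalization $|\gamma| = -\CZ(\gamma)$ and $\SH^\epsilon_* \cong \Ho_{*+n}(W,\p W)$ from \eqref{eq: SHandHo} together with its equivariant analogue. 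Finally, for general degenerate $W$ one writes $a = c_k^{\GH}(W) = \lim c_k^{\GH}(W_{f_i})$ for graphical perturbations $f_i \to 1$; each $c_k^{\GH}(W_{f_i}) = \mathcal{A}(\gamma_i)$ for a Reeb orbit $\gamma_i$ of $f_i\alpha$, and a standard Arzel\`a--Ascoli compactness argument for Reeb orbits of uniformly converging contact forms, with a uniform action bound, produces a limiting Reeb orbit $\gamma$ of $\alpha$ with $\mathcal{A}(\gamma) = a$ (the index identity is not claimed in the degenerate case).

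\emph{Main obstacle.} The genuinely delicate point is the index bookkeeping in the nondegenerate case: one must carefully match the grading convention of \cite{BO_MB} (where $|\gamma| = -\CZ(\gamma)$ and the Morse--Bott splitting shifts indices as in Remark \ref{rem: MBpertadmHamcorresp}) against the convention of \cite{GH18}, and verify that the composite $\iota^{S^1}\circ U^{k-1}\circ \delta^{+,a}$ pins down precisely $\CZ(\gamma) = 2k+n-1$ rather than an off-by-one or off-by-$2$ variant. Everything else — the transfer map and its naturality, discreteness of the action spectrum in the nondegenerate case, and the compactness passage to the degenerate case — is standard, so I would cite \cite{GH18, Gutt, CiOa18} for those and concentrate the written proof on the degree computation and on confirming that $a$ is forced to be a period.
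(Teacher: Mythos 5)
The paper does not prove this proposition: it is imported verbatim from \cite[Proposition~4.6]{GH18}, and the only commentary the authors add is the subsequent remark that the index condition \eqref{eq: CZindReeborbitpropertynondeg} is not stated there explicitly but is extracted from the proof of \cite[Theorem~1.1]{GH18}, which applies to topologically simple Liouville domains since the positive equivariant symplectic homology and the maps involved remain $\Z$-graded. Your proposal reconstructs the Gutt--Hutchings argument rather than citing it; for the two statements that actually appear in \cite[Proposition~4.6]{GH18}, your sketch is along the right lines: monotonicity via the Viterbo transfer and its compatibility with the $U$-map and the tautological exact sequence, the Reeb orbit statement via discreteness of the action spectrum in the nondegenerate case, and the degenerate case via graphical perturbations and an Arzel\`a--Ascoli limit (with the index identity correctly not claimed there).

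The place where you fall short is exactly the part the paper singles out as needing a separate justification, namely the identity $\CZ(\gamma) = 2k+n-1$. Your degree bookkeeping only shows that the witnessing class $c$ sits in equivariant degree $2k+n-1$. But a generator of $\SH^{S^1,+,a}_{2k+n-1}(W)$ corresponds to a pair $(\gamma,m)$ with $\CZ(\gamma) = (2k+n-1) - 2m$ for some integer $m \geq 0$ (this is precisely the correspondence the paper quotes from p.~3587 of \cite{GH18} in the proof of Proposition \ref{prop: vanishingeSHlowdeg}); your argument as written does not rule out $m>0$. Closing this requires tracking how the jump in the persistence module interacts with the $U$-map and $\iota^{S^1}$ at the chain level, which is what the authors delegate to the proof of \cite[Theorem~1.1]{GH18}. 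You flagged this as the ``main obstacle,'' correctly --- but you identified the difficulty without actually resolving it, so as a stand-alone proof the proposal is incomplete at that point.
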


\begin{remark}
The Conley--Zehnder index condition \eqref{eq: CZindReeborbitpropertynondeg} of the Reeb orbit property for nondegenerate Liouville domains is not explicitly mentioned in \cite{GH18}, but it follows directly from their proof for the starshaped domains \cite[Theorem 1.1]{GH18} where the positive equivariant symplectic homology and the maps involved are likewise $\Z$-graded.
\end{remark}

\section{Dynamically convex contact manifolds} \label{sec: DC}

%\begin{definition}
%A contact manifold $(\Sigma, \xi)$ is called \emph{dynamically convex} if there is a sequence of contact forms $\alpha_i$ for $\xi$ and a sequence of positive numbers $D_i$ such that for $i \in \N$
%\begin{enumerate}
%\item $\alpha_ i \geq \alpha_{i +1}$;
%\item $D_i < D_{i+1}$ and $\displaystyle \lim_{i \rightarrow \infty} D_i = \infty$;
%\item Periodic Reeb orbits of $\alpha_i$ with action less than $D_i$ are nondegenerate;
%\item $\CZ(\gamma) \geq n+1$ for all $\gamma \in \mathcal{P}^{<D_i}(\alpha_i)$.
%\end{enumerate}
%\end{definition}
Let $(\Sigma, \xi)$ be a contact manifold of dimension $2n-1$ with $c_1(\xi) = 0$. For a contact form $\alpha$, denote the set of contractible periodic Reeb orbits of the corresponding Reeb flow by $\mathcal{P}(\alpha)$. We assign the \emph{Conley--Zehnder index} $\CZ(\gamma)$ to each nondegenerate contractible Reeb orbit $\gamma \in \mathcal{P}(\alpha)$ in the standard way, e.g. as in \cite[Section 2]{BO_MB} and \cite[Section 4]{GiGu20}: Taking a capping disk of $\gamma$, we have a symplectic trivialization of the contact structure $\xi$ over $\gamma$. With respect to this trivialization, obtain a path of symplectic matrices which represent the linearized Reeb flow restricted to $\xi$. We then set $\CZ(\gamma)$ to be the Conley--Zehnder index of the path. This is independent of the choice of trivializations due to the assumption $c_1(\xi) = 0$. A nondegenerate contact form $\alpha$ is called \emph{dynamically convex} if $\CZ (\gamma) \geq n+1$ for any $\gamma \in \mathcal{P}(\alpha)$. 

For a general (not necessarily nondegenerate) contact form, we define the notion of dynamical convexity in terms of an extension of the Conley--Zehnder index for degenerate orbits, called the \emph{lower Conley--Zehnder} index $\CZ_-(\gamma)$. Basically, $\CZ_-(\gamma)$ is defined to be the infimum of Conley--Zehnder indices of  nondegenerate nearby perturbations. See \cite{GiGu20, booklong} and references therein for a precise definition.

\begin{definition}\label{def: DC}
A contact form $\alpha$ is \emph{dynamically convex} if 
$$
\CZ_{-} (\gamma) \geq n+1
$$
for any contractible periodic orbit $\gamma \in \mathcal{P}(\alpha)$. A Liouville domain $(W, \lda)$ is called \emph{dynamically convex} if the induced contact form $\alpha = \lda|_{\p W}$ is dynamically convex.
\end{definition}

\begin{remark}\label{rem: whyc1xi}\
\begin{enumerate}
\item For a Liouville domain $W$, note that $i^* c_1(TW) = c_1(\xi)$ where $i : \p W \rightarrow W$ is the inclusion and $\xi$ is the canonical contact structure on $\p W$. Therefore if $c_1(TW) = 0$ as for topologically simple Liouville domains, we have $c_1(\xi) = 0$.

\item A contact manifold admits a periodic Reeb orbit $\gamma$ which is contractible \emph{in} $W$ if there is a Liouville filling with vanishing symplectic homology; see \cite[Section 4]{Vit}. If the inclusion $i: \p W \rightarrow W$ is $\pi_1$-injective, it follows that $\gamma$ is also contractible \emph{in} $\p W$. Notice that this makes the notion of dynamical convexity nontrivial. 

%See also \cite{HWZ}.
\end{enumerate}
\end{remark}

\begin{remark}\label{rem: whyc1xi2}\
\begin{enumerate}
\item There are several related notions of `dynamical convexity'. Our definition is the same as the one in \cite[Definition 4.9]{GiGu20}, and a more general definition using contact homology can be found in \cite[Section 2.1]{AbMa17Ann}. 

Other closely related but weaker versions of dynamical convexity appear in \cite{CiOa18, Laz20, Ue19}. It requires that $\CZ(\gamma) > 3-n$ for any contractible Reeb orbits $\gamma$, which morally means that the contact homology is trivial in negative degrees. This notion is also referred to as \emph{index-positivity} of contact manifolds as in \cite[Section 9.5]{CiOa18}.

\item  By the work of Hofer--Wysocki--Zehnder \cite{HWZ} (see also \cite[Theorem 4.10]{GiGu20}), geometrically convex domains in $\R^{2n}$ are dynamically convex, but the converse is not true as discovered in \cite{ChEd22, DaGuZh22}.
\end{enumerate}
\end{remark}

In our applications, we will mainly deal with Morse--Bott Liouville domains, i.e. the case when the induced contact form on the boundary is Morse--Bott. For Morse--Bott contact forms, the dynamical convexity can be understood in terms of another extension of the Conley--Zehnder index for degenerate orbits: Denote the Morse--Bott family of period $T$ by
$$
\Sigma_T : = \{z \in \Sigma \;|\; \phi_{\alpha}^T(z) = z\}.
$$
To each possibly degenerate Reeb orbit $\gamma$ we can associate an index $\RS(\gamma) \in \frac{1}{2}\Z$, called the \emph{Robbin--Salamon index}, introduced in \cite{RS}. Every member $\gamma$ of a Morse--Bott family $\Sigma_T$ has the same Robbin--Salamon index by the homotopy property in \cite[Theorem 2.4]{RS}, and we denote it by $\RS(\Sigma_T)$.
%\begin{proposition}\label{prop: RSlowerbound}
%Let $\alpha$ be a Morse--Bott contact form on a closed contact manifold $\Sigma$ of dimension $2n-1$. 
%\begin{enumerate}
%\item If $\RS(\Sigma_T) \geq 2n$ for every Morse--Bott family $\Sigma_T$, then $\alpha$ is dynamically convex.

%\item Suppose that the Reeb flow of $\alpha$ is in addition periodic and is dynamically convex. Then $\RS(\Sigma_{T_P}) \geq 2n$ where $T_P$ is a common period.
%\end{enumerate}
%\end{proposition}

\begin{proposition}\label{prop: RSlowerbound}
Let $\alpha$ be a Morse--Bott contact form on a closed contact manifold $\Sigma$ of dimension $2n-1$. Then $\alpha$ is dynamically convex if and only if
$$
\RS(\Sigma_T) - \frac{1}{2}(\dim \Sigma_T - 1) \geq  n+1 
$$
for any Morse--Bott family $\Sigma_T$.
\end{proposition}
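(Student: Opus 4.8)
The proof should rest on comparing two perturbations of a Morse--Bott contact form: the ``standard'' perturbation à la Bourgeois that turns each Morse--Bott family $\Sigma_T$ into finitely many nondegenerate orbits, and the abstract characterization of $\CZ_-$ as an infimum over nondegenerate nearby perturbations. Concretely, I would argue as follows. Pick a Morse--Bott family $\Sigma_T$ of dimension $\dim\Sigma_T$; a small perturbation of $\alpha$ supported near $\Sigma_T$ produces periodic orbits of period close to $T$, one for each critical point of an auxiliary Morse function on the quotient of $\Sigma_T$ by the $S^1$-action (or on $\Sigma_T$ itself if the action is free; in general one uses a Morse--Bott function and the associated index data). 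The key index formula — this is exactly the content underlying \cite[Lemma 3.4]{BO_MB} and Remark \ref{rem: MBpertadmHamcorresp} on the Reeb side — says that the Conley--Zehnder indices of the perturbed orbits coming from $\Sigma_T$ all lie in the range
$$
\left[\ \RS(\Sigma_T) - \tfrac12\bigl(\dim\Sigma_T - 1\bigr),\ \ \RS(\Sigma_T) + \tfrac12\bigl(\dim\Sigma_T - 1\bigr)\ \right],
$$
with both endpoints realized, the lower endpoint corresponding to the index-$0$ critical point (minimum). This is the standard ``$\RS$ minus half the reduced dimension'' shift; the $-1$ accounts for the $S^1$-symmetry of the Reeb orbit.

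Granting this index range, the proposition is essentially a tautology about infima. First I would prove the ``if'' direction: assume $\RS(\Sigma_T) - \tfrac12(\dim\Sigma_T-1)\ge n+1$ for every Morse--Bott family. Take any contractible periodic orbit $\gamma$; it belongs to some $\Sigma_T$, and by definition $\CZ_-(\gamma)$ is the infimum of $\CZ$ over nondegenerate perturbations. Since every sufficiently small nondegenerate perturbation produces orbits near $\gamma$ whose $\CZ$-indices are bounded below by $\RS(\Sigma_T) - \tfrac12(\dim\Sigma_T-1)$ (I would want to invoke that the lower Conley--Zehnder index is computed by, or at least bounded below by, the Bourgeois-type perturbation — this is where one cites the precise definition in \cite{GiGu20, booklong}), we get $\CZ_-(\gamma)\ge n+1$, i.e. dynamical convexity. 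For the converse, suppose some family $\Sigma_T$ violates the inequality; then the minimum critical point yields, under the standard perturbation, a nondegenerate orbit with $\CZ < n+1$, and since $\CZ_-$ is an infimum over all nondegenerate perturbations, $\CZ_-$ of any orbit in $\Sigma_T$ is $\le$ that value, contradicting dynamical convexity. One subtlety to address: $\Sigma_T$ and its perturbed orbits must be \emph{contractible in $\Sigma$} for this to bear on Definition \ref{def: DC}; but perturbed orbits are $C^0$-close to members of $\Sigma_T$, so contractibility is preserved, and it suffices to restrict attention to families $\Sigma_T$ containing a contractible orbit (equivalently, one should read the displayed inequality as quantified over such families, or note that $c_1(\xi)=0$ plus the setup makes the statement as written the intended one).

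The genuinely delicate point — and the step I expect to consume the most care — is pinning down the exact relationship between $\CZ_-$ (defined abstractly as an infimum of $\CZ$ over \emph{all} nearby nondegenerate contact forms) and the indices produced by the \emph{specific} Bourgeois perturbation. A priori, an arbitrary perturbation could split $\Sigma_T$ differently, or even create orbits not close to $\Sigma_T$ at all; one needs that for small enough perturbations the new short-period orbits are all near the old Morse--Bott families and their $\CZ$-indices are controlled by the $\RS$-index of the family they perturb from. This is precisely the content of the index continuity / Robbin--Salamon homotopy property \cite[Theorem 2.4]{RS} combined with the spectral-flow description of $\CZ$, and is handled in the references \cite{GiGu20, booklong}; I would cite the relevant lemma there (the statement that $\CZ_-(\gamma) = \RS(\Sigma_T) - \tfrac12(\dim\Sigma_T - 1)$ for $\gamma$ in a Morse--Bott family, or the inequality version of it) and then the proposition follows immediately by the infimum argument above. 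A secondary technical point is making sure the ``$-1$'' (as opposed to a ``$-\tfrac12$'' or ``$+\tfrac12$'' off-by-normalization) is correct; I would sanity-check it against the model case of the standard contact sphere $(S^{2n-1},\alpha_{\st})$, where $\Sigma_{T_P} = S^{2n-1}$ has $\dim\Sigma_{T_P} = 2n-1$, the principal family has $\RS = 2n$ (for the $T_P$-orbits), and $2n - \tfrac12(2n-2) = n+1$, matching dynamical convexity with equality — exactly the borderline behavior one expects of the convex model.
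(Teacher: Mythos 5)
Your proposal is correct and takes essentially the same route as the paper: both directions rest on the same Bourgeois-perturbation index range $[\RS(\Sigma_T) - \tfrac12(\dim\Sigma_T-1),\, \RS(\Sigma_T) + \tfrac12(\dim\Sigma_T-1)]$, the easy direction uses that the Bourgeois orbit at the Morse minimum realizes the lower endpoint (so $\CZ_-\le$ that quantity, forcing the inequality when $\alpha$ is dynamically convex), and the harder direction needs the same lower bound for \emph{arbitrary} $C^2$-close nondegenerate perturbations. You correctly isolate that last point as the genuinely delicate step; the paper resolves it by citing McLean's comparison of Conley--Zehnder indices under $C^2$-perturbations \cite[Lemma 4.10]{Mc16} rather than a statement from \cite{GiGu20, booklong}, but the logical content (and the resulting identity $\CZ_-(\gamma) = \RS(\Sigma_T) - \tfrac12(\dim\Sigma_T-1)$, which the paper records separately in Remark \ref{rem: lCZandthelowerRS}) is exactly what you describe.
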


\begin{proof}
Let $\Sigma_T$ be a Morse--Bott family of period $T$. The standard perturbation method in \cite[Section 2.2]{Bo02} yields that the Morse--Bott family $\Sigma_T$ splits into nearby nondegenerate Reeb orbits, and the minimum of Conley--Zehnder indices of them is given by
$$
\RS(\Sigma_T) - \frac{1}{2}(\dim \Sigma_T - 1).
$$
It follows that $\RS(\Sigma_T) - \frac{1}{2}(\dim \Sigma_T - 1) \geq n+1$ if $\alpha$ is dynamically convex.

Conversely, assume that $\RS(\Sigma_T) - \frac{1}{2}(\dim \Sigma_T - 1) \geq n+1$ for any Morse--Bott family $\Sigma_T$. The behavior of the Robbin--Salamon index (and the Conley--Zehnder index) under perturbations of contact forms is studied in \cite[Section 4]{Mc16}; see Remark \ref{rem: McLeanusesCZ}. In particular, by the relationship on Conley--Zehnder indices under $C^2$-perturbations in \cite[Lemma 4.10]{Mc16}, it follows that $C^2$-close nondegenerate perturbations $\tilde \gamma$ nearby $\Sigma_T$ have Conley--Zehnder index 
$$
\CZ(\tilde \gamma) \geq \RS(\Sigma_T) - \frac{1}{2}(\dim \Sigma_T - 1).
$$
The lower Conley--Zehnder index is given by the infimum of the Conley--Zehnder index of nearby nondegenerate perturbations, so it follows that
$$
\CZ_-(\gamma) \geq \RS(\Sigma_T) - \frac{1}{2}(\dim \Sigma_T - 1) \geq n+1
$$
for any Reeb orbit $\gamma$ of $(\Sigma, \alpha)$. Therefore $\alpha$ is dynamically convex.
\end{proof}

\begin{remark}\label{rem: McLeanusesCZ}
For the sake of clarification, we remark that the Robbin--Salamon index for degenerate periodic Reeb orbits is referred to as the Conley--Zehnder index in \cite[Section 4.3]{Mc16}.
\end{remark}

\begin{remark}\label{rem: lCZandthelowerRS}
The above proof actually shows that, if $\gamma$ is a member of a Morse--Bott family $\Sigma_T$, then
$$
\CZ_-(\gamma) = \RS(\Sigma_T) - \frac{1}{2}(\dim \Sigma_T - 1).
$$
\end{remark}

The following is a direct consequence of Proposition \ref{prop: RSlowerbound}.

\begin{corollary}\label{cor: RSlowerbound}
Let $\alpha$ be a Morse--Bott contact form on a closed contact manifold $\Sigma$ of dimension $2n-1$. 
\begin{enumerate}
\item If $\RS(\Sigma_T) \geq 2n$ for every Morse--Bott family $\Sigma_T$, then $\alpha$ is dynamically convex.

\item Suppose that the Reeb flow of $\alpha$ is in addition periodic and is dynamically convex. Then $\RS(\Sigma_{T_P}) \geq 2n$ where $T_P$ is a common period.
\end{enumerate}
\end{corollary}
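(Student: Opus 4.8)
The plan is to deduce both parts directly from Proposition \ref{prop: RSlowerbound}, the only extra input being the trivial observation that each Morse--Bott family $\Sigma_T \subset \Sigma$ is a submanifold, so that $\dim \Sigma_T \leq \dim \Sigma = 2n-1$ for every $T$.

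For part (1), I would assume $\RS(\Sigma_T) \geq 2n$ for every Morse--Bott family $\Sigma_T$ and verify the hypothesis of Proposition \ref{prop: RSlowerbound}. Since $\dim \Sigma_T \leq 2n-1$, one has $\tfrac12(\dim \Sigma_T - 1) \leq n-1$, hence
$$
\RS(\Sigma_T) - \tfrac12(\dim \Sigma_T - 1) \geq 2n - (n-1) = n+1,
$$
and Proposition \ref{prop: RSlowerbound} yields dynamical convexity of $\alpha$. For part (2), I would use that a periodic Reeb flow of common period $T_P$ satisfies $\phi_\alpha^{T_P} = \id_\Sigma$, so $\Sigma_{T_P} = \Sigma$ and $\dim \Sigma_{T_P} = 2n-1$. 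Applying Proposition \ref{prop: RSlowerbound} to the dynamically convex form $\alpha$ with this single family gives
$$
\RS(\Sigma_{T_P}) \geq n+1 + \tfrac12\bigl((2n-1)-1\bigr) = n+1 + (n-1) = 2n,
$$
which is the claim.

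There is no genuine obstacle here; the statement is bookkeeping on top of Proposition \ref{prop: RSlowerbound}. The only point worth flagging is that the bound $\RS \geq 2n$ in part (1) is merely \emph{sufficient} and not equivalent to dynamical convexity: when $\dim \Sigma_T < 2n-1$ the sharp threshold $n+1+\tfrac12(\dim\Sigma_T-1)$ coming from Proposition \ref{prop: RSlowerbound} is strictly below $2n$, so families of positive-dimensional but non-maximal dimension satisfy the dynamical convexity inequality under a weaker Robbin--Salamon bound. In the periodic case of part (2), however, the unique family has maximal dimension $2n-1$, so the two thresholds coincide and the bound $\RS(\Sigma_{T_P}) \geq 2n$ is exactly the dynamical convexity condition.
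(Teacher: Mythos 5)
Your proof is correct and is exactly the deduction the paper intends: the corollary is stated there as a direct consequence of Proposition \ref{prop: RSlowerbound}, using precisely the bound $\dim \Sigma_T \leq 2n-1$ for part (1) and the identity $\Sigma_{T_P} = \Sigma$ for part (2). Your closing remark that the threshold $2n$ is only sufficient (and sharp exactly when the family has maximal dimension) is accurate and consistent with the paper's usage.
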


\begin{example}\label{ex: ballexampleDC}
Consider the $2n$-dimensional closed ball $B^{2n}$ as a Liouville domain equipped with the standard Liouville form
$$
\lda_{\st} :  = \frac{i}{2}\sum_{j =1}^n (z_jd\overline z_j - \overline z_j d z_j)
$$
where $(z_1, \dots, z_n)$ is the complex coordinates in $\C^n$.
Denote the induced contact form on the boundary $\p B^{2n} = S^{2n-1}$ by
$$
\alpha_{\st} : = \lda_{\st}|_{S^{2n-1}}.
$$
The Reeb flow of $\alpha_{\st}$ is given by the coordinate-wise rotations
\begin{equation}\label{eq: perflowonsphere}
(z_1, \dots, z_n) \longmapsto (e^{it}z_1 \dots, e^{it}z_n).
\end{equation}
It follows that every Reeb orbit is periodic with the same period $2\pi$. In particular $\alpha_{\st}$ is of Boothby--Wang type and hence is degenerate. The Robbin--Salamon index of each Morse--Bott family $\Sigma_{2\pi m}$, $m \geq 1$, is given by
$$
\RS(\Sigma_{2 \pi m}) = 2n m \geq 2n.
$$
See e.g. \cite[Section 4.1.2]{Bou_Lec} or \cite[Section 3]{Oa04}. Therefore $\alpha_{\st}$ is dynamically convex by Corollary \ref{cor: RSlowerbound}.
\end{example}

\begin{remark}
There is also a nondegenerate dynamically convex contact form on the standard contact sphere $S^{2n-1}$. Let $a_1, \dots, a_n$ are rationally independent positive numbers. Define
$$
\alpha_a : = \frac{i}{2}\sum_{j =1}^n a_j(z_jd\overline z_j - \overline z_j d z_j).
$$
Due to the rational independence, $\alpha_a$ is nondegenerate, and direct computations of Conley--Zehnder indices show that it is dynamically convex. See \cite[Section 4.1.1]{Bou_Lec}.
\end{remark}

Later, we will use the following implications of dynamical convexity for filtered symplectic homology, which are fairly well-known; see \cite[Section 3.3.1]{Laz20} for a closely related observation.

\begin{proposition}\label{prop: vanishingeSHlowdeg}
Let $(W, \lda)$ be a nondegenerate dynamically convex Liouville domain with $c_1(TW) = 0$. Then the positive symplectic homology groups $\SH_k^{+, a}(W)$ and $\SH^{S^1, + , a}_{k} (W)$ are vanishing for $k < n+1$ and $a \in \R$. 
\end{proposition}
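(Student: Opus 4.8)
The plan is to reduce everything to a chain-level statement: dynamical convexity forces the positive (equivariant) symplectic chain complex of an admissible Hamiltonian to be supported in degrees $\geq n+1$, and since the filtered positive (equivariant) symplectic homology groups arise as direct limits of homologies of these complexes, the vanishing is then immediate.

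I would begin with the $S^1$-equivariant groups. Since $W$ is nondegenerate, the closed Reeb orbits of $\alpha=\lambda|_{\partial W}$ are isolated and nondegenerate, so for a time-independent admissible Hamiltonian $H$ of slope $\mathfrak a$ the nonconstant $1$-periodic orbits of $X_H$ organize into Morse--Bott $S^1$-families, one for each closed Reeb orbit of period $<\mathfrak a$ (Remark \ref{rem: MBpertadmHamcorresp}). In the equivariant Morse--Bott model used in the paper (cf.\ \cite{BO_Gysin, Gutt}), after a nondegenerate perturbation each good contractible closed Reeb orbit $\gamma$ contributes a single generator to the filtered positive $S^1$-equivariant Floer complex, placed in the degree dictated by its Conley--Zehnder index $\CZ(\gamma)$ --- this is exactly the indexing behind the Reeb orbit property of the Gutt--Hutchings capacities recorded in Proposition \ref{prop: axiomaticproportyofc_i}(2). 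Only contractible $1$-periodic orbits enter the complex, so the relevant Reeb orbits are contractible and subject to dynamical convexity: by Definition \ref{def: DC} together with nondegeneracy, $\CZ(\gamma)=\CZ_-(\gamma)\geq n+1$. Hence the filtered positive $S^1$-equivariant Floer complex of $H$ lives in degrees $\geq n+1$, so does its homology, and passing to the direct limit over slopes gives $\SH^{S^1,+,a}_k(W)=0$ for all $k<n+1$ and all $a\in\R$.

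For the non-equivariant groups the same chain-level reasoning applies directly: by Remark \ref{rem: MBpertadmHamcorresp} a closed contractible Reeb orbit $\gamma$ contributes to the positive non-equivariant Floer complex exactly the pair $\check\gamma,\hat\gamma$, whose degrees differ by one and are both $\geq\CZ(\gamma)\geq n+1$, so the whole complex is supported in degrees $\geq n+1$. Alternatively, one may deduce it formally from the equivariant statement via the Gysin exact sequence (item (3) of Section \ref{sec: eSH}), which is compatible with the action filtration: in the piece
$$
\cdots \to \SH^{S^1,+,a}_{k-1}(W) \xrightarrow{\delta} \SH^{+,a}_k(W) \xrightarrow{g^+} \SH^{S^1,+,a}_k(W) \to \cdots
$$
both outer terms vanish when $k<n+1$, forcing $\SH^{+,a}_k(W)=0$.

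The one step demanding genuine care is the index bookkeeping above: one must reconcile the grading convention $|\gamma|=-\CZ(\gamma)$ on Floer generators (Remark \ref{rem: MBpertadmHamcorresp}) with the Conley--Zehnder normalization for Reeb orbits underlying Definition \ref{def: DC}, and check that a closed Reeb orbit of index $d$ can contribute to the positive (equivariant) Floer complex only in degrees $\geq d$. Everything else is formal; for a closely related computation see \cite[Section 3.3.1]{Laz20}.
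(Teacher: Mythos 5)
Your argument follows the paper's proof in all essentials: for the equivariant groups you invoke the correspondence between generators and closed Reeb orbits, then apply dynamical convexity; for the non-equivariant groups you give both the direct argument via $\check\gamma,\hat\gamma$ and the Gysin exact sequence (the paper uses the Gysin sequence in the proof and mentions the direct route in a following remark). One small imprecision worth flagging: in the Bourgeois--Oancea/Gutt equivariant model a closed Reeb orbit $\gamma$ does \emph{not} contribute a single generator in degree $\CZ(\gamma)$; it contributes a whole tower of generators in degrees $\CZ(\gamma)+2m$, $m\geq 0$ (this is exactly the fact from \cite[p.~3587]{GH18} that the paper quotes). Your ``single generator'' phrasing in the opening paragraph contradicts this, but your final ``care'' paragraph already states the correct and sufficient fact---that an orbit of index $d$ contributes to the positive equivariant complex only in degrees $\geq d$---so the overall argument is sound; just replace ``a single generator placed in degree $\CZ(\gamma)$'' with ``generators in degrees $\CZ(\gamma)+2m$, $m\geq 0$.''
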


\begin{proof}
As in \cite[p. 3587]{GH18}, each nontrivial generator of the positive equivariant symplectic homology $\SH^{S^1, +, a}_k(W)$ corresponds to a Reeb orbit $\gamma$ with $\CZ(\gamma) = k - 2m$ for some integer $m \geq 0$. Since $\CZ(\gamma) \geq n+1$ by the dynamical convexity, it follows that
$$
k \geq \CZ(\gamma) \geq n+1.
$$
Therefore $\SH^{S^1, +, a}_k(W) = 0$ for $k < n+1$.

As for the non-equivariant case, consider the following filtered Gysin exact sequence in symplectic homology \cite{BO_Gysin, CiOa18}:
\begin{equation}\label{eq: GysininSH}
\rightarrow \SH^{S^1, + , a}_{k-1} (W) \rightarrow \SH_k^{+, a}(W) \rightarrow \SH^{S^1, + , a}_{k} (W) \rightarrow \SH^{S^1, + , a}_{k-2} (W) \rightarrow  \SH_{k-1}^{+, a}(W) \rightarrow
\end{equation}
It follows directly from the exactness that
$$
\SH^{+, a}_k(W) = 0
$$
for $k < n+1$.
\end{proof}

\begin{remark}
In the above proof, one can directly show that $\SH^{+, a}_k(W) = 0$, instead of applying the Gysin exact sequence, by using the correspondence between the generators of $\SH^{+, a}(W)$ and Reeb orbits on the boundary $(\p W, \alpha)$. See Remark \ref{rem: MBpertadmHamcorresp} and \cite[Section 3.3.1]{Laz20}.
\end{remark}

%This means that, if $\tilde \alpha$ is a $C^2$-close nondegenerate perturbation of $\alpha$, then periodic Reeb orbits of $\tilde \alpha$ has Conley--Zehnder index $\geq n+1$. Let $(\widetilde W, \widetilde \lda)$ be the graphical subdomain in $\widehat W$ with respect to $\tilde \alpha$. Note that the generators of the chain complexes for the symplectic homology $\SH_*^{+}(\widetilde W) \cong \SH_*^{+}(W)$ and $\SH^{S^1, +}_*(\widetilde W) \cong \SH^{S^1, +}_*(W)$ correspond to periodic Reeb orbits of $\tilde \alpha$; see e.g. [Uebele, Section 3.1] and [Gutt, Corollary 3.7]. Therefore the degree of any generator is no less than $n+1$. It follows that $\SH_k^{+}(W)$ and $\SH^{S^1, +}_k(W)$ vanish for $k < n+1$, and so are filtered ones.

%\begin{remark}\label{rem: GGDCrelation}
%A closely related notion of dynamically convex contact forms is studied e.g. in [GG], [Abreu--Macarini]. If a Morse--Bott contact form $\alpha$ is dynamically convex with respect to the Morse--Bott perturbation, then it follows that
%$$
%\CZ(\gamma) = \RS(\gamma) - \frac{1}{2} \dim \ker (d\phi_{\alpha}^T|_{\xi_{\gamma(0)}} - \id) \geq n+1
%$$
%for any periodic Reeb orbit $\gamma$ of period $T$. In this case $\alpha$ is ``dynamically convex'' in the sense of [GG] and [Abreu--Macarini].
%\end{remark}

%\begin{definition}
%A contact manifold $(\Sigma, \xi)$ is \emph{asymptotically dynamically convex} if it admits a dynamically convex contact form for $\xi$.
%\end{definition}

\section{Contactomorphism type of dynamically convex contact manifolds}\label{sec: ThmA}

In this section we give a proof of Theorem \ref{thm: result1}. We start with the following observation on the relative homology group.

\begin{proposition}\label{prop: homologyball}
    Let $(W, \lda)$ be a nondegenerate and dynamically convex Liouville domain with $c_1(TW) = 0$, and suppose that $\SH_*(W) = 0$. Then $\Ho_*(W, \p W) \cong \Ho_*(B^{2n}, \p B^{2n})$ over any field $\F$.
\end{proposition}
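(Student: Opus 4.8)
The plan is to use the tautological long exact sequence \eqref{eq: tau_LES_Ho} together with the vanishing $\SH_*(W) = 0$ and Proposition \ref{prop: vanishingeSHlowdeg}. Setting $\SH_*(W) = \varinjlim_a \SH_*^a(W) = 0$, the sequence \eqref{eq: tau_LES_Ho}, in the limit $a \to \infty$, produces isomorphisms $\SH_k^+(W) \xrightarrow{\iota} \Ho_{k+n-1}(W, \p W)$ coming from the connecting map, i.e.
$$
\Ho_{k+n}(W, \p W) \xrightarrow{j} \SH_k(W) = 0 \longrightarrow \SH_k^+(W) \xrightarrow{\iota, \;\cong} \Ho_{k+n-1}(W, \p W) \xrightarrow{j} \SH_{k-1}(W) = 0.
$$
So $\Ho_{k+n-1}(W, \p W) \cong \SH_k^+(W)$ for all $k$. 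By Proposition \ref{prop: vanishingeSHlowdeg} (taking the direct limit over $a$), $\SH_k^+(W) = 0$ for $k < n+1$, hence $\Ho_j(W, \p W) = 0$ for $j + 1 < 2n$, i.e. for all $j \leq 2n - 2$.

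Next I would pin down the top-degree groups $\Ho_{2n}(W, \p W)$ and $\Ho_{2n-1}(W, \p W)$. Since $W$ is a compact connected $2n$-manifold with nonempty boundary, $\Ho_{2n}(W, \p W; \F) \cong \F$ (Lefschetz/Poincaré--Lefschetz duality, $\Ho_{2n}(W, \p W) \cong \Ho^0(W) = \F$, using that $W$ is orientable because $c_1(TW) = 0$ forces $w_1 = 0$; alternatively orientability follows from $W$ being a Liouville domain, hence symplectic). For $\Ho_{2n-1}(W, \p W)$: by duality $\Ho_{2n-1}(W, \p W; \F) \cong \Ho^1(W; \F)$, and $\Ho^1(W; \F)$ is dual to $\Ho_1(W; \F)$. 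One expects this to vanish; the cleanest route is to feed $k = n+1$ into the displayed isomorphism to get $\Ho_{2n}(W, \p W) \cong \SH_{n+1}^+(W)$ — wait, that gives $\SH_{n+1}^+(W) \cong \F$ — and then handle $\Ho_{2n-1}(W, \p W) \cong \SH_n^+(W)$, which is not immediately covered by the range $k < n+1$ of Proposition \ref{prop: vanishingeSHlowdeg} since $k = n$ is exactly on the boundary — actually $n < n+1$, so Proposition \ref{prop: vanishingeSHlowdeg} \emph{does} give $\SH_n^+(W) = 0$, hence $\Ho_{2n-1}(W, \p W) = 0$. Thus the only nonvanishing group is $\Ho_{2n}(W, \p W) \cong \F$, which is exactly $\Ho_*(B^{2n}, \p B^{2n})$.

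To assemble this cleanly I would: (i) record that $W$ is orientable so that Poincaré--Lefschetz duality applies with $\F$-coefficients and $\Ho_{2n}(W, \p W; \F) \cong \F$; (ii) take direct limits in \eqref{eq: tau_LES_Ho} and in Proposition \ref{prop: vanishingeSHlowdeg} to get $\SH_k^+(W) = 0$ for $k \leq n$; (iii) use the long exact sequence with $\SH_*(W) = 0$ to deduce $\Ho_{k+n-1}(W, \p W) \cong \SH_k^+(W)$ for every $k$, hence $\Ho_j(W, \p W) = 0$ for $j \leq 2n - 2$; (iv) combine with (i) for degree $2n$; (v) observe $\Ho_j(W, \p W) = 0$ for $j < 0$ and $j > 2n$ trivially. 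Comparing with $\Ho_*(B^{2n}, \p B^{2n})$, which is $\F$ concentrated in degree $2n$, finishes the proof.

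I do not expect a serious obstacle here; this is essentially a formal consequence of the machinery already set up in Section \ref{sec: SH} and Section \ref{sec: DC}. The one point that needs a little care is justifying that the direct limit is compatible with the exact sequences — i.e. that $\varinjlim_a$ of \eqref{eq: tau_LES_Ho} is again exact (direct limits are exact) and that $\varinjlim_a \SH_k^{+,a}(W) = \SH_k^+(W)$ — and making sure the degree bookkeeping in the tautological sequence is correct so that the vanishing range $k \leq n$ translates to $\Ho_{\leq 2n-1}(W, \p W) = 0$. A secondary minor point is the orientability of $W$, but this is immediate from it being symplectic. Everything else is standard homological algebra plus Poincaré--Lefschetz duality.
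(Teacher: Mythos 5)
Your proof is correct and follows essentially the same route as the paper's: take the tautological long exact sequence with $\SH_*(W)=0$ to get $\SH_k^+(W)\cong \Ho_{k+n-1}(W,\p W)$, invoke Proposition~\ref{prop: vanishingeSHlowdeg} to kill $\SH_k^+(W)$ for $k\leq n$ (hence $\Ho_j(W,\p W)=0$ for $j\leq 2n-1$), and identify the top group with $\F$ via the fundamental class. One small bookkeeping slip: in your summary step (iii) you write ``$\Ho_j(W,\p W)=0$ for $j\leq 2n-2$,'' but as you correctly observe earlier in the argument, $k=n<n+1$ is in the vanishing range, so the conclusion is $\Ho_j(W,\p W)=0$ for $j\leq 2n-1$, which is exactly what is needed.
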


\begin{proof}
    By the long exact sequence in Proposition \ref{prop: lesinSH} with $a = -\infty, b = \epsilon, c= \infty$, it follows from the assumption $\SH_*(W) = 0$ that
    \begin{equation} \label{eq: pSHandHo}
        \SH_*^{+}(W) \cong \Ho_{*+n-1}(W, \p W).
    \end{equation}
    Here we applied the identification \eqref{eq: SHandHo}. Since $(\p W, \alpha)$ is dynamically convex, it follows that
    \[
\SH_k^{+}(W) = 0
    \]
    for $k < n+1$ by Proposition \ref{prop: vanishingeSHlowdeg}. Therefore $ \Ho_k(W, \p W) = 0$ for $k < 2n$. Note that $W$ is a compact manifold with boundary, so we know that $\Ho_{2n}(W, \p W) \cong \F$, generated by the fundamental class. Summing up, we conclude that
    $$
    \Ho_*(W, \p W) \cong \Ho_*(B^{2n}, \p B^{2n})
    $$
    as asserted.
\end{proof}

\begin{lemma} \label{lem: homologyball}
Let $(W, \lda)$ be a dynamically convex Liouville domain with $c_1(TW)=0$ and $\SH_*(W; \F) =0$ for any field $\F$. Then $W$ is a homology ball over $\Z$.
\end{lemma}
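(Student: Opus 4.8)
The plan is to upgrade the field-coefficient statement of Proposition \ref{prop: homologyball} to an integral statement using the fact that the relative homology $\Ho_*(W, \p W)$ is finitely generated (since $W$ is a compact manifold with boundary). First I would recall that Proposition \ref{prop: homologyball} already gives $\Ho_*(W, \p W; \F) \cong \Ho_*(B^{2n}, \p B^{2n}; \F)$ for every field $\F$; in particular $\Ho_k(W, \p W; \F) = 0$ for all $k \neq 2n$ and $\Ho_{2n}(W, \p W; \F) \cong \F$. The strategy is to use the universal coefficient theorem together with the knowledge that these vanishings hold for $\F = \Q$ and for $\F = \Z_p$ for every prime $p$.

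Here are the steps. For $0 < k < 2n$: since $\Ho_k(W, \p W; \Q) = 0$, the group $\Ho_k(W, \p W; \Z)$ has no free part, so it is finite torsion; and since $\Ho_k(W, \p W; \Z_p) = 0$ for every prime $p$, the universal coefficient theorem $\Ho_k(W, \p W; \Z_p) \cong \Ho_k(W, \p W; \Z) \otimes \Z_p \oplus \mathrm{Tor}(\Ho_{k-1}(W, \p W; \Z), \Z_p)$ forces $\Ho_k(W, \p W; \Z) \otimes \Z_p = 0$ for every $p$, hence (being finitely generated) $\Ho_k(W, \p W; \Z)$ has no $p$-torsion for any $p$, so it vanishes. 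For $k = 2n$: $\Ho_{2n}(W, \p W; \Z)$ is finitely generated with $\Ho_{2n}(W, \p W; \Z) \otimes \Q \cong \Q$ and, since all lower torsion groups already vanish, $\Ho_{2n}(W, \p W; \Z) \otimes \Z_p \cong \Z_p$ for every $p$, which pins it down to $\Z$. For $k = 0$: $\Ho_0(W, \p W; \Z) = 0$ as long as $\p W \neq \emptyset$, which holds since $W$ is a Liouville domain. Then, to pass from $\Ho_*(W, \p W; \Z)$ to $\Ho_*(W; \Z)$, I would invoke Lefschetz duality $\Ho_k(W, \p W; \Z) \cong \Ho^{2n-k}(W; \Z)$, so $\Ho^j(W; \Z) = 0$ for $j \neq 0$ and $\Ho^0(W; \Z) \cong \Z$ (note $W$ is connected, being a Liouville domain). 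Finally one more application of the universal coefficient theorem (for cohomology) or a second duality-free argument converts the vanishing of $\Ho^*(W; \Z)$ back into $\Ho_*(W; \Z) \cong \Ho_*(\pt; \Z)$, i.e. $W$ is a homology ball over $\Z$.

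One small wrinkle is that the hypothesis of Lemma \ref{lem: homologyball} drops the nondegeneracy assumption that appears in Proposition \ref{prop: homologyball}, so I would first note that a generic $C^2$-small perturbation of the contact form makes $(\p W, \alpha)$ nondegenerate while preserving both dynamical convexity and the vanishing of $\SH_*(W; \F)$ (the latter is a diffeomorphism invariant of $W$, unchanged by deforming the boundary contact form through the completion), so Proposition \ref{prop: homologyball} applies after perturbation; alternatively one observes the relative homology of $W$ is insensitive to such a perturbation. The main obstacle, such as it is, is purely bookkeeping in the universal coefficient / duality chase — making sure the finite-generation hypothesis is used correctly so that "trivial after tensoring with $\Z_p$ for all $p$ and with $\Q$" really does imply "trivial over $\Z$", and handling the top degree $2n$ carefully. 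There is no hard geometry here; the content is entirely in the already-established Proposition \ref{prop: homologyball}.
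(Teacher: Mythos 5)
Your argument is correct and takes essentially the same route as the paper: invoke Proposition \ref{prop: homologyball} over every field and then upgrade to $\Z$ via the universal coefficient theorem (the paper simply cites Hatcher, Exercise 3.A.3, compressing the finite-generation/duality bookkeeping that you spell out). Your additional remark about perturbing away the nondegeneracy hypothesis of Proposition \ref{prop: homologyball} addresses a point the paper passes over silently, but it does not change the approach.
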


\begin{proof}
    By Proposition \ref{prop: homologyball} we know that
    \[
    \Ho_*(W, \p W; \F) \cong \Ho_*(B^{2n}, \p B^{2n}; \F)
    \]
    for any field $\F$.
    In this case the universal coefficient theorem yields
    \[
    \Ho_*(W) \cong \Ho_*(B^{2n})
    \]
 over $\Z$. See for example \cite[Excercise 3.A.3]{Hat02}.
\end{proof}

\begin{remark}
It is also possible to work directly over $\Z$-coefficients in Propositions \ref{prop: vanishingeSHlowdeg} and \ref{prop: homologyball}. In this case Lemma \ref{lem: homologyball} will be superfluous for the proof of Theorem \ref{thm: result1} below.
\end{remark}

\begin{lemma}\label{lem: simply-conn}
    Let $W$ be a Weinstein domain of dimension $2n \geq 6$. If the boundary $\p W$ is simply-connected, then so is $W$.
\end{lemma}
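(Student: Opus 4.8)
The plan is to use the handle decomposition supplied by the Weinstein structure together with a general-position argument to kill the fundamental group. Recall that a Weinstein domain $W^{2n}$ admits a Morse function whose critical points all have index $\leq n$; equivalently, $W$ is built from a collar of $\p W$ (or, reading the cobordism the other way, $W$ is obtained from a $0$-handle by attaching handles of index $\leq n$). It is more convenient here to use the Morse theory of $W$ itself: $W$ is homotopy equivalent to a CW complex of dimension $\leq n$, so $W$ collapses onto an $n$-complex $K$, and in particular the inclusion induces an isomorphism $\pi_1(W)\cong\pi_1(K)$, which is generated by the $1$-handles.

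First I would set up the "upside down" handle decomposition relative to the boundary. Turning the Weinstein Morse function upside down exhibits $W$ as $\p W \times [0,1]$ with handles of index $\geq n$ attached; dually, the cobordism from $\p W$ to $\emptyset$ (capping $W$ off, conceptually) is built with handles of index $\leq n$. The cleanest route: since $W$ deformation retracts onto an $n$-dimensional complex, consider the "belt sphere" picture — actually the argument I want is the standard one that for a cobordism $(W;\p W,\emptyset)$ built with handles of index $\le n$ on top of $\p W \times I$, when $2n \geq 6$ one can cancel or slide handles so as to arrange $\pi_1$ surjectivity of $\p W \hookrightarrow W$; combined with the hypothesis $\pi_1(\p W)=1$ this forces $\pi_1(W)=1$. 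Concretely: the inclusion $\p W \to W$ is always $\pi_1$-surjective when $W$ is obtained from $\p W\times I$ by attaching handles of index $\geq 2$ — because attaching a handle of index $\geq 2$ does not change $\pi_1$ (index $\ge 3$) or only adds a relation (index $2$), and dually here the handles attached to the $\p W$-end have index $\ge n \ge 3$ when $n\ge 3$, hence $\pi_1(\p W)\twoheadrightarrow \pi_1(W)$. Since $n \ge 3$ by the dimension hypothesis $2n \ge 6$, every handle in the upside-down decomposition has index $\geq n \geq 3$, so $\pi_1(\p W)\to\pi_1(W)$ is surjective; as $\pi_1(\p W)=1$, we conclude $\pi_1(W)=1$.

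I would therefore organize the write-up as: (i) invoke the Weinstein handle decomposition with handles of index $\leq n$, and pass to the dual decomposition of $W$ rel $\p W$, whose handles have index $\geq 2n-n = n$; (ii) observe $n\geq 3$; (iii) apply the elementary fact that attaching handles of index $\geq 3$ to a connected manifold-with-boundary along its boundary does not affect $\pi_1$ of the boundary's image — so the inclusion $\p W \hookrightarrow W$ is $\pi_1$-surjective; (iv) conclude. The one point requiring a little care — the main (mild) obstacle — is justifying the dual handle decomposition and the index count cleanly: a Weinstein domain deformation retracts to an isotropic, hence at most $n$-dimensional, skeleton, and turning the Morse function around sends index-$k$ critical points to index-$(2n-k)$ ones, so all dual handles have index $\geq n$. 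This is standard (see \cite[Chapter 12]{CiEl12}), and once it is in place the $\pi_1$ statement is purely elementary handle-attachment bookkeeping. I expect no genuine difficulty beyond stating these facts precisely; the dimension bound $2n\geq 6$ is used exactly to guarantee $n\geq 3$, which is what makes the index-$\geq 3$ handles invisible to $\pi_1$.
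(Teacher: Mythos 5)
Your proposal is correct and follows essentially the same route as the paper: read the Weinstein handle decomposition upside down so that $W$ is built from $\p W$ by attaching handles of index $\geq 2n-n=n\geq 3$, which leave $\pi_1$ unchanged, hence $\pi_1(W)=1$. The paper states exactly this in two sentences; your additional bookkeeping (surjectivity of $\pi_1(\p W)\to\pi_1(W)$) is fine but not needed beyond that observation.
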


\begin{proof}
The domain $W$ can be constructed from $\p W$ by attaching handles of index greater than or equal to $ n \geq 3$. This does not affect the fundamental group of $\p W$. Therefore $W$ is still simply-connected.
   %As discussed in \cite[Proposition 2.3]{Zh20} and \, the assumption on $W$ leads to the injectivity of the induced homomorphism $\pi_1(W) \rightarrow \pi_1(\partial W)$. Hence, if $\partial W$ is simply-connected, so is $W$.
\end{proof}

Now we give a proof of Theorem \ref{thm: result1}.

\begin{proof}[Proof of Theorem \ref{thm: result1}]
    Combining Lemmas \ref{lem: homologyball} and \ref{lem: simply-conn}, the $h$-cobordism theorem tells us that the Weinstein filling $W$ is diffeomorphic to the ball $B^{2n}$ and hence the boundary $\Sigma$ is diffeomorphic to the sphere $S^{2n-1}$.

    It is well-known that every almost complex structure $J$ on $B^{2n}$ is homotopic to the standard one $J_{\st}$. In this case the result on uniqueness of flexible Stein structure in \cite[Theorem 15.14]{CiEl12} shows that $(W, J, \phi)$ is in fact Stein homotopy equivalent to $(B^{2n}, J_{\st}, \phi_{\st})$. It follows that, with respect to the induced Liouville form $\lda : = - d \phi \circ J$, the Liouville domain $(W, \lda)$ is Liouville homotopy equivalent to $(B^{2n}, \lda_{st})$; see \cite[Definition 11.5]{CiEl12}. Consequently the Gray stability implies that the contact boundary $(\Sigma, \xi)$ is contactomorphic to the standard sphere $(S^{2n-1}, \xi_{\st})$.
\end{proof}

\begin{remark}
    By the theorem of Eliashberg--Floer--McDuff \cite{McD91}, every symplectically aspherical symplectic filling of the standard contact sphere $(S^{2n-1}, \xi_{\st})$, $n \geq 3$, is diffeomorphic to the ball $B^{2n}$. The proof of Theorem \ref{thm: result1} shows further that flexible Weinstein fillings of $(S^{2n-1}, \xi_{\st})$, $n \geq 3$, are unique up to Stein homotopy.
\end{remark}

\section{Periodicity in filtered symplectic homology} \label{sec: ThmB}

\subsection{Proof of Theorem \ref{thm: result4}} We start with the following observation on the symplectic homology capacity.

\begin{lemma} \label{lem: cSHforperflow}
Let $(W, \lda)$ be a Liouville domain with vanishing symplectic homology and $c_1(TW) = 0$. Suppose that the induced contact form $\alpha$ on the boundary yields a periodic Reeb flow of period $T_P$ and it is dynamically convex. Then the symplectic homology capacity satisfies
$$
c^{\SH}(W) \leq T_P.
$$
As a directly consequence, if we further assume that the Reeb flow is of Boothby--Wang type, then 
$$
c^{\SH}(W) = T_P.
$$
\end{lemma}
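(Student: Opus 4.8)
The goal is to show $c^{\SH}(W) \leq T_P$ under the stated hypotheses, with equality when the Reeb flow is Boothby--Wang. I would use the characterization of $c^{\SH}(W)$ via the positive symplectic homology from Remark~\ref{rem: eqdefcSH}: it suffices to produce, for every $a > T_P$, a class $c \in \SH^{+,a}_{n+1}(W)$ with $\iota^a(c) = [W, \p W]$. The natural source of such a class is the tautological long exact sequence \eqref{eq: tau_LES_Ho}. Since $\SH_*(W) = 0$, that sequence degenerates and gives an isomorphism $\SH^{+,a}_{n+1}(W) \cong \Ho_{2n}(W, \p W) \oplus (\text{possibly } \SH^a_{n+1}(W)\text{-contribution})$; more precisely, the connecting map $\iota^a \colon \SH^{+,a}_{n+1}(W) \to \Ho_{2n}(W, \p W) = \Z_2$ is surjective as soon as the map $j^a \colon \Ho_{2n}(W,\p W) \to \SH^a_{n+1}(W)$ kills the fundamental class, i.e. as soon as $a \geq c^{\SH}(W)$. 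So this line of reasoning is slightly circular unless we argue directly.

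**The key step.**
The real content is a chain-level computation showing that the fundamental class is already hit in action just above $T_P$. Here I would set up the Morse--Bott model for the periodic Reeb flow (Remark~\ref{rem: MBpertadmHamcorresp}): take a time-independent admissible Hamiltonian $H$ with a small slope $\mathfrak a$ slightly above $T_P$, whose $1$-periodic orbits fall into the constant orbits in the interior (contributing $\Ho_{*+n}(W, \p W)$, in action $< \epsilon$) together with two families $\hat\gamma, \check\gamma$ for each Reeb orbit of period $\leq \mathfrak a$. By dynamical convexity and Proposition~\ref{prop: vanishingeSHlowdeg}, every generator coming from a Reeb orbit sits in degree $\geq n+1$; in particular the only generators in degrees $\leq n$ are the interior Morse generators, so the portion of the complex in those degrees computes $\Ho_{*+n}(W, \p W)$ honestly and the fundamental class in degree $2n$, i.e. $\SH_n$, must die in $\SH_*(W) = 0$. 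Tracking where it dies: the differential must carry some degree-$(n+1)$ generator onto the fundamental generator, and that degree-$(n+1)$ generator corresponds (via the $\hat\gamma$/$\check\gamma$ splitting and the index shift $\CZ(\hat\gamma) = \CZ(\gamma)-1$) to a Reeb orbit of Conley--Zehnder index exactly $n+1$, of period $\leq \mathfrak a$. Since the orbits of period $\leq T_P$ that are "short" are exactly the simple Reeb orbits (period $T_P$ in the Boothby--Wang / periodic case, or fractions thereof only if there are shorter orbits — but periodicity forces the minimal period to divide $T_P$, and dynamical convexity together with $\RS(\Sigma_{T_P}) \geq 2n$ from Corollary~\ref{cor: RSlowerbound} pins down the index of the first family), this generator has action $\leq T_P + (\text{small})$. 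Hence $j^a[W,\p W] = 0$ for all $a > T_P$, giving $c^{\SH}(W) \leq T_P$.

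**The Boothby--Wang equality.**
For the lower bound $c^{\SH}(W) \geq T_P$ when $\alpha$ is Boothby--Wang, I would argue that there is simply nothing in the complex to kill the fundamental class below action $T_P$: in the Morse--Bott picture with slope $\mathfrak a < T_P$, the only $1$-periodic orbits are the interior Morse generators, so $\HF^a_*(H, J) \cong \Ho_{*+n}(W, \p W)$ for all $a < T_P$, and in particular $j^a[W, \p W] \neq 0$ there. Taking the direct limit over slopes $\mathfrak a < T_P$ preserves this. Therefore $c^{\SH}(W) = T_P$. (Alternatively, invoke the Reeb-orbit property for $c^{\SH}$ — it equals the action of some Reeb orbit — combined with the fact that the shortest Reeb orbit has period $T_P$; but the direct argument is cleaner and self-contained.)

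**Main obstacle.**
The delicate point is the index bookkeeping in the Morse--Bott complex: one must be sure that \emph{every} generator sitting in degrees $\leq n+1$ and capable of hitting the fundamental class is genuinely an $\hat\gamma$ (not $\check\gamma$) coming from a Reeb orbit of Conley--Zehnder index exactly $n+1$, and that such an orbit has period no larger than $T_P$ (not merely $\leq \mathfrak a$ for $\mathfrak a$ close to $T_P$). Ruling out spurious short orbits of low index is where dynamical convexity is used essentially — via $\CZ_- \geq n+1$ and, in the periodic case, via $\RS(\Sigma_{T_P}) \geq 2n$ from Corollary~\ref{cor: RSlowerbound}, which forces the first Morse--Bott family to have the expected index and hence the fundamental class to be killed exactly at action level $T_P$ (up to $\epsilon$). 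Making the "$a > T_P \Rightarrow$" statement precise as the slope tends to $T_P^+$ — i.e. controlling the action of the killing generator uniformly — is the step I expect to require the most care.
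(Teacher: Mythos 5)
The core difficulty you correctly identified — ruling out spurious degree-$(n+1)$ contributions from Reeb orbits of period strictly greater than $T_P$ — is where your argument breaks down, and it is precisely where the paper supplies an ingredient you do not have. Dynamical convexity by itself only gives $\CZ_-(\gamma) \geq n+1$ for \emph{every} orbit, including long ones; it does not prevent a Morse--Bott family $\Sigma_T$ with $T > T_P$ from having $\RS(\Sigma_T) - \tfrac{1}{2}(\dim\Sigma_T - 1) = n+1$ and therefore contributing a degree-$(n+1)$ generator of action $T > T_P$. Likewise, $\RS(\Sigma_{T_P}) \geq 2n$ from Corollary~\ref{cor: RSlowerbound} constrains only the family at period $T_P$, not its iterates or the iterates of shorter exceptional orbits. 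Your proposal never establishes the strict inequality that the paper labels \eqref{eq: minCZ>n+1}: for any Morse--Bott family $\Sigma_T$ with $T > T_P$,
\[
\RS(\Sigma_T) - \tfrac{1}{2}(\dim\Sigma_T - 1) > n+1,
\]
which the paper derives from the strict monotonicity of the lower Conley--Zehnder index under iteration for dynamically convex flows (\cite[Lemma 4.8]{GiGu20}): since the flow is periodic, every orbit of period $T > T_P$ is a proper iterate of an orbit whose earlier iterate lands in $\Sigma_{T_P}$, and strict monotonicity pushes its $\CZ_-$ strictly above $n+1$. Without this, you cannot conclude that the only generators in $\SH_{n+1}^{+,a}(W)$ for any $a$ have action $\leq T_P$, which is the whole point.

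There is a second, related gap: you assert that at a slope $\mathfrak a$ just above $T_P$ ``the differential must carry some degree-$(n+1)$ generator onto the fundamental generator,'' but $\SH(W)=0$ only guarantees the class dies in the \emph{direct limit}, not at any prescribed finite slope. The paper avoids this by arguing at the level of the filtered groups: once one knows every degree-$(n+1)$ generator has action $\leq T_P$, it follows that $\SH_{n+1}^{+,a}(W)$ stabilizes for $a > T_P$ and hence equals $\SH_{n+1}^{+}(W)$, and then the vanishing of $\SH(W)$ together with the tautological sequence \eqref{eq: tau_LES_Ho} gives a preimage of $[W,\p W]$ under $\iota^a$ already for $a$ just above $T_P$. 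Your Boothby--Wang lower bound $c^{\SH}(W) \geq T_P$ is fine (no nonconstant generators below action $T_P$, so $j^a$ is injective there), and your overall strategy of using the Morse--Bott chain model and the characterization of $c^{\SH}$ from Remark~\ref{rem: eqdefcSH} is sound; the missing step is the iteration lemma for $\CZ_-$, which the paper uses and which you would need to import to close the argument.
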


\begin{proof}
%Since the boundary $(\p W, \alpha)$ is a closed contact manifold with a periodic Reeb flow, we can arrange the spectrum as
%$$
%\Spec(\p W, \alpha) = \{T_1 < \cdots < T_k < T_{k+1} < \cdots \}
%$$
%where $T_k = T_P$.
%For a periodic Reeb orbit $\gamma$ of action $T$ we define
%$$
%K_{\gamma} : = \dim \ker (d\phi_R^T|_{\xi_{\gamma(0)}} - \id).
%$$
%Notice that 

We first claim that the strict inequality
\begin{equation}\label{eq: minCZ>n+1}
\RS(\Sigma_T) -\frac{1}{2}(\dim \Sigma_T  -1) > n+1
\end{equation}
holds for $T > T_P$. For this, we employ a property of lower Conley--Zehnder indices $\CZ_-(\gamma)$ of periodic Reeb orbits $\gamma$ from \cite[Lemma 4.8]{GiGu20}, namely, for dynamically convex Reeb flows the index $\CZ_-(\gamma)$ strictly increases under iteration i.e.
$$
\CZ_-(\gamma^k) < \CZ_-(\gamma^{k'})
$$
if $k < k'$. In our setup, let $\gamma_T$ be an orbit in a Morse--Bott family $\Sigma_T$ with $T > T_P$. Then since the Reeb flow is periodic, there must be an orbit $\gamma_0$ such that the $k$-th iteration $\gamma_0^k$ is in the family $\Sigma_{T_P}$ and $\gamma_0^{k'} = \gamma_T$ for some $k < k'$. It follows that
\begin{equation}\label{eq: iterCZ-inequality}
\CZ_-(\gamma_T) = \CZ_-(\gamma_0^{k'})  > \CZ_-(\gamma_0^{k})
\end{equation}
%as $\CZ_-$ strictly increases under iteration \cite[Lemma 4.8]{GiGu20}. 
by the increasing property under iteration.
As pointed out in Remark \ref{rem: lCZandthelowerRS}, the lower Conley--Zehnder index $\CZ_-(\tilde \gamma)$ of a Reeb orbit $\tilde \gamma$ in a Morse--Bott family $\Sigma_{\tilde T}$  coincides with
$$
\CZ_-(\tilde \gamma) = \RS(\Sigma_{\tilde T}) -\frac{1}{2}(\dim \Sigma_{\tilde T}  -1).
$$
It follows from \eqref{eq: iterCZ-inequality} that
$$
\RS(\Sigma_T) -\frac{1}{2}(\dim \Sigma_T  -1) > \RS(\Sigma_{T_P}) -\frac{1}{2}(\dim \p W -1) \geq n+1
$$
where the latter inequality is due to Corollary \ref{cor: RSlowerbound}. This proves the claim.

Recall from Remark \ref{rem: eqdefcSH} that the capacity $c^{\SH}(W)$ is given by
$$
c^{\SH}(W) = \inf \{a >  0 \;|\; \exists \; c \in \SH_{n+1}^{+, a}(W) \text{ such that $\iota^{a}(c) = [W, \p W]$} \}.
$$
%$$
%c^{\SH}(W) = \inf \{a >0  \; | \; j^a[W, \p W] = 0 \}, 
%$$
%and $j^a[W, \p W] \in \SH_{n}^a(W)$. 
In view of the full Morse--Bott formalism in \cite[Section 3.1]{Ue16} (see also \cite[Lemma 2.4]{Bo02}), generators of $\SH_{n+1}^{+, a}(W)$ correspond to Reeb orbits in Morse--Bott families $\Sigma_T$ with
$$
\RS(\Sigma_T) -\frac{1}{2}(\dim \Sigma_T  -1) \leq n+1 \leq  \RS(\Sigma_T) +\frac{1}{2}(\dim \Sigma_T  -1).
$$
It follows from \eqref{eq: minCZ>n+1} that Reeb orbits of period $> T_P$ do not contribute to the infimum of the definition of $c^{\SH}(W)$. Consequently $c^{\SH}(W) \leq T_P$.
\end{proof}

A useful property of filtered symplectic homology is the ``uniform instability'' given in \cite[Proposition 3.5]{GinzburgShon}. The following is a version adapted to our current setup. Below, $\epsilon$ is a small positive number from \eqref{eq: SHandHo}. 

\begin{proposition}\label{prop: unif_instability}
Let $(W, \lda)$ be a Liouville domain with vanishing symplectic homology. Suppose that the induced contact from $\alpha$ on the boundary yields a periodic Reeb flow of period $T_P$. Then, for any $c \geq c^{\SH}(W)$, the natural inclusion map 
$$
\SH^{a + \epsilon}(W) \rightarrow \SH^{a + c + \epsilon}(W) 
$$
is vanishing for $a = m T_P$ with $m \in \Z_{\geq 0}$. 
%(Here $c^{\SH}(W)$ denotes the symplectic homology capacity of $(W, \lda)$.)
\end{proposition}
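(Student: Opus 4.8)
The plan is to deduce the vanishing of the inclusion map $\SH^{a+\epsilon}(W) \to \SH^{a+c+\epsilon}(W)$ from the finiteness of $c^{\SH}(W)$ together with the periodicity of the Reeb flow, by a module-over-$\SH(W)$ argument in the spirit of \cite[Proposition 3.5]{GinzburgShon}. Recall that filtered symplectic homology carries a product (pair-of-pants) structure, and the full $\SH_*(W)$ acts on the filtered groups; moreover, because the Reeb flow is periodic of period $T_P$, the action spectrum is contained in $T_P \cdot \Z_{\geq 0}$ (up to the $C^2$-small interior perturbation governed by $\epsilon$), so one has a well-defined "shift by $T_P$" compatible with the filtration. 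The key fact is that $\SH_*(W) = 0$, so the unit class $1 \in \SH_*(W)$ is zero; tracking at which action level the unit dies is exactly what $c^{\SH}(W)$ measures — by definition $j^a[W,\p W] = 0$ for $a > c^{\SH}(W)$, i.e.\ the image of the fundamental class in $\SH^a_*(W)$ vanishes once $a$ exceeds the capacity.

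First I would set up the relevant maps precisely. For $a = mT_P$ with $m \geq 0$, the class $j^{a+\epsilon}[W,\p W] \in \SH^{a+\epsilon}_*(W)$ is (via the identification $\SH^{\epsilon}_*(W) \cong \Ho_{*+n}(W,\p W)$ and the shift by $a = mT_P$) the image of the fundamental class under the iterated continuation maps; since the Reeb flow is periodic, these "$m$ copies of a period" are genuine continuation maps between action-truncated complexes and the image generates the relevant part of $\SH^{a+\epsilon}_*(W)$ in top degree. Next I would invoke the ring/module structure: any element of $\SH^{a+\epsilon}(W)$ can be written, using the product and the periodicity-induced shift, as a product of the fundamental-class image with something of small action. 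Then, pushing forward to action level $a + c + \epsilon$ with $c \geq c^{\SH}(W)$, the relevant factor of fundamental-class type gets mapped into $\SH^{c'}(W)$ for $c' > c^{\SH}(W)$, where it vanishes by the very definition of $c^{\SH}(W)$ (Remark \ref{rem: eqdefcSH}). By the module structure (the product of zero with anything is zero), the whole image vanishes. Since $c^{\SH}(W) < \infty$ precisely because $\SH_*(W) = 0$, this applies for all $c \geq c^{\SH}(W)$.

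The main obstacle I expect is making the "shift by $T_P$ is compatible with the product and the filtration" step rigorous in the presence of the $C^2$-small interior perturbation: the genuine action values are $mT_P + O(\|H\|_{C^2})$ rather than exactly $mT_P$, so one has to be careful that the $\epsilon$-offsets accumulate benignly and that the truncation windows $[\,*, a+\epsilon)$ and $[\,*, a+c+\epsilon)$ are chosen so that no Reeb orbit of period in $(mT_P, (m+1)T_P)$ — of which there are none, by periodicity — interferes, while the continuation maps remain defined. This is exactly where periodicity of the Reeb flow is used in an essential way (as opposed to merely dynamical convexity): it guarantees the action spectrum is a discrete arithmetic progression, so the windows separating successive "blocks" of generators can be taken cleanly, and the shift operator is literally the $m$-fold composition of the shift by one period. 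Once this bookkeeping is in place, the argument reduces to the formal statement that in a (graded-commutative, unital) ring-module situation the image of any element under a map killing the unit is zero, combined with the identification of $c^{\SH}(W)$ as the action level at which the unit dies.
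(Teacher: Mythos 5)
Your proposal is essentially the same argument as the paper's, which also reduces the statement to the pair-of-pants ring structure: every $\sigma\in\SH^{a+\epsilon}(W)$ satisfies $\sigma=\sigma\cdot 1_W$, the unit $1_W$ corresponds to $[W,\partial W]$ under \eqref{eq: SHandHo}, and for $c\geq c^{\SH}(W)$ the inclusion $\SH^{\epsilon/2}(W)\to\SH^{c+\epsilon/2}(W)$ kills $1_W$ by the definition of $c^{\SH}$, so filtered bilinearity of the product forces the image of $\sigma$ in $\SH^{a+c+\epsilon}(W)$ to vanish. The ``obstacle'' you flag is exactly the one the paper handles, and in exactly the way you suggest: by periodicity and $a=mT_P$ there are no Reeb orbits in the action window $(a+\epsilon/2,\,a+\epsilon)$, so $\SH^{(a+\epsilon/2,a+\epsilon)}(W)=0$ and the inclusion $\SH^{a+\epsilon/2}(W)\to\SH^{a+\epsilon}(W)$ is an isomorphism, letting one represent $\sigma$ at the action level $a+\epsilon/2$ where the bilinear pairing with $\SH^{\epsilon/2}(W)$ lands in $\SH^{a+\epsilon}(W)$ as required. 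One phrasing quibble: you do not need to ``write an arbitrary element as a product of the fundamental-class image with something,'' and you do not need any claim about what generates $\SH^{a+\epsilon}(W)$ in top degree; the identity $\sigma=\sigma\cdot 1_W$ is automatic and suffices, so the middle paragraph of your argument can be streamlined considerably.
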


\begin{proof}
The proof is essentially contained in \cite[Proposition 3.5]{GinzburgShon}. The main ingredient is the unital ring structure via the pairs-of-pants product on the symplectic homology $\SH(W)$. We refer the reader e.g. to \cite[Section 6]{Rit} for details on the ring structure. 

Consider the following diagram
\begin{equation}\label{eq: diagunifinst}
\begin{tikzcd} 
\SH^{a + \epsilon/2}(W) \otimes \SH^{\epsilon/2}(W) \arrow[r]  \arrow[d, "\iota \otimes j"]&  \SH^{a + \epsilon}(W) \arrow[d] \\
\SH^{a + c+ \epsilon/2}(W) \otimes \SH^{c + \epsilon/2}(W) \arrow[r] & \SH^{a + c+ \epsilon}(W)
\end{tikzcd}
\end{equation}
where the vertical maps are the natural inclusions and the horizontal maps are from the pairs-of-pants product. Since $c + \epsilon/2 > c^{\SH}(W)$, we have that $j^{c + \epsilon/2}[W, \p W] = 0$ by the definition of $c^{\SH}(W)$. The unit element $1_W\in \SH^{\epsilon/2}(W)$ corresponds to the fundamental class $[W, \p W]$ via the identification \eqref{eq: SHandHo}. Therefore, under the inclusion $j: \SH^{\epsilon/2}(W) \rightarrow \SH^{c + \epsilon/2}(W)$, the image of the unit $1_W$ is zero. 

Take an element $\sigma \in \SH^{a + \epsilon}(W)$. Since $a = mT_P$ and $\epsilon$ is small, it follows that 
$$
\SH^{(a + \epsilon/2, a + \epsilon)}(W) = 0
$$ 
as there are no generators in the action window $(a + \epsilon/2, a + \epsilon)$. In view of the tautological exact sequence \eqref{eq: tau_LES}, this implies that the natural inclusion $\SH^{a + \epsilon/2}(W) \rightarrow \SH^{a + \epsilon}(W)$ is an isomorphism. Let us denote the element in $\SH^{a + \epsilon/2}(W)$ which corresponds to $\sigma \in \SH^{a + \epsilon}(W)$ still by $\sigma$. Now, with respect to the diagram \eqref{eq: diagunifinst}, we see that
$$
\begin{tikzcd} 
\sigma \otimes 1_W \arrow[mapsto, r]  \arrow[mapsto, d]&  \sigma \arrow[mapsto, d] \\
\iota(\sigma) \otimes 0 \arrow[mapsto, r] & 0
\end{tikzcd}
$$
and this completes the proof.
\end{proof}

The last essential ingredient for Theorem \ref{thm: result4} is a certain periodicity in filtered symplectic homology for periodic Reeb flow.  A Morse--Bott contact form $\alpha$ is called \emph{index-positive} \cite[Definition 3.10]{Ue19} if $\RS(\Sigma_T) - \frac{1}{2}(\dim \Sigma_T -1) > 3-n$ for every Morse--Bott family $\Sigma_T$; see also \cite[Definition 3.6]{Laz20}. It is straightforward that dynamically convex Morse--Bott contact forms are index-positive. The following fact is extracted from the isomorphism given in \cite[Equation (18)]{Ue19} which can be constructed when the Reeb flow on the contact boundary is periodic and index-positive.

\begin{proposition}\label{lem: isomviaSeidelmap}
Let $(W, \lda)$ be a Liouville domain with vanishing symplectic homology and $c_1(TW) = 0$. Suppose that the induced contact from $\alpha$ on the boundary yields a periodic Reeb flow of period $T_P$ and is index-positive. Then we have a canonical isomorphism
\begin{equation}\label{eq: filteredseidelmap}
\SH^{(a, b)}(W; \Z_2) \cong \SH^{(a + T_P, b + T_P)}(W;\Z_2)
\end{equation}
for $0 < a < b \in \R$. 
\end{proposition}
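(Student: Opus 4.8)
The plan is to construct the isomorphism \eqref{eq: filteredseidelmap} via the \emph{Seidel map} (or, in this Morse--Bott setting, a shift-by-$T_P$ map on the Floer complex) coming from the periodic Reeb flow, following the mechanism of \cite[Equation (18)]{Ue19}. Concretely, since the Reeb flow $\phi^t_\alpha$ is $T_P$-periodic, it generates a loop in the contactomorphism group of $(\p W,\xi)$, which extends to a Hamiltonian loop on a neighborhood of the end of $\widehat W$; the associated Seidel element acts on symplectic homology and, crucially, shifts the action filtration by exactly $T_P$. So the first step is to recall precisely the Seidel-type map $\mathcal{S}\colon \SH^{(a,b)}(W;\Z_2)\to \SH^{(a+T_P,\,b+T_P)}(W;\Z_2)$ and check that it is well-defined on each action window — this uses that generators of the Floer complex correspond to Reeb orbits whose action is (up to the $\epsilon$-perturbation) the period, so precomposition with the time-$T_P$ Reeb flow reparametrizes an orbit of period $T$ to one of period $T+T_P$ and raises its action functional value by $T_P$.

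Second, I would verify that $\mathcal{S}$ is a chain map, i.e. commutes with the Floer differential up to filtered chain homotopy, which is where the index-positivity hypothesis enters: index-positivity guarantees that no Floer trajectory can ``wrap'' and produce a correction term that fails to respect the filtration, exactly as in the index-positive case of \cite{Ue19}. The dynamical-convexity assumption (which by the remark after Proposition \ref{prop: RSlowerbound}'s corollary implies index-positivity) is therefore doing real work here. Third, to promote $\mathcal{S}$ from ``a filtered map'' to ``an isomorphism'', I would exhibit an inverse: the time-$(-T_P)$ Reeb flow is the loop run backwards, giving $\mathcal{S}^{-1}\colon \SH^{(a+T_P,b+T_P)}\to\SH^{(a,b)}$, and the composite is the Seidel element of a contractible loop, hence the identity. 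Alternatively, one can argue that $\mathcal{S}$ is an isomorphism on the associated graded pieces — on each single Morse--Bott family $\Sigma_T$ the map is just the identification $\Sigma_T\cong\Sigma_{T+T_P}$ of Morse--Bott moduli induced by the periodic flow — and then conclude by a five-lemma/filtration argument that it is an isomorphism on the window homology.

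The main obstacle I expect is the compatibility of the Seidel map with the action filtration at the level of \emph{quotient} complexes $\CF^{(a,b)}$, including the subtlety that the small $\epsilon$-perturbation of the admissible Hamiltonian must be handled uniformly so that the shift is \emph{exactly} $T_P$ rather than $T_P$ up to a controlled error; one has to check that the $\epsilon$-ambiguity is absorbed since $a,b$ are fixed and $\epsilon$ can be taken as small as desired relative to the spectral gaps. A secondary technical point is that \eqref{eq: filteredseidelmap} is asserted only over $\Z_2$: this is presumably because the Seidel map involves a count of (perturbed) holomorphic sections whose signs are delicate, and working over $\Z_2$ sidesteps orientation issues; I would simply adopt the $\Z_2$ convention throughout and cite \cite{Ue19} for the construction, restricting any of my own arguments to $\Z_2$ coefficients. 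The vanishing of $\SH(W)$ is not strictly needed to build $\mathcal{S}$, but it is what makes the statement useful in context (and it is already a standing hypothesis), so I would keep it in the statement and not belabor its role.
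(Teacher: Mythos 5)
Your proposal is correct and follows essentially the same route as the paper: the paper gives no independent argument but simply extracts the isomorphism from the Seidel-type shift map of \cite[Equation (18), Section 3.4]{Ue19} for periodic, index-positive Reeb flows, which is exactly the construction you outline (action shift by $T_P$, index-positivity ensuring filtration compatibility, $\Z_2$ coefficients to avoid orientation issues). The only small slip is attributing the work to a dynamical-convexity hypothesis: the proposition assumes index-positivity directly, with dynamical convexity only used elsewhere to verify it.
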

For the construction of the isomorphism, we refer the reader to \cite[Section 3.4]{Ue19}. We now give a proof of Theorem \ref{thm: result4}:

\begin{proof}[Proof of Theorem \ref{thm: result4}]

In this proof, (symplectic) homology groups are over $\Z_2$-coefficients. By Lemma \ref{lem: cSHforperflow} and Proposition \ref{prop: unif_instability}, the natural inclusion map
$$
\SH^{a + \epsilon}(W) \rightarrow \SH^{a + T_P + \epsilon}(W)
$$
vanishes for any $a = m T_P$ with $m \in \Z_{\geq 0}$. From the tautological exact sequence \eqref{eq: tau_LES}, which is split exact over fields coefficients, it follows that
\begin{equation}\label{eq: directsumfSH}
\SH^{(\epsilon + mT_p, \epsilon+(m +1)T_P)}(W) \cong \SH^{\epsilon + mT_P}(W) \oplus \SH^{\epsilon + (m+1)T_P}(W).
\end{equation}

Proposition \ref{lem: isomviaSeidelmap} tells us that
$$
\rk \SH^{\epsilon + m T_P}(W) = \rk \SH^{\epsilon + (m+2)T_P}(W)
$$
for each $m \geq 0$. In particular, for $m \geq 0$ even, via the identification \eqref{eq: pSHandHo} we have
$$
\rk \SH^{\epsilon + m T_P}(W) = \rk \SH^{\epsilon}(W) = \rk \Ho(W, \p W).
$$
This proves the first assertion of the theorem.

Now assume that the contact boundary $(\p W, \alpha)$ is in addition Boothby--Wang. In this case we can completely determine the filtered symplectic homology group $\SH^a(W)$ using a standard Morse--Bott technique. First, note that the spectrum of the Reeb flow on $(\p W, \alpha)$ is given by
$$
\Spec(\p W, \alpha) = \{m T_P \;|\; m \in \Z_{\geq 1}\}.
$$
Morse--Bott families are set-wise the same as
$$
\Sigma_{m T_P} = \p W,
$$
and its Robbin--Salamon index satisfies
$$
\RS(\Sigma_{m T_P}) = m \RS(\Sigma_{T_P}).
$$
See Remark \ref{rem: meamindexandRSindex}. As in \cite[Theorem 5.4]{KvK}, there is a homological spectral sequence $\{E^r_{pq}, d^r_{pq}\}$ with 
$$
d^{r}_{pq}: E^r_{pq} \rightarrow E^r_{p - r, q + r -1}
$$ 
converging to $\SH(W)$, whose first page is given by
\begin{equation}\label{eq: MBss}
E^1_{pq} = \begin{cases} \Ho_{p+q - p\RS(\Sigma_{T_P}) + n-1}(\p W) & p \geq 1,  \\ \Ho_{q + n }(W, \p W) & p =0, \\ 0 & \text{otherwise.} \end{cases}
\end{equation}
We refer the reader to \cite[Appendix B]{KvK} for a detailed construction of the spectral sequence using the action filtration. 

Note that $E^1_{0n} = \Ho_{2n}(W, \p W) = \Z_2$. Since $\SH(W) = 0$ by the assumption, there must be a nontrivial generator at $E^1_{pq}$ with total degree $p+q = n+1$ to kill the generator at $E^1_{0n}$. On the other hand, by Corollary \ref{cor: RSlowerbound}, for each $p \geq 1$,
$$
p \RS(\Sigma_{T_P})  \geq 2np.
$$
From this we can deduce from \eqref{eq: MBss} that the total degree $p+q$ of nontrivial generators at $E^1_{pq}$ with $p \geq 1$ is at least $n+1$ and moreover attains the minimum when $p = 1$. Indeed, note that the `bottom' generator on each column with $p \geq 1$ has total degree satisfying
$$
p+ q  = p \RS(\Sigma_{T_P})-n +1 \geq 2np-n+1 \geq n+1.
$$
Here the equality holds if and only if $p =1$. Therefore we must have that $E^1_{1n} = \Ho_0(\p W) = \Z_2$ and $\RS(\Sigma_{T_P}) = 2n$. See Figure \ref{fig: E1example} on the left, describing potential positions of nontrivial generators when $n=3$ in the current situation. Combining the assumption that $\SH(W) = 0$, we obtain following consequences just for degree reasons.
\begin{itemize}
\item $\Ho_*(W, \p W) \cong \Ho_*(B^{2n}, \p B^{2n})$.
\item $\Ho_*(\p W) \cong \Ho_*(S^{2n-1})$.
\item The differential $d^1_{pq}$ is trivial except when $q = (2p - 1)n+1-p$ for $p \geq 1$.
\end{itemize}
The $E^1$-page with $n=3$ is now as in Figure \ref{fig: E1example} on the right. In particular, the spectral sequence terminates at the $E^2$-page, and the $E^1$-page for $\SH(W)$ is identical to the one for $\SH(B^{2n})$ including the differential $d^1$. 
%It follows that
%$$
%\SH_k(W) \cong \bigoplus_{p+q = k} E^2_{pq}(W) \cong  \bigoplus_{p+q = k} E^2_{pq}(B^{2n}) \cong  \SH_k(B^{2n}).
%$$
Since the spectral sequence is constructed via the action filtration, it follows that, for each integer $m \geq 0$, the generators on the $E^2$-page with $p \leq m$ exactly contribute to the filtered symplectic homology $\SH^{\epsilon + m T_P}(W)$ where $\epsilon>0$ is sufficiently small. In other words,
$$
\SH_k^{\epsilon + m T_P}(W) \cong \bigoplus_{\substack{p+q = k \\ p \leq m}} E^2_{pq}(W) \cong  \bigoplus_{\substack{p+q = k \\ p \leq m}} E^2_{pq}(B^{2n}) \cong  \SH_k^{\epsilon + m T_P}(B^{2n}).
$$
From the above computation of the $E^2$-page, we conclude that
$$
 \SH^{\epsilon + m T_P}(W) \cong \Ho(W, \p W) \cong \Ho(B^{2n}, \p B^{2n}).
$$
\end{proof}
%Now assume that the contact boundary $(\p W, \alpha)$ is Boothby--Wang and $W$ is a subcritical Stein domain. In this case, from the Morse--Bott description of symplectic homology e.g. as in [Uebele], we further know that 
%$$
%\SH^{(\epsilon + (m - 1)T_p, \epsilon+ m T_P)}(W) \cong \Ho (\p W)
%$$
%for any integer $m \geq 1$. It follows from \eqref{eq: directsumfSH} that
%$$
%\rk \SH^{\epsilon}(W) + \rk \SH^{\epsilon + T_P}(W) = \rk \Ho(W, \p W) +  \rk \SH^{\epsilon + T_P}(W) = \rk \Ho (\p W).
%$$
%By Lemma \ref{lem: subcriticalHomhalf} we obtain
%$$
% \rk \SH^{\epsilon + T_P}(W) = \rk  \Ho(W, \p W).
%$$
%This completes the proof.

\begin{example}\label{ex: MBssball}
In the situation of Example \ref{ex: ballexampleDC}, we can  explicitly compute the $E^1$-page in \eqref{eq: MBss}. The case when $n=3$ is given in Figure \ref{fig: E1example} on the right; a dot denotes a nontrivial generator. Here the nontrivial differential maps are only at $(p, q)$ with $p + q = 6m + 4$ for each integer $m \geq 0$. Observe that the spectral sequence therefore terminates at the $E^2$-page.

\begin{figure}[htp]
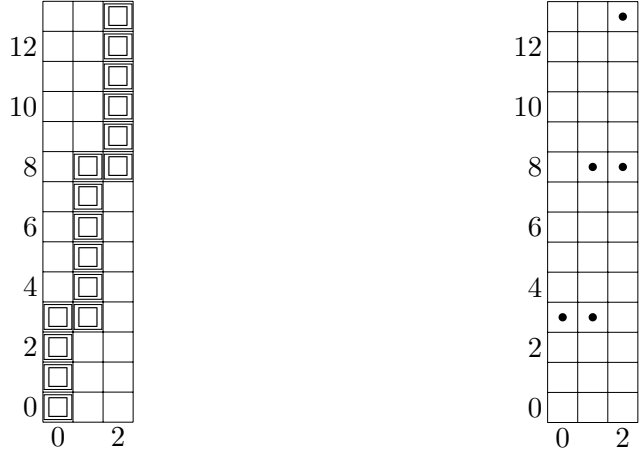

\begin{subfigure}[b]{0.25\textwidth}
\centering
\begin{sseq}{{0}...2}
{{0}...{13}}

\ssmoveto 0 0
\ssdropboxed{\square}
\ssmove 0 1
\ssdropboxed{\square}
\ssmove 0 1
\ssdropboxed{\square}
\ssmove 0 1
\ssdropboxed{\square}

\ssmoveto 1 3
\ssdropboxed{\square}
\ssmove 0 1
\ssdropboxed{\square}
\ssmove 0 1
\ssdropboxed{\square}
\ssmove 0 1
\ssdropboxed{\square}
\ssmove 0 1
\ssdropboxed{\square}
\ssmove 0 1
\ssdropboxed{\square}

%\ssmoveto 2 6
%\ssdropbull
%\ssmove 0 2
%\ssdropbull

\ssmoveto {2} {8}
\ssdropboxed{\square}
\ssmove 0 1
\ssdropboxed{\square}
\ssmove 0 1
\ssdropboxed{\square}
\ssmove 0 1
\ssdropboxed{\square}
\ssmove 0 1
\ssdropboxed{\square}
\ssmove 0 1
\ssdropboxed{\square}

%\ssmoveto {-2} {-10}
%\ssdropbull
%\ssmove 0 2
%\ssdropbull

\end{sseq}
\end{subfigure}\hspace{7.0em}
\begin{subfigure}[b]{0.25\textwidth}
\centering
\begin{sseq}{{0}...2}
{{0}...{13}}
\ssmoveto 0 {3}
\ssdropbull

\ssmoveto 1 3
\ssdropbull
\ssmove 0 5
\ssdropbull

%\ssmoveto 2 6
%\ssdropbull
%\ssmove 0 2
%\ssdropbull

\ssmoveto {2} {8}
\ssdropbull
\ssmove 0 5
\ssdropbull

%\ssmoveto {-2} {-10}
%\ssdropbull
%\ssmove 0 2
%\ssdropbull

\end{sseq}

\end{subfigure}
\caption{The $E^1$-pages}
\label{fig: E1example}
\end{figure}

\end{example}

\begin{remark}\label{rem: meamindexandRSindex}
For a periodic Reeb flow of common period $T_P$, the Robbin--Salamon index $\RS(\Sigma_{T_P})$ agrees with the \emph{mean index} $\Delta(\Sigma_{T_P})$; we refer the reader to \cite[Section 4.1]{GiGu20}, \cite[Section 5.2]{KvK}, and \cite{SaZe92}  for the precise definition of the mean index and the relation with the Robbin--Salamon index. The mean index is homogeneous under iteration, i.e. $\Delta(\Sigma_{mT_P}) = m \Delta(\Sigma_{T_P})$ for each integer $m \geq 1$  \cite[Section 4.2]{GiGu20}. It follows that
$$
\RS(\Sigma_{mT_P}) = m \RS(\Sigma_{T_P}).
$$
\end{remark}

\subsection{Relation to a conjecture of Bowden--Crowley} \label{sec: conjBC} It is conjectured by Bowden--Crowley \cite[Conjecture 2.6]{BoCr21} that \emph{any compactly supported symplectomorphism of a flexible Weinstein structure is isotopic/homotopic to the identity}. Theorem \ref{thm: result4} can be seen as a clue in the positive
 for the conjecture in terms of symplectic homology. 

Let $(W, \lda)$ be a Liouville domain with vanishing symplectic homology and $c_1(TW) = 0$. Suppose that the natural contact form $\alpha = \lda|_{\p W}$ on the boundary yields a Reeb flow of period $T_P$ as in Theorem \ref{thm: result4}. The periodic Reeb flow defines a notable class of compactly supported symplectomorphisms $\tau$ on the completion $\widehat W$, called \emph{fibered twists}: Consider a Hamiltonian $H: \widehat W \rightarrow \R$ such that $H \equiv 0$ in the interior $W$ and $H$ has slope $T_P$ at the end. We define $\tau = \tau_H$ to be the time $1$-map of the Hamiltonian diffeomorphism generated by $H$. The symplectic isotopy class of $\tau$ does not depend on the choice of $H$. For more details, see e.g. \cite[Section 4]{KKL18}.

Fibered twists $\tau$ have provided important examples of compactly supported symplectomorphisms which are smoothly isotopic to $\id$ but \emph{not} symplectically \cite{KKL18, Se00, Ul17}. It might therefore be worth paying our special attention to them for examining the conjecture.
%It might therefore be interesting to examine the conjecture with fibered twists. 
In particular, the result in \cite[Theorem 1.1]{Ul17} gives a necessary condition for $\tau$ to be symplectically isotopic to $\id$ using a variant of symplectic homology. In our current setup, this can be interpreted as follows. (Recall that $W$ is a Liouville domain with $\SH(W) = 0$ and $c_1(TW) = 0$ whose contact boundary admits a periodic Reeb flow.)
\begin{proposition}\label{prop: perUlja}
If a fibered twist $\tau$ is symplectically isotopic to $\id$, then 
\begin{equation}\label{eq: perUlja}
\SH^{\epsilon + m T_P}(W) \cong \Ho(W, \p W)
\end{equation}
for each integer $m \geq 0$. 
\end{proposition}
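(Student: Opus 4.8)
The plan is to derive \eqref{eq: perUlja} as the specialization, to our setting, of Uljarević's invariance theorem \cite[Theorem 1.1]{Ul17}. Recall that to a compactly supported symplectomorphism $\phi$ of the completion $\widehat W$ one associates a Floer homology $\HF_*(\phi)$ which is invariant under compactly supported symplectic isotopies of $\phi$ and which, for $\phi = \id$ and a $C^2$-small admissible Hamiltonian, is computed by $\HF_*(\id) \cong \SH_*^{\epsilon}(W) \cong \Ho_{*+n}(W, \p W)$, with $\epsilon$ as in \eqref{eq: SHandHo}. The first step is to check that the fibered twist $\tau = \tau_H$ and each iterate $\tau^m$ are admissible inputs here: since $H$ has constant slope $T_P$ near the end and the Reeb flow of $\alpha$ is periodic of period $T_P$, the Hamiltonian vector field of $H$ equals $T_P$ times the Reeb vector field there, so its time-one flow is the identity outside a compact set and $\tau$ is compactly supported; the iterate $\tau^m$ is the fibered twist generated by $mH$ (of slope $mT_P$) and is compactly supported as well, the Reeb flow being periodic of period $mT_P$ too.

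Second, if $\tau$ is symplectically isotopic to $\id$ through compactly supported symplectomorphisms, say via $\{\psi_t\}_{t\in[0,1]}$ with $\psi_0 = \id$ and $\psi_1 = \tau$, then $\{\psi_t^m\}_{t\in[0,1]}$ is a compactly supported symplectic isotopy from $\id$ to $\tau^m$, so $\tau^m \simeq \id$ for every $m \geq 1$. Third, I would invoke the computation of $\HF_*(\tau^m)$ underlying \cite[Theorem 1.1]{Ul17}: twisting a $C^2$-small admissible Hamiltonian by the slope-$mT_P$ fibered twist $\tau^m$ amounts, at the chain level, to raising its slope by $mT_P$, so that $\HF_*(\tau^m)$ is identified with a truncation of symplectic homology which, in our normalization and for $\epsilon$ small enough (using that the Reeb period spectrum has a gap just above $mT_P$), equals $\SH_*^{\epsilon + mT_P}(W)$. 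Combining the three steps,
\[
\SH_*^{\epsilon + mT_P}(W) \;\cong\; \HF_*(\tau^m) \;\cong\; \HF_*(\id) \;\cong\; \Ho_{*+n}(W, \p W)
\]
for every $m \geq 1$; forgetting the grading gives \eqref{eq: perUlja}, and the case $m = 0$ is \eqref{eq: SHandHo}.

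The step I expect to require the most care is the third one: reading off from the construction of \cite{Ul17} that $\HF_*(\tau^m)$ is precisely $\SH_*^{\epsilon + mT_P}(W)$ in our conventions. This means matching up several normalizations --- the class of Hamiltonians defining $\HF_*(\phi)$, the sign/positivity of the fibered twist relative to the direction of the action filtration, and the exact truncation level $\epsilon + mT_P$; if \cite{Ul17} instead records the reduced (positive) version, one additionally feeds the vanishing of its positive part into the tautological sequence \eqref{eq: tau_LES_Ho} to reach the same conclusion. The standing hypothesis $\SH_*(W) = 0$, though not logically needed for this identification, is what makes the statement substantive, since it keeps $\SH_*^{\epsilon + mT_P}(W)$ equal to $\Ho_{*+n}(W,\p W)$ for all $m$ rather than letting it drift toward $\SH_*(W)$ as $m \to \infty$. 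A minor further point is to confirm that ``symplectically isotopic to $\id$'' is understood in the compactly supported sense of \cite{Ul17}, and that $\tau^m$ and the isotopies $\psi_t^m$ remain within Uljarević's admissible class of symplectomorphisms.
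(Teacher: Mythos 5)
Your proof is correct and follows essentially the same route as the paper: both rest on Uljarević's \cite[Theorem~1.1]{Ul17} together with the identification of $\HF(\id,a)$ with $\SH^a(W)$ (the paper cites \cite[Remark~2.26]{Ul17} for this) and \eqref{eq: SHandHo}. The paper is terser — it states the isomorphism $\HF(\id,\epsilon)\cong\HF(\id,\epsilon+mT_P)$ as an immediate consequence of \cite[Theorem~1.1]{Ul17} — whereas you unpack the same shift by routing through $\HF(\tau^m,\epsilon)$ and explicitly checking that $\tau^m$ is compactly supported and symplectically isotopic to $\id$; these intermediate verifications are left implicit in the paper but are the right thing to have in mind.
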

\begin{proof}
By \cite[Theorem 1.1]{Ul17}, the Floer homology group for the identity map satisfies 
$$
\HF(\id, \epsilon) \cong \HF(\id, \epsilon + m T_p).
$$
We refer the reader to \cite[Section 2]{Ul17} for a definition of the above groups. By definition, the group $\HF(\id, a)$ can be identified to the filtered symplectic homology $\SH^a(W)$ \cite[Remark 2.26]{Ul17}. It follows from \eqref{eq: SHandHo} that
$$
\Ho (W, \p W) \cong \SH^{\epsilon}(W) \cong \SH^{\epsilon + m T_p}(W).
$$
This completes the proof.
\end{proof} 

\begin{example}
Fibered twists $\tau$ on $\C^n = \widehat{B^{2n}}$ associated to the periodic Reeb flow in \eqref{eq: perflowonsphere} on the standard contact sphere $(S^{2n-1}, \alpha_{\st})$ are symplectically isotopic to $\id$; it can be seen as the monodromy of a trivial Lefschetz fibration on $\C^{n+1}$. An explicit computation of filtered symplectic homology, using Morse--Bott techniques as in Example \ref{ex: MBssball}, shows that $\SH^{\epsilon + 2\pi m}(B^{2n}) \cong \Z_2 \cong \Ho(B^{2n}, \p B^{2n})$ for each integer $m \geq 0$. This is an example of Proposition \ref{prop: perUlja}.
\end{example}

It is well-known that $\SH(W)= 0$ when $W$ is flexible \cite{BoEkEl12}. Therefore Theorem \ref{thm: result4} partially establishes the periodic pattern \eqref{eq: perUlja} in symplectic homology under the dynamical convexity of the contact boundary. In the spirit of the conjecture, one may further expect that the periodic pattern in \eqref{eq: perUlja} holds without the dynamical convexity assumption, which can be seen as a version of the conjecture of Bowden--Crowley for fibered twists in terms of filtered symplectic homology:

\begin{question}
Let $(W, \lda)$ be a flexible domain, and suppose that the induced contact form $\alpha = \lda|_{\p W}$ on the boundary yields a periodic Reeb flow of period $T_P$. Then, does it hold true that
$$
\SH^{\epsilon + m T_P}(W) \cong \Ho(W, \p W)
$$
for each integer $m \geq 0$?
\end{question}

\section{Symplectic capacities of periodic dynamically convex Liouville domains}\label{sec: ThmC}

\subsection{Morse--Bott Reeb orbit property}  In this section, singular homology and symplectic homology groups are over $\Q$. The following is a generalization of the Reeb orbit property for nondegenerate Liouville domains in Proposition \ref{prop: axiomaticproportyofc_i} to the case of Morse--Bott Liouville domains.

\begin{proposition}[Morse--Bott Reeb orbit property]\label{prop: MBReebprop}
Let $(W, \lda)$ be a Morse--Bott Liouville domain. If $c_k^{\GH}(W) < \infty$, then 
$$
c_k^{\GH}(W) = T \in \Spec(\p W, \alpha)
$$
where $\displaystyle\RS(\Sigma_T) \in \left[2k + n-1 - \frac{1}{2}(\dim \Sigma_T -1), 2k + n-1 + \frac{1}{2}(\dim \Sigma_T -1)\right]$.

%Let $(W, \lda)$ be a Morse--Bott Liouville domain with $\SH(W) = 0$. The $i$-th capacity $
%c_i(W)$ coincides with the action value $T$ of a Morse--Bott submanifold $\Sigma_T$ such that a Morse--Bott perturbed nondegenerate Reeb orbit $\gamma$ from $\Sigma_T$ has the Conley--Zehnder index $\CZ(\gamma) = 2k + n-1$.

\end{proposition}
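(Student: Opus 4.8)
The plan is to deduce the Morse--Bott version from the nondegenerate case in Proposition \ref{prop: axiomaticproportyofc_i}(2) by a perturbation argument, using the definition $c_k^{\GH}(W) = \inf_{f>1} c_k^{\GH}(W_f) = \sup_{f<1} c_k^{\GH}(W_f)$ over generic (hence nondegenerate) graphical subdomains $W_f$. First I would take a sequence of smooth functions $f_i \to 1$ (say from above, and a second sequence from below) so that each $W_{f_i}$ is nondegenerate and $c_k^{\GH}(W_{f_i}) \to c_k^{\GH}(W)$. Since $c_k^{\GH}(W) < \infty$, by monotonicity we may assume $c_k^{\GH}(W_{f_i}) < \infty$ for all large $i$, so the nondegenerate Reeb orbit property applies: there is a Reeb orbit $\gamma_i$ on $(\p W_{f_i}, f_i\alpha)$ with $c_k^{\GH}(W_{f_i}) = \mathcal{A}(\gamma_i)$ and $\CZ(\gamma_i) = 2k+n-1$.

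The core of the argument is then a compactness/convergence step: as $f_i \to 1$, the contact forms $f_i\alpha$ converge in $C^\infty$ to $\alpha$, so the Reeb vector fields converge, and the periods $T_i := \mathcal{A}(\gamma_i) = c_k^{\GH}(W_{f_i})$ converge to $T := c_k^{\GH}(W)$; moreover $T_i$ lies in the spectrum of $f_i\alpha$. A standard Arzelà--Ascoli argument (the orbits $\gamma_i$ have uniformly bounded period and hence uniformly bounded derivatives) produces a subsequential limit which is a closed Reeb orbit $\gamma$ of $\alpha$ of period $T$, so $T \in \Spec(\p W, \alpha)$; in particular $\gamma$ lies in some Morse--Bott family $\Sigma_T$. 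Here one must be slightly careful: the limit orbit could a priori be a multiple cover or the period $T$ could be an integer multiple of the minimal period of $\gamma$, but this is fine since we only claim $T \in \Spec(\p W,\alpha)$ and $\RS(\Sigma_T)$ refers to the Morse--Bott family of exact period $T$.

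It remains to pin down the Robbin--Salamon index of $\Sigma_T$. The key mechanism is the perturbation relation of \cite[Lemma 4.10]{Mc16} (used already in the proof of Proposition \ref{prop: RSlowerbound}): under a $C^2$-small perturbation, the Conley--Zehnder indices of the nondegenerate orbits into which a Morse--Bott family $\Sigma_T$ splits all lie in the interval $\left[\RS(\Sigma_T) - \tfrac12(\dim\Sigma_T-1),\, \RS(\Sigma_T) + \tfrac12(\dim\Sigma_T-1)\right]$. For $i$ large, $\gamma_i$ is a nondegenerate orbit $C^2$-close to the Morse--Bott family $\Sigma_{T_i}$ of $\alpha$, and for $i$ large enough $T_i$ is close enough to $T$ that $\gamma_i$ is in fact a perturbation of an orbit in $\Sigma_T$ (using that the spectrum of $\alpha$ near $T$ consists of isolated Morse--Bott values, or more precisely that $\RS(\Sigma_{T_i})$ and $\dim\Sigma_{T_i}$ stabilize); combining with $\CZ(\gamma_i) = 2k+n-1$ gives
\begin{equation*}
2k+n-1 \in \left[\RS(\Sigma_T) - \tfrac12(\dim\Sigma_T - 1),\; \RS(\Sigma_T) + \tfrac12(\dim\Sigma_T - 1)\right],
\end{equation*}
which is exactly the claimed inequality. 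The main obstacle I anticipate is the bookkeeping in this last step: ensuring that the nondegenerate orbits $\gamma_i$ genuinely arise as Bourgeois-type perturbations of $\Sigma_T$ (and not of a nearby family of period slightly different from $T$), i.e. controlling how the Morse--Bott spectrum and the associated indices behave under the $C^\infty$-small deformation $f_i\alpha \to \alpha$ and under the simultaneous shrinking of the action window; for this one uses that $\Spec(\p W,\alpha)$ is discrete and that near a Morse--Bott value the reparametrized flow depends smoothly on the perturbation, so that $\RS$ and $\dim\Sigma_{\bullet}$ are locally constant along the relevant parameter.
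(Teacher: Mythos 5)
Your proposal is correct and follows essentially the same strategy as the paper: reduce to the nondegenerate case via graphical perturbations, extract a converging sequence of orbits with $\CZ = 2k+n-1$, take an Arzel\`a--Ascoli limit to obtain the limiting orbit and its period in $\Spec(\p W,\alpha)$, and then invoke McLean's $C^2$-perturbation result \cite[Lemma 4.10]{Mc16} to place $\RS(\Sigma_T)$ in the claimed interval. The one structural difference is that the paper interposes the Bourgeois Morse--Bott perturbation scheme \cite[Lemma 2.3]{Bo02}: it produces a sequence of contact forms $\alpha_i$ that are only \emph{$D_i$-nondegenerate} (nondegenerate up to action $D_i$, with $D_i \to \infty$), and it then proves a separate Reeb orbit property (Lemma~\ref{lem: ReeborbitpropertyforTnondegenerate}) for $T$-nondegenerate domains, so that the full argument is a two-stage perturbation ($W \rightsquigarrow W_i$ via Bourgeois, then $W_i \rightsquigarrow W_{i,f}$ generic). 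You instead take one-step generic nondegenerate perturbations $W_{f_i}$ and apply the nondegenerate Reeb orbit property of Proposition~\ref{prop: axiomaticproportyofc_i}(2) directly. Both routes reach the same conclusion; the paper's intermediary buys more explicit control of which nondegenerate orbits arise (they are Bourgeois-type perturbations of a specific $\Sigma_T$, with indices governed by $\RS(\Sigma_T)$ and a Morse function on $\Sigma_T$), which makes the McLean step transparent, whereas your version relies on the Arzel\`a--Ascoli limit plus discreteness of $\Spec(\p W,\alpha)$ to guarantee that $\gamma_i$ eventually lies $C^2$-close to the single family $\Sigma_T$ before applying McLean — exactly the ``bookkeeping'' you flagged, which does need to be spelled out but poses no real obstruction. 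You should also make the convergence $c_k^{\GH}(W_{f_i}) \to c_k^{\GH}(W)$ explicit (it is not automatic for an arbitrary sequence $f_i \to 1$); the squeeze argument via monotonicity and the fact that $W_{f_i} \subset W_{f'}$ for $i$ large, as in the paper's proof of \eqref{eq: ckWinfckWi}, handles it.
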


To give a proof, we need a lemma for Liouville domains which are partially nondegenerate:

\begin{definition}\label{def: Tnondeg}
Let $T >0$ be a real number. A contact form $\alpha$ is called \emph{$T$-nondegenerate} if every periodic Reeb orbit of action less than $T$ is nondegenerate. 
\end{definition}

\begin{lemma}\label{lem: ReeborbitpropertyforTnondegenerate}
Let $(W, \lda)$ be a $T$-nondegenerate Liouville domain with $c_1(TW) = 0$. If the capacity $c_k^{\GH}(W) < T$, then
$$
c_k^{\GH}(W) = \mathcal{A}(\gamma)
$$
for some nondegenerate Reeb orbit $\gamma$ in $(\p W, \alpha)$ with $\mathcal{A}(\gamma) < T$ and
$$
\CZ(\gamma) = 2k + n-1.
$$
\end{lemma}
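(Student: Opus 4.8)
The strategy is to reduce the $T$-nondegenerate case to the genuinely nondegenerate Reeb orbit property already available in Proposition \ref{prop: axiomaticproportyofc_i}(2), by perturbing the contact form only in the high-action regime where degeneracy may occur, while leaving the low-action dynamics untouched. Concretely, I would fix a small number $\eta>0$ with $c_k^{\GH}(W) < T - \eta$, and then perturb the completion to a graphical subdomain $W_f$ with $f$ close to $1$, chosen so that (i) $W_f$ is globally nondegenerate, and (ii) all periodic Reeb orbits of action $< T - \eta/2$ in $W_f$ are in action-preserving, index-preserving correspondence with those of $(\partial W,\alpha)$ — this is possible precisely because those orbits were already nondegenerate, so a sufficiently $C^\infty$-small perturbation does not create, destroy, or shift them. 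By the definition $c_k^{\GH}(W) = \inf_{f>1} c_k^{\GH}(W_f) = \sup_{f<1} c_k^{\GH}(W_f)$ and the continuity/monotonicity of capacities, for $f$ close enough to $1$ we have $c_k^{\GH}(W_f) < T - \eta/2$ as well.

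\textbf{Key steps in order.} First, apply Proposition \ref{prop: axiomaticproportyofc_i}(2) to the nondegenerate domain $W_f$: since $c_k^{\GH}(W_f) < \infty$, there is a Reeb orbit $\gamma_f$ of $(\partial W_f, f\alpha)$ with $c_k^{\GH}(W_f) = \mathcal{A}(\gamma_f)$ and $\CZ(\gamma_f) = 2k+n-1$. Second, observe that $\mathcal{A}(\gamma_f) = c_k^{\GH}(W_f) < T - \eta/2$, so $\gamma_f$ lies in the low-action regime where the correspondence of step (ii) applies; hence $\gamma_f$ comes from a genuine nondegenerate Reeb orbit $\gamma$ of $(\partial W, \alpha)$ with the same action and the same Conley--Zehnder index $2k+n-1$. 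Third, take a sequence $f_j \to 1$ (from above and below, to get both the $\inf$ and $\sup$ descriptions) so that $c_k^{\GH}(W_{f_j}) \to c_k^{\GH}(W)$; each $c_k^{\GH}(W_{f_j})$ equals $\mathcal{A}(\gamma_j)$ for a nondegenerate orbit $\gamma_j$ of $(\partial W,\alpha)$ with $\CZ(\gamma_j) = 2k+n-1$ and action $< T$. Since the action spectrum of $(\partial W, \alpha)$ below $T$ is discrete (the relevant orbits are nondegenerate, hence isolated in moduli, and actions accumulate only at $T$ or above), the values $\mathcal{A}(\gamma_j)$ take only finitely many possibilities below $T-\eta/2$, so after passing to a subsequence $\gamma_j$ is eventually one fixed orbit $\gamma$, and $c_k^{\GH}(W) = \mathcal{A}(\gamma)$ with $\CZ(\gamma) = 2k+n-1$ and $\mathcal{A}(\gamma) < T$.

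\textbf{Main obstacle.} The delicate point is the perturbation step: I must guarantee that making $(\partial W, \alpha)$ globally nondegenerate can be done \emph{without disturbing the already-nondegenerate low-action orbits}, neither their actions nor their Conley--Zehnder indices. This is a standard fact — nondegenerate closed Reeb orbits are stable under $C^\infty$-small perturbations of the contact form, with continuously varying action and locally constant $\CZ$ — but one should cite it carefully (e.g. via the implicit function theorem applied to the return map, or the perturbation results in \cite{Bo02, Mc16}) and check that the graphical-subdomain perturbations $W_f$ used in \cite{GH18} to define $c_k^{\GH}$ for degenerate domains can be taken in this $C^\infty$-small, action-controlled form. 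A secondary subtlety is matching conventions for the Conley--Zehnder grading between $(\partial W_f, f\alpha)$ and $(\partial W, \alpha)$ under the identification $\widehat\lambda|_{\partial W_f} = f\alpha$; since $f$ is $C^0$-close to $1$ this is a homotopy of the linearized flow through nondegenerate paths, so the index is unchanged, but it deserves an explicit remark.
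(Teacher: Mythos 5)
Your proposal is correct and follows essentially the same route as the paper: perturb to nondegenerate graphical subdomains $W_f$, apply the nondegenerate Reeb orbit property of Proposition \ref{prop: axiomaticproportyofc_i} there, and pass to the limit $f \to 1$, using the $T$-nondegeneracy of $\alpha$ together with stability of the Conley--Zehnder index under $C^2$-small perturbations (the paper runs this via an Arzel\`a--Ascoli limit and \cite[Lemma 4.10]{Mc16} rather than your explicit low-action correspondence). The only imprecision is calling that correspondence literally action-preserving --- actions of the persisting nondegenerate orbits only vary continuously with $f$ --- but your sequence-plus-discreteness step already absorbs this, as you yourself note in the final paragraph.
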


\begin{proof}
Recall from Section \ref{sec: shcapacities} that 
$$
c_k^{\GH}(W) = \displaystyle \inf_{f>1} c_k^{\GH}(W_{f})
$$ 
where $f: \p W \rightarrow \R_{ > 1}$ is a smooth function and $W_{f}$ is the corresponding nondegenerate graphical subdomain of $\widehat W$. Combining with the Reeb orbit property in Proposition \ref{prop: axiomaticproportyofc_i}, we can take a sequence of nondegenerate Reeb orbits $\gamma_i$ of the contact form $\alpha_i : = \lda|_{\p W_{f_i}}$ with $\CZ(\gamma_i) = 2k + n-1$ such that 
$$
c_k^{\GH}(W) = \lim_{i\rightarrow \infty} \mathcal{A}(\gamma_i).
$$
An Arzel\`a--Ascoli argument then yields the existence of a Reeb orbit $\gamma$ of the contact form $\alpha = \lda|_{\p W}$ as the limit of $\gamma_i$'s such that
$$
c_k^{\GH}(W) = \mathcal{A}(\gamma) < T.
$$
Note here that $\gamma$ must be nondegenerate since $\alpha$ is $T$-nondegenerate. We may assume that $\alpha_i$ is chosen to be $C^2$-convergent to $\alpha$ by the genericity result in \cite[Proposition 6.1]{HWZ}. From the property in \cite[Lemma 4.10]{Mc16} it follows that
$$
\CZ(\gamma_i) = \CZ(\gamma)
$$
for sufficiently large $i$. Consequently $\CZ(\gamma) = 2k + n-1$.
\end{proof}

We now give a proof of the Morse--Bott Reeb orbit property:

\begin{proof}[Proof of Proposition \ref{prop: MBReebprop}]
Let $D_i > 0$ be a sequence of real numbers with $D_i < D_{i+1}$ and $D_i \rightarrow \infty$ as $i \rightarrow \infty$. Then the standard Morse--Bott perturbation scheme as in \cite[Lemma 2.3]{Bo02} produces a sequence of contact forms $\alpha_i$ for the contact boundary $\p W$ such that
\begin{itemize}
\item $\alpha_i \geq \alpha_{i+1}$ for $i \in \N$ (meaning that $\alpha_{i} = f \alpha_{i+1}$ for a smooth function $f: \p W \rightarrow \R$ with $f \geq 1$);
\item $\alpha_i$ converges to $\alpha$ in $C^2$-topology;
\item $\alpha_i$  is $D_i$-nondegenerate as in Definition \ref{def: Tnondeg}.
\end{itemize}
Let $W_i \subset \widehat W$ be the graphical subdomain such that $\widehat{\lda}|_{\p W_i} = \alpha_i$ as in Section \ref{sec: shcapacities}.
%More precisely, if $\alpha_i = (1+ f_i) \alpha$ for some positive smooth function $f_i: \p W \rightarrow \R$, then
%$$
%W_i : = \{(r, y) \in \R \times \p W \subset \widehat W \;|\; (1+f_i)(y)\geq r\}
%$$
%where $\R \times \p W$ is embedded in $\widehat W$ via the Liouville flow. 
We claim that
\begin{equation}\label{eq: ckWinfckWi}
c_k^{\GH}(W) = \inf_{i} c_k^{\GH}(W_i).
\end{equation}
Since $\alpha_i \geq \alpha$ for all $i \in \N$, we have $W \subset W_i$, and hence the monotonicity in Proposition \ref{prop: axiomaticproportyofc_i} yields
$$
c_k^{\GH}(W) \leq \inf_{i} c_k^{\GH}(W_i).
$$
Now let $W_{f'}$ be a graphical subdomain with respect to a nondegenerate perturbation $\alpha_{f'} : = f' \alpha$ for some smooth function $f': \p W \rightarrow \R_{ > 1}$. Recall that $c_k^{\GH}(W)$ is the infimum of $c_k(W_f)$ over such nondegenerate perturbations $f$. Since $\p W$ is compact and $f_i$ converges to $1$ in $C^2$-topology, it follows that there exists $i_0 \in \N$ such that $W_i \subset W_{f'}$ for $i \geq i_0$. Therefore we have, for $i \geq i_0$,
$$
c_k^{\GH}(W_i) \leq c_k^{\GH}(W_{f'}). 
$$
It follows that 
$$
\inf_i c_k^{\GH}(W_i) \leq \inf_{f > 1} c_k^{\GH}(W_f) = c_k^{\GH}(W).
$$
This completes the proof of the equality \eqref{eq: ckWinfckWi}.

Since the sequence $D_i$ is increasing and $D_i \rightarrow \infty$ as $i \rightarrow \infty$, the capacity $c_k^{\GH}(W_i) < D_i$ for sufficiently large $i$. Note also that $(W_i, \lda_i)$ is chosen via the Morse--Bott perturbation \cite[Lemma 2.3]{Bo02}  so that it is $D_i$-nondegenerate in the sense of Definition \ref{def: Tnondeg}. (Here $\lda_i : = \widehat{\lda}|_{W_i}$, which yields $\lda_i|_{\p W_i} = \alpha_i$.) Applying the Reeb orbit property in Lemma \ref{lem: ReeborbitpropertyforTnondegenerate}, it follows that $c_k^{\GH}(W_i) = \mathcal{A}(\gamma_i)$ for some Reeb orbit $\gamma_i$ of $(\p W, \alpha_i)$ with $
\CZ(\gamma_i)= 2k+n-1.$ In view of \eqref{eq: ckWinfckWi}, an Arzel\`a--Ascoli argument shows that the sequence $\gamma_i$ has a subsequence converging to a Reeb orbit $\gamma$ of $(\p W, \alpha)$ such that $c_k^{\GH}(W) = \mathcal{A}(\gamma)$. Note that $\alpha_i$ converges to $\alpha$ in $C^2$-topology, and in this case, by applying \cite[Lemma 4.10]{Mc16}, we know that the Robbin--Salamon index $\RS(\gamma)$ satisfies 
$$
\CZ(\gamma_i) = 2k+n-1 \in \left[\RS(\gamma) - \frac{1}{2}(\dim \Sigma_T  - 1), \RS(\gamma) + \frac{1}{2}(\dim \Sigma_T  - 1)\right]
$$
where $T = \mathcal{A}(\gamma)$. This completes the proof.
\end{proof}

\subsection{Proof of Theorem \ref{thm: result3} }

\begin{lemma}\label{lem: eqnoneqcapacitysame}
Let $(W, \lda)$ be a nondegenerate dynamically convex Liouville domain with vanishing symplectic homology. Then
$$
c^{\SH}(W) = c_1^{\GH}(W).
$$
\end{lemma}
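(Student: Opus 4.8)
The plan is to compare the two capacities through their description in terms of the fundamental class and the tautological exact sequences, using the Gysin sequence to pass between the non-equivariant and equivariant pictures. Recall from Remark \ref{rem: eqdefcSH} that
$$
c^{\SH}(W) = \inf \{a > 0 \;|\; \exists \, c \in \SH_{n+1}^{+, a}(W) \text{ with } \iota^{a}(c) = [W, \p W]\},
$$
while by Proposition \ref{prop: eqdefc1gh} we have
$$
c_1^{\GH}(W) = \inf \{a > 0 \;|\; j^{S^1, a}[W, \p W]^{S^1} = 0\},
$$
and, equivalently (via the equivariant tautological sequence), $c_1^{\GH}(W)$ is the infimum of $a$ such that $[W,\p W]^{S^1}$ lifts along $\iota^{S^1,a}$ to a class in $\SH_{n+1}^{S^1,+,a}(W)$. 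So the goal is to show that $[W,\p W]$ lifts along $\iota^a$ below level $a$ if and only if $[W,\p W]^{S^1}$ lifts along $\iota^{S^1,a}$ below level $a$.

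First I would exploit dynamical convexity: by Proposition \ref{prop: vanishingeSHlowdeg}, $\SH^{+,a}_k(W) = 0$ and $\SH^{S^1,+,a}_k(W) = 0$ for all $k < n+1$ and all $a$. Feeding this into the filtered Gysin exact sequence \eqref{eq: GysininSH} in degrees around $n+1$, the map $g^{+,a}: \SH^{+,a}_{n+1}(W) \to \SH^{S^1,+,a}_{n+1}(W)$ fits into
$$
\SH^{S^1,+,a}_n(W) = 0 \longrightarrow \SH^{+,a}_{n+1}(W) \xrightarrow{\ g^{+,a}\ } \SH^{S^1,+,a}_{n+1}(W) \xrightarrow{\ U\ } \SH^{S^1,+,a}_{n-1}(W) = 0,
$$
so $g^{+,a}$ is an isomorphism for every $a$ (this uses $n \geq 2$ so that $n-1 < n+1$; the case $n=1$ is trivial or excluded). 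Moreover the Gysin sequence is compatible with the tautological sequences, and under the identifications $\SH^\epsilon_*(W) \cong \Ho_{*+n}(W,\p W)$ and $\SH^{S^1,\epsilon}_*(W) \cong \Ho^{S^1}_{*+n}(W,\p W)$ the connecting map $\delta$ relates the ordinary and equivariant fundamental classes: $\delta$ (or rather the relevant map $\Ho^{S^1}(W,\p W) \to \Ho(W,\p W)$ in the Gysin sequence of the trivial $S^1$-action) sends $[W,\p W]^{S^1}$ to $[W,\p W]$, since the $S^1$-action on $W$ is trivial and $\Ho^{S^1}_*(W,\p W) = \Ho_*(W,\p W) \otimes \Ho_*(BS^1)$ in low degrees. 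Chasing the commutative ladder built from \eqref{eq: GysininSH} together with the two tautological sequences then shows: a class $c \in \SH^{+,a}_{n+1}(W)$ with $\iota^a(c) = [W,\p W]$ produces $g^{+,a}(c) \in \SH^{S^1,+,a}_{n+1}(W)$ with $\iota^{S^1,a}$-image $[W,\p W]^{S^1}$, and conversely any equivariant lift pulls back through the isomorphism $g^{+,a}$ to a non-equivariant lift. Hence the two infima coincide.

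The main obstacle I anticipate is making the diagram chase fully honest: one must check that the identification $\SH^\epsilon \cong \Ho(W,\p W)$ is compatible with the Gysin map (i.e. that under these identifications the Gysin connecting homomorphism is the standard one for the trivial $S^1$-bundle), and that the fundamental classes genuinely correspond as claimed — this is where the hypothesis $\SH_*(W) = 0$ may also be needed, to ensure via \eqref{eq: pSHandHo} and its equivariant analogue that $\SH^+_{n+1}$ and $\SH^{S^1,+}_{n+1}$ carry the fundamental class in the expected way and that there is no ambiguity from higher filtration levels. A cleaner alternative, which I would pursue if the direct chase gets unwieldy, is to note that both capacities can be characterized as the smallest action at which the image of the fundamental class under the full (unfiltered) transfer-type map dies, and then invoke the isomorphism $g^+$ between $\SH^+$ and $\SH^{S^1,+}$ in degree $n+1$ — which holds in \emph{all} filtration levels simultaneously by the above — to identify the two vanishing thresholds directly.
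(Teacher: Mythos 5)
Your proposal takes essentially the same route as the paper's proof: identify both capacities as fundamental-class vanishing thresholds via Remark~\ref{rem: eqdefcSH} and Proposition~\ref{prop: eqdefc1gh}, use dynamical convexity together with the filtered Gysin sequence to see that $g^{+}:\SH_{n+1}^{+,a}(W)\to\SH_{n+1}^{S^1,+,a}(W)$ is an isomorphism, and chase the commutative ladder of tautological and Gysin sequences, using that the Gysin map $\Ho_{2n}(W,\partial W)\to\Ho_{2n}^{S^1}(W,\partial W)$ is injective and carries $[W,\partial W]$ to $[W,\partial W]^{S^1}$. The only organizational difference is that the paper establishes $c^{\SH}(W)\geq c_1^{\GH}(W)$ for any Liouville domain with vanishing symplectic homology (by commutativity alone, no dynamical convexity), reserving the $g^{+}$-isomorphism argument for the reverse inequality, whereas you invoke the isomorphism for both directions, which also works.
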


\begin{proof}
We first prove that $c^{\SH}(W) \geq c_1^{\GH}(W)$ for Liouville domains $W$ with $\SH(W) = 0$, not necessarily dynamically convex. Consider the following commutative diagram.
\begin{equation}\label{eq: commdiag_gysin}
\begin{tikzcd} 
\SH_{n+1}^{+, a}(W) \arrow[r, "\iota^a"]  \arrow[d, "g^+"]&  \Ho_{2n}(W, \p W) \arrow[r, "j^a"] \arrow[d, "f"] & \SH_n^a(W) \arrow[d, "g"] \\
\SH_{n+1}^{S^1, + , a}(W) \arrow[r, "\iota^{S^1, a}"] & \Ho_{2n}^{S^1}(W, \p W) \arrow[r, "j^{S^1, a}"] & \SH_n^{S^1, a}(W) 
\end{tikzcd}
\end{equation}
Here, the upper horizontal maps are from the exact sequence \eqref{eq: tau_LES_Ho} and the lower horizontal maps are its equivariant counterpart, and the vertical maps are from the respective Gysin exact sequence; see Section \ref{sec: eSH}. In particular $f$ sends the fundamental class $[W, \p W]$ to the fundamental class $[W, \p W]^{S^1}$. Suppose that 
$$
j^a[W, \p W] = 0.
$$
Then by the commutativity we have
$$
j^{S^1, a}[W, \p W]^{S^1} = (j^{S^1, a} \circ f) [W, \p W] = (g \circ j^a)[W, \p W] = g(0) = 0.
$$
This implies that
$$
c^{\SH}(W) \geq c_1^{\GH}(W)
$$
as $c_1^{\GH}(W) = \inf \{a > 0 \;|\;  j^{S^1, a}[W, \p W]^{S^1} = 0\}$; see Proposition \ref{prop: eqdefc1gh}.

Conversely, we prove that $c^{\SH}(W) \leq c_1^{\GH}(W)$ assuming in addition that $W$ is dynamically convex. Consider part of the Gysin exact sequence 
$$
\SH_{n}^{S^1, +, a}(W) \rightarrow \SH_{n+1}^{+, a}(W) \xrightarrow{g^+} \SH_{n+1}^{S^1, +,  a}(W) \rightarrow \SH_{n-1}^{S^1, +,  a}(W)
$$
in positive symplectic homology. Since $W$ is dynamically convex, $\SH_{k}^{S^1, +, a}(W)$ is vanishing for $k < n+1$ by Proposition \ref{prop: vanishingeSHlowdeg}. It follows that the middle map
$$
\SH_{n+1}^{+, a}(W) \xrightarrow{g^+} \SH_{n+1}^{S^1,+, a}(W)
$$
is an isomorphism. In view of the left commutative square in the diagram \eqref{eq: commdiag_gysin}, we deduce that if there exists a class $c \in \SH_{n+1}^{S^1, + a}(W)$ with $\iota^{S^1, a}(c) = [W, \p W]^{S^1}$ for some $a > 0$, then $
f(\iota^a \circ (g^+)^{-1}(c)) = [W, \p W]^{S^1}$. Since $f$ is injective and $f([W, \p W]) = [W, \p W]^{S^1}$, we have $(\iota^a \circ (g^+)^{-1})(c) = [W, \p W]$.
%$$
%(\iota^a \circ (g^+)^{-1})(c) = (f^{-1} \circ \iota^{S^1, a})(c) = f^{-1}( [W, \p W]^{S^1}) = [W, \p W].
%$$
%
%$$
%f(\iota^a \circ (g^+)^{-1}(c)) = [W, \p W]^{S^1}
%$$
%and 
%$$
%f([W, \p W]) = [W, \p W]^{S^1}
%$$
It follows that
$$
c^{\SH} (W) \geq c_1^{\GH}(W) 
$$
as $c^{\SH}(W)$ is given by
$$
c^{\SH}(W) = \inf \{a >  0 \;|\; \exists \; c \in \SH_{n+1}^{+, a}(W) \text{ such that $\iota^{a}(c) = [W, \p W]$} \}.
$$
See Remark \ref{rem: eqdefcSH}. This completes the proof.
\end{proof}

We now give a proof of Theorem \ref{thm: result3}.

\begin{proof}[Proof of Theorem \ref{thm: result3}]
For the first assertion, suppose that $(\Sigma, \alpha)$ is Boothby--Wang of common period $T_P$. Denote $c_1^{\GH}(W) = T_1$ and $c_n^{\GH}(W) = T_n$. Notice that both $T_1$ and $T_n$ are some multiples of $T_P$ and $\Sigma_{T_1} =  \Sigma_{T_n} = \Sigma_{T_P} = \Sigma$ as the flow is Boothby--Wang.

From the property in Proposition \ref{prop: MBReebprop}, we deduce that
$$
\RS(\Sigma_{T_1}) \leq 2n \leq \RS(\Sigma_{T_n}) \leq 4n-2.
$$
On the other hand, by the dynamical convexity assumption, we know $\RS(\Sigma_{T_P}) \geq 2n$ by Corollary \ref{cor: RSlowerbound}. In view of the homogeneity of the Robbin--Salamon index of principal orbits in Remark \ref{rem: meamindexandRSindex}, we know that $\RS(\Sigma_{T_1})$ and $\RS(\Sigma_{T_n})$ are some multiples of $\RS(\Sigma_{T_P})$. It follows that $\RS(\Sigma_{T_1}) = 2n = \RS(\Sigma_{T_n})$, and hence we have
$$
c_1^{\GH}(W) = T_1 = T_P = T_n = c_n^{\GH}(W)
$$
as asserted.

For the second assertion, assume that $c_1^{\GH}(W) = c_n^{\GH}(W) = T$. We claim that $\Sigma_T = \Sigma$, which implies that the Reeb flow is periodic. From Proposition \ref{prop: MBReebprop}, we deduce that
$$
3n-1 - \frac{1}{2}(\dim \Sigma_T -1) \leq \RS(\Sigma_T) \leq n+1 + \frac{1}{2}(\dim \Sigma_T -1).
$$
It follows that $\dim \Sigma_T \geq 2n-1$, and since $\Sigma_T$ is an embedded closed submanifold of $\Sigma$ with $\dim \Sigma = 2n-1$, we have $\Sigma_T = \Sigma$ as claimed. 

We now further assume that $W$ is a nondegenerate convex domain in $\R^{2n}$ containing the origin. It is shown in \cite{AbKa22, Irie} that the symplectic homology capacity $c^{\SH}(W)$ is the contact systole i.e. the minimum value of the spectrum $\Spec(\Sigma, \alpha)$. It follows from Lemma \ref{lem: eqnoneqcapacitysame} that the first capacity $c_1^{\GH}(W) =  T_P$ takes the minimum value of $\Spec(\Sigma, \alpha)$. This implies that the Reeb flow is in fact Boothby--Wang.
\end{proof}

\subsection*{Acknowledgement} The authors would like to thank Oleg Lazarev, Otto van Koert and the anonymous referee for helpful comments. Myeonggi Kwon was supported by the National Research Foundation of Korea(NRF) grant funded by the Korea government(MSIT) (No. NRF-2021R1F1A1060118). Takahiro Oba was supported by JSPS KAKENHI Grant Numbers 20K22306, 22K13913.

\bibliographystyle{abbrv}
\bibliography{mybibfile}

\end{document}